\def\wt{\widetilde}
\def\ov{\overline}
 \def\up{\upharpoonright}
\def\cH{\mathcal H}
\def\cD{\mathcal D}  \def\cR{\mathcal R} 
\def\cP {\mathcal P}
\def\cK{\mathcal K} \def\cL{\mathcal L}  \def\cZ{\mathcal Z}
\def\cJ{\mathcal J}
\def \gH{\mathfrak H}   \def \gN{\mathfrak N}
\def \bC{\mathbb C}  
\def\bR{\mathbb R}
\def \l{\lambda}
\def \a{\alpha} \def \b{\beta}    \def \s{\sigma}
\def \t{\theta} \def\g {\gamma}
\def\d {\delta} \def\e{\varepsilon} \def\om {\omega} \def\Om {\Omega}
\def \f{\varphi}  \def \G{\Gamma} \def\D {\Delta} \def\S{\Sigma}
\def \C{\widetilde {\mathcal C}}
\def \CA{\C(\cH_0,\cH_1)}
\def \CG{\C(\mathcal H)}
\def \cd {\cdot}
\def \Ker{\text{Ker}} \def\codim{\text{codim}\,}
  \def \exl { {Ext}_{L_0}}
\def\tm{\times}
\def  \RH {\wt R (\cH_0,\cH_1)} \def \RZ {\wt R^0 (\cH_0,\cH_1)}
   \def\HH {H^n\oplus H^n}
\def \pair {\tau=\{\tau_+,\tau_-\}}
\newcommand {\LH}[1] {L'_2[#1,H ]}
  \def \RH {\wt R (\cH_0,\cH_1)}
\def \RZ {\wt R^0 (\cH_0,\cH_1)}
\def\bt{\{\cH,\G_0,\G_1\}}
\def\bta{\{\cH_0\oplus \cH_1,\Gamma _0,\Gamma _1\}}
\def \R { R (\l)}  \def\CD {\{C(\cd), D(\cd)\}}
\def\TR {TR\{\cH_0,\cH_1\}} \def\TRZ {TR^0\{\cH_0,\cH_1\}}
\def \CR {\bC\setminus\bR} \def\ml {m_\cP(\cd)}
\newtheorem{theorem}{Theorem}[section]
\newtheorem{proposition}[theorem]{Proposition}
\newtheorem{corollary}[theorem]{Corollary}
\newtheorem{lemma}[theorem]{Lemma}
\theoremstyle{definition}
\newtheorem {definition} [theorem]{Definition}
\theoremstyle{remark}
\newtheorem{remark}[theorem]{Remark}
\numberwithin{equation}{section}
\numberwithin{equation}{section}
\begin{document}
\title [Minimal spectral functions]
{ Minimal spectral functions of an ordinary differential
operator}
\author {Vadim Mogilevskii}
\address{Department of Calculus\\
Lugans'k National   University  \\
2 Oboronna, Lugans'k, 91011\\
Ukraine} \email{vim@mail.dsip.net}

\subjclass[2000]{34B05, 34B20, 34B40, 47E05}

\keywords{Differential operator, decomposing $D$-boundary triplet,  boundary
conditions, minimal spectral function, spectral multiplicity}
\begin{abstract}
Let $l[y]$ be a formally selfadjoint differential expression of an even order
on the interval $[0,b\rangle \;(b\leq \infty)$ and let $L_0$ be the
corresponding minimal operator. By using the concept of a decomposing boundary
triplet we consider the boundary problem formed by the equation $l[y]-\l
y=f\;(f\in L_2[0,b\rangle)$ and the Nevanlinna $\l$-depending boundary
conditions with constant values at the regular endpoint $0$. For such a problem
we introduce the concept of the $m$-function, which in the case of selfadjoint
decomposing boundary conditions coincides with the classical characteristic
(Titchmarsh-Weyl) function. Our method allows one to describe all minimal
spectral functions of the boundary problem, i.e., all spectral functions of the
minimally possible dimension. We also improve (in the case of intermediate
deficiency indices $n_\pm(L_0)$ and not decomposing boundary conditions) the
known estimate of the spectral multiplicity of the (exit space) selfadjoint
extension $\wt A\supset L_0$. The results of the paper are obtained for
expressions $l[y]$ with operator valued coefficients and arbitrary (equal or
unequal) deficiency indices $n_\pm(L_0)$.
\end{abstract}
\maketitle
\section{Introduction}
The main objects of the paper are differential operators generated by a
formally selfadjoint differential expression $l[y]$ of an even order $2n$ on an
interval $\D=[0,b\rangle  \;(b\leq \infty)$. We consider the expression $l[y]$
with operator valued coefficients and arbitrary (possibly unequal) deficiency
indices, but in order to simplify presentation of the main results assume that
\begin {equation}\label{1.1}
l[y]=\sum_{k=1}^n (-1)^k (p_{n-k}y^{(k)})^{(k)}+p_n y
\end{equation}
is a scalar expression with real-valued coefficients $p_k(t) \; (t\in \D)$
\cite{Nai}. Denote by $L_0$ and $L(=L_0^*)$ minimal and maximal operators
 respectively generated  by the expression \eqref{1.1} in the Hilbert
space $\gH:=L_2(\D)$ and let $\cD$ be the domain of $L$. As is known $L_0$ is a
symmetric operator with equal deficiency indices $m=n_\pm (L_0)$ and $n\leq m
\leq 2n$. Denote also by $n_b:=m-n$ the defect number of the expression
\eqref{1.1} at the point $b$ \cite{Mog09.1}.

In the present paper we develop an approach based on the concept of a
decomposing boundary triplet for a differential operator \cite{Mog09.1,Mog09.2,
Mog10}. Recall that according to \cite{Mog09.1} a decomposing boundary triplet
for $L$ is a boundary triplet $\Pi=\{\bC^n\oplus \bC^{n_b}, \G_0,\G_1\}$ in the
sense of \cite{GorGor} with the  boundary operators $\G_j:\cD\to \bC^n\oplus
\bC^{n_b},\; j\in\{0,1\}$  of the special form
\begin {equation}\label{1.2}
\G_0 y=\{y^{(2)}(0), \G'_0y\}\,(\in \bC^n\oplus \bC^{n_b}), \quad \G_1
y=\{-y^{(1)}(0), \G'_1y\}\,(\in \bC^n\oplus \bC^{n_b}).
\end{equation}
Here $y^{(j)}(0)$ are vectors of quasi-derivatives \eqref{3.2} at the point $0$
and $\G_j'y(\in \bC^{n_b}), \; j\in\{0,1\}$ are vectors of boundary values of a
function $y\in\cD$ at the singular endpoint $b$.

Next assume that $\cP=\{C_0(\l), C_1(\l)\}\;(\l\in\CR)$ is a Nevanlinna
operator pair defined by the block representations
\begin {equation}\label{1.3}
C_0(\l)=(\hat C_0\;\; C_0'(\l)):\bC^n\oplus \bC^{n_b}\to \bC^m, \;\;
C_1(\l)=(\hat C_1\;\; C_1'(\l)):\bC^n\oplus \bC^{n_b}\to \bC^m
\end{equation}
with the constant entries $\hat C_0, \; \hat C_1$ and  let $\tau=\tau(\l):=
\{\{h,h'\} :C_0(\l)h+ C_1(\l)h'=0\}$ be the corresponding Nevanlinna family of
linear relations. Denote by $\hat\cK$ the range of the operator $\hat C  =(\hat
C_0\;\; \hat C_1)$ and let
\begin {equation*}
\hat n=\dim \hat\cK=\text{rank} (\hat C_0\;\; \hat C_1), \qquad n'=m-\hat n.
\end{equation*}
Then $n\leq \hat n\leq m$ and the operator pair \eqref{1.3} admits the block
representation
\begin {gather}
C_0(\l)=\begin{pmatrix} N_0 & C'_{01}(\l)\cr 0 & C'_{02}(\l)
\end{pmatrix}:\bC^n\oplus \bC^{n_b}\to \hat\cK\oplus \hat\cK^\perp ,\label{1.4}\\
C_1(\l)=\begin{pmatrix} N_1 & C'_{11}(\l)\cr 0 &
C'_{12}(\l\end{pmatrix}):\bC^n\oplus \bC^{n_b}\to \hat\cK\oplus \hat\cK^\perp
,\label{1.4a}
\end{gather}
where $N_j$ are $(\hat n\times n )$-matrices with $\text{rk} (N_0\;\;N_1)=\hat
n $ and $C_{j1}'(\l), \; C_{j2}'(\l)\;(j\in \{0,1\})$ are respectively $(\hat
n\times n_b)$ and $(n'\times n_b)$-matrix functions. By using the boundary
operators \eqref{1.2} consider the boundary problem
\begin{gather}
l[y]-\l y=f \label{1.5}
\end{gather}
$$
 C_0(\l) \G_0 y-C_1(\l)\G_1 y =0.\leqno (*)
$$
It follows from \eqref{1.4}, \eqref{1.4a}  that the boundary condition ($*$)
can be written as two equalities
\begin{gather}
N_0 y^{(2)}(0)+N_1 y^{(1)}(0)+C_{01}'(\l)\G'_0 y- C_{11}'(\l)\G'_1
y=0,\label{1.7}\\
C_{02}'(\l)\G'_0 y- C_{12}'(\l)\G'_1 y=0,\label{1.8}
\end{gather}
which define in fact $m$ linearly independent  boundary conditions in the sense
of \cite{DunSch}.

The problem \eqref{1.5}-\eqref{1.8} is a particular case of a general
Nevanlinna type boundary problem  and hence it generates a generalized
resolvent $R(\l)=R_\tau(\l)$ and the corresponding spectral function
$F(t)=F_\tau (t) $ of the operator $L_0$ \cite{Mog10}. Moreover each
selfadjoint boundary problem is given by the boundary condition ($*$) with a
constant-valued Nevanlinna pair $\cP=\{C_0,C_1\}$, which implies that each
canonical resolvent of the operator $L_0$ is generated by the boundary problem
\eqref{1.5}-\eqref{1.8} with $C'_{j1}(\l)\equiv C'_{j1}$ and $C'_{j2}(\l)\equiv
C'_{j2}, \; j\in\{0,1\},\;\l\in\CR$. Observe also that the problem
\eqref{1.5}-\eqref{1.8} contains as a particular case a decomposing boundary
problem. Namely if (and only if) the equality $\hat n=n$ is satisfied, then
$C_{01}'(\l)= C_{11}'(\l)=0$ and the boundary conditions \eqref{1.7},
\eqref{1.8} becomes decomposing.

Next assume that $M(\cd)$ is the Weyl function of the decomposing boundary
triplet \eqref{1.2} in the sense of \cite{DM91} and let
\begin {equation}\label{1.9}
M(\l)=\begin{pmatrix}m(\l) & M_2(\l) \cr M_3(\l) & M_4(\l)
\end{pmatrix}: \bC^n\oplus \bC^{n_b}\to \bC^n\oplus \bC^{n_b}, \quad \l\in\CR
\end {equation}
be the block representation of $M(\l)$. Moreover let $\Om (\l)=\Om_\tau(\l)$ be
the Shtraus characteristic matrix of the generalized resolvent $R(\l)=R_\tau
(\l)$ \cite{Sht57}. Then $\Om_\tau(\l)$ is defined immediately in terms of a
Nevanlinna boundary parameter $\tau$ by the equalities
\begin {gather}
\wt\Om_{\tau }(\l)=\begin{pmatrix} M(\l)-M(\l)(\tau(\l) +M(\l))^{-1}M(\l) &
-\tfrac 1 2 I +M(\l) (\tau(\l) +M(\l))^{-1} \cr -\tfrac 1 2 I
+(\tau(\l) +M(\l))^{-1}M(\l) & -(\tau(\l) +M(\l))^{-1}\end{pmatrix},\label{1.9a}\\
\Om_\tau (\l)= P_{\bC^n\oplus \bC^n}\,\wt \Om_{\tau }(\l)\up \bC^n\oplus \bC^n,
\;\; \l\in\CR.\nonumber
\end{gather}
(see \cite{Mog10}).For a given operator pair \eqref{1.4}, \eqref{1.4a} consider
also the operator function
\begin{gather}\label{1.10}
\Om_{\tau,W'}(\l)=(W')^{-1}\Om_\tau(\l)(W')^{-1*}, \quad \l\in\CR,
\end{gather}
where $ W'=\begin{pmatrix} -N_0^* & * \cr N_1^* & * \end{pmatrix} $ is an
invertible $(2n\times 2n)$-matrix (the form of the entries $*$ does not
matter). We show in the paper that  the operator function \eqref{1.10} is of
the form
\begin {equation}\label{1.11}
\Om_{\tau,W'}(\l)=\begin{pmatrix} m_\cP(\l) & C^* \cr C & 0
\end{pmatrix}:\hat\cK\oplus \hat\cK^\perp \to \hat\cK\oplus \hat\cK^\perp,
\quad \l\in\CR,
\end {equation}
where $C$ is a  constant operator. The equality \eqref{1.11} generates the
uniformly strict Nevanlinna operator function $m_\cP(\cd)$,  which we call an
$m$-function of the boundary problem \eqref{1.5}-\eqref{1.8}. This function can
be also explicitly   defined in terms of the boundary conditions \eqref{1.7},
\eqref{1.8} (see Theorem \ref{th4.11}, 3)). Moreover in the case of selfadjoint
decomposing boundary conditions  the function $m_\cP(\cd)$ coincides with the
classical characteristic (Titchmarsh-Weyl) function \cite{Nai}.

It turns out that the characteristic matrix  $\Om_\tau(\cd)$ and the
$m$-function $m_\cP(\cd)$ are connected by
\begin {equation*}
m_\cP(\l)=\hat N^*\Om_\tau(\l)\hat N+\hat C, \qquad \hat C=\hat C^*,
\end {equation*}
where $\hat N$ is the right inverse operator for $N'=(-N_0\;\;N_1)$. This
implies that $m_\cP(\cd)$ is the uniformly strict part of the Nevanlinna
function  $\Om_\tau(\cd)$ and the function $\Om_\tau(\cd)$ is uniformly strict
 if and only if $m=\hat n=2n$ and the
$(2n\times 2n)$-matrix $(N_0\;\; N_1)$ is invertible.

In the final part of the paper we consider some questions of the eigenfunction
expansion. Namely let $\f(t,\l)=(\f_1(t,\l)\;\;\f_2(t,\l)\;\dots\; \f_d(t,\l))$
be a system of $d=d_\f$ linearly independent solutions of the equation $l[y]-\l
y=0$ with the constant initial data $\f^{(j)}(0,\l)\equiv \f_j, \; j\in
\{0,1\}$. Recall that a ($d \times d$)-matrix distribution $\S
(s)=\S_{\tau,\f}(s)\;(s\in\bR)$ is called a spectral function of the boundary
problem \eqref{1.5}-\eqref{1.8} corresponding to the solution $\f(\cd,\l)$ if
for each  function $f\in\gH$ with compact support the Fourier transform
\begin {equation*}
 g_f(s)=\int_0^b \f^\top(t,s) f(t)\,dt.
\end{equation*}
satisfies the equality
\begin {equation*}
((F_\tau(\b)-F_\tau(\a))f,f)_\gH=\int_{[\a,\b)} (d\S_{\tau,\f}(s)g_f(s),
g_f(s)), \quad [\a,\b)\subset\bR.
\end{equation*}
(here $F_\tau(\cd)$ is the spectral function of $L_0$). As is known
\cite{DunSch,Nai,Sht57} in the case $d_\f=2n$ there exists a unique spectral
function $\S_{\tau,\f}(\cd)$ of the problem \eqref{1.5}-\eqref{1.8}. At the
same time for simplification of calculations it is important to make $d_\f$ as
small as possible \cite[ch. 13.5]{DunSch}. Therefore the natural problem seems
to be a description of all spectral functions $\S_{\tau,\f}(\cd)$ with the
minimally possible value of $d_\f$ (we denote this value by $d_{min}$ and we
call the corresponding spectral function minimal). It turns out that the
complete solution of this problem is based on the introduced concept of the
$m$-function $m_\cP(\cd)$. Namely the following theorem holds.
\begin{theorem}\label{th1.1}
Let $\cP=\{C_0(\l),C_1(\l)\}$ be a Nevanlinna pair \eqref{1.4}, \eqref{1.4a}
and let $\f_N(t,\l)=(\f_1(t,\l)\;\;\f_2(t,\l)\;\dots\; \f_{\hat n}(t,\l))$ be
the $\hat n $-component linearly independent solution of the equation $l[y]-\l
y=0$ with the initial data $\f_N^{(1)}(0,\l)=-N_0^*, \;
\f_N^{(2)}(0,\l)=N_1^*$. Then:

1) there exists the unique $(\hat n\times \hat n)$-spectral function
$\S_{\cP,N}(s)$ of the problem \eqref{1.5}-\eqref{1.8} corresponding to
$\f_N(\cd,\l)$ and this function is calculated by means of the Stieltjes
formula \eqref{5.20} for the $m$-function $m_\cP(\cd)$;

2) $d_{min}=\hat n$ and the set of all minimal spectral functions
$\S_{min}(\cd)$ is given by
\begin {equation*}
\S_{min}(s)= X^* \S_{\cP,N}(s)X,
\end{equation*}
where  $X$ is an invertible $(\hat n\times \hat n)$-matrix.
\end{theorem}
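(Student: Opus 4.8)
The plan is to reduce the assertion to the known correspondence between spectral functions and the Shtraus characteristic matrix $\Om_\tau(\cd)$, and then to read off everything from the block structure \eqref{1.11}. Recall from the general theory of generalized resolvents (see \cite{Mog10,Sht57,DunSch}) that for a $d$-component solution $\f(\cd,\l)$ of $l[y]-\l y=0$ the spectral function $\S_{\tau,\f}(\cd)$, when unique, is recovered by a Stieltjes inversion of a reduced characteristic function obtained from $\Om_\tau(\cd)$ by the congruence fixed by the initial data of $\f$; for the full fundamental system ($d=2n$) this reduced function is $\Om_\tau(\cd)$ itself. The first step is therefore to compute the reduced characteristic function for $\f_N$, whose initial data is the pair $\{-N_0^*,N_1^*\}$, and to identify it with $m_\cP(\cd)$ modulo a real constant. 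This is precisely the content of the identity $m_\cP(\l)=\hat N^*\Om_\tau(\l)\hat N+\hat C$ with $\hat C=\hat C^*$: the initial data of $\f_N$ is (up to the normalization built into the right inverse $\hat N$ of $N'=(-N_0\;\;N_1)$) the very congruence defining $m_\cP(\cd)$.

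For part 1), once the reduced characteristic function of $\f_N$ agrees with $m_\cP(\cd)$ up to the Hermitian constant $\hat C$, existence and uniqueness of $\S_{\cP,N}(\cd)$ follow from the uniform strictness of $m_\cP(\cd)$. The constant $\hat C=\hat C^*$ carries no imaginary part and so disappears from the Stieltjes inversion, while uniform strictness gives $\text{Im}\,m_\cP(\l)>0$, so the associated operator measure is genuinely $\hat n$-dimensional and the Stieltjes formula for $m_\cP(\cd)$ determines $\S_{\cP,N}(\cd)$ unambiguously. The Parseval identity connecting $F_\tau(\cd)$ with $d\S_{\cP,N}(\cd)$ through the Fourier transform $g_f(s)=\int_0^b\f_N^\top(t,s)f(t)\,dt$ is then inherited from the corresponding identity for the full system via the congruence relating $\f_N$ to that system.

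For part 2) the decisive input is again \eqref{1.11}: after conjugation by $W'$ the characteristic matrix takes the form $\left(\begin{smallmatrix}m_\cP(\l)&C^*\cr C&0\end{smallmatrix}\right)$ on $\hat\cK\oplus\hat\cK^\perp$ with $C$ a constant operator, so that $\text{Im}\,\Om_{\tau,W'}(\l)=\left(\begin{smallmatrix}\text{Im}\,m_\cP(\l)&0\cr0&0\end{smallmatrix}\right)$. Hence the operator measure carried by $\Om_\tau(\cd)$ lives entirely on the $\hat n$-dimensional subspace $\hat\cK$ and is nondegenerate there. This forces $d_{min}\geq\hat n$: a solution $\f$ with fewer than $\hat n$ components yields a reduced characteristic function whose Stieltjes measure has rank strictly below $\hat n$ and therefore cannot reproduce $F_\tau(\cd)$ on the relevant subspace. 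Since $\f_N$ attains $d_\f=\hat n$, I conclude $d_{min}=\hat n$.

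It remains to describe all minimal spectral functions. If $\f'$ is any $\hat n$-component solution producing a spectral function $\S'(\cd)$, the same analysis shows that its initial data must span exactly the $\hat\cK$-directions, for otherwise the reduced characteristic function degenerates and $\S'(\cd)$ fails the Parseval identity against $F_\tau(\cd)$. Consequently the initial data of $\f'$ and of $\f_N$ differ by an invertible $(\hat n\times\hat n)$-matrix $X$, i.e.\ $\f'=\f_N\,X$, and the transformation law for the Fourier transform gives $\S'(s)=X^*\S_{\cP,N}(s)X$; conversely each such congruence is realized. I expect the main obstacle to be the lower bound $d_{min}\geq\hat n$: one must establish rigorously that no solution with fewer than $\hat n$ components can satisfy the Parseval identity against $F_\tau(\cd)$, tracking how the rank of the reduced characteristic function controls the dimension of its Stieltjes measure and invoking the uniform strictness of $m_\cP(\cd)$.
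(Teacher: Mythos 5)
Your overall strategy is the same as the paper's: you extend the given solution to a full fundamental system, invoke the uniqueness of the full-system spectral function together with the block form \eqref{1.11} (i.e.\ \eqref{4.50}), and obtain the description of all minimal spectral functions by a congruence with an invertible $X$. Part 1) and the parameterization in part 2) are in substance the paper's Theorem \ref{th5.11} and the second half of the proof of Theorem \ref{th5.14} (via Theorem \ref{th5.7} and Lemma \ref{lem5.10}), although in part 1) the uniqueness comes from Lemma \ref{lem5.10} rather than from uniform strictness of $m_\cP(\cd)$.

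However, there is a genuine gap exactly where you anticipated it, and the tool you propose to close it --- uniform strictness of $m_\cP(\cd)$ --- is not sufficient. Your lower bound $d_{min}\ge \hat n$ rests on the claim that the Stieltjes measure of the characteristic matrix (equivalently, of $m_\cP(\cd)$ on $\hat\cK$) is nondegenerate, so that a spectral function supported on a subspace of dimension $<\hat n$ cannot exist. But for a Nevanlinna function the kernel of its spectral measure and the kernel of its imaginary part differ by the linear term $B\l$, $B\ge 0$, of the integral representation: one has $Im \,\Om(\l)=B\, Im\,\l+\int \tfrac{Im\,\l}{|t-\l|^{2}}\,d\S(t)$, so $\S(s)h\equiv 0$ does not force $h\in\Ker\, Im\,\Om(\l)$. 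Uniform strictness does not rule this out: the scalar function $m(\l)=\l$ is uniformly strict while its Stieltjes measure is identically zero. What the paper uses instead is the asymptotic property \eqref{3.37}, $s\text{-}\lim_{y\to\infty}\Om_\tau(iy)/y=0$ (Proposition \ref{pr3.6}, which rests on the relations \eqref{2.20}, \eqref{2.21}); this kills the linear term and yields the key identity \eqref{5.23}, namely $\Ker\, Im\,\Om_{\tau,W}(\l)=\{\wt h:\S_{\tau,W}(s)\wt h=0,\; s\in\bR\}$. With \eqref{5.23} in hand, the block form \eqref{5.19} of the extended spectral function gives $\wt\cK^{\perp}\subset\Ker\, Im\,\Om_{\tau,W}(\l)$, and the congruence with $\Om_{\tau,W'}(\l)$, whose $Im$-kernel is exactly $\hat\cK^{\perp}$ by \eqref{4.50}, then gives $\dim\hat\cK\le\dim\wt\cK$. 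Without \eqref{3.37} your rank-counting argument cannot be completed, so you should add this asymptotic estimate (or an equivalent admissibility-type argument) as an explicit ingredient.
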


Moreover we show that for a fixed pair $N=(N_0\; \; N_1)$ the set of all
spectral functions $\S_{\cP,N}(s)$ is parameterized by the Stieltjes formula
\eqref{5.20} and the following equality
\begin {equation}\label{1.12}
m_\cP(\l)=T_{N,0}(\l)+T_{N}(\l)(C_0(\l)-C_1(\l)M(\l))^{-1}C_1(\l)T_{N}^*(\ov\l),
\quad \l\in\CR,
\end{equation}
which is similar to the known Krein formula for resolvents (see for instance
\cite{DM91}). In formula \eqref{1.12} $M(\l)$ is the Weyl function \eqref{1.9}
and $T_{N,0}(\l),\;T_{N}(\l)$ are the matrix functions defined by means of
$M(\l)$ and the pair $N$. The role of a parameter in \eqref{1.12} is played by
a Nevanlinna pair $\cP=\{C_0(\l),C_1(\l)\}$ given by \eqref{1.4}, \eqref{1.4a}
with fixed $N_0,\;  N_1$ and all possible $C_{ij}'(\l)$. Note in this
connection that for a decomposing boundary problem formula \eqref{1.12} leads
to similar one  from our paper \cite{Mog07}. Moreover \eqref{1.12} implies the
known description of all Titchmarsh - Weyl functions $m(\cd)$ obtained for
quasi-regular expressions $l[y]$ by Fulton \cite{Ful77} and Khol'kin \cite
{Hol85,HolRof} (we are going to touch upon these questions elsewhere).

Finally by using Theorem \ref{th1.1} we prove the inequality $sm (\wt A)\leq
\hat n$, where $sm (\wt A)$ is the spectral multiplicity of the (exit space)
selfadjoint extension $\wt A\supset L_0$ given by the boundary conditions
\eqref{1.7}, \eqref{1.8}. This result improves the known estimate $sm (\wt
A)\leq m$ implied by simplicity of the operator $L_0$. In this connection note
that in the case $\D=[0,b]$ one can put in \eqref{1.7}, \eqref{1.8}
$\G_0'y=y^{(2)}(b), \; \G_1'y=y^{(1)}(b)$, which implies that the multiplicity
of each eigenvalue of the canonical extension $\wt A=\wt A^*$  does not exceed
$\hat n(=\text{rk}(N_0\;\;N_1))$. Hence in the case $\D=[0,b]$ the estimate $sm
(\wt A)\leq \hat n$ (for the canonical extension $\wt A$) is immediate from
\eqref{1.7}, \eqref{1.8} and discreteness of spectrum of  $\wt A$. Meanwhile,
such an estimate dose not seem to be so obvious in the case of intermediate
deficiency indices $n<m<2n$ and not decomposing boundary conditions.
\section{Preliminaries}
\subsection{Notations}
The following notations will be used throughout the paper: $\gH$, $\cH$ denote
Hilbert spaces; $[\cH_1,\cH_2]$ is the set of all bounded linear operators
defined on $\cH_1$ with values in $\cH_2$; $[\cH]:=[\cH,\cH]$; $P_\cL$ is the
orthogonal projector in $\gH$ onto the subspace $\cL\subset\gH$;
$\bC_+\,(\bC_-)$ is the upper (lower) half-plain of the complex plain.

Recall that a closed linear relation from $\cH_0$ to $\cH_1$ is a closed
subspace in $\cH_0 \oplus \cH_1$. The set of all closed linear relations from
$\cH_0$ to $\cH_1$ (from $\cH$ to $\cH$) will be denoted by  $\C (\cH_0,\cH_1)$
($\C(\cH)$). A closed linear operator $T$ from $\cH_0$ to $\cH_1$  is
identified  with its graph $\text {gr} T\in\CA$.

For a  relation $T \in \C (\cH_0,\cH_1)$ we denote by $\cD(T),\,\cR (T)$ and
$\text {Ker}T$ the domain,  range and the kernel respectively. Moreover
$T^{-1}(\in \C (\cH_1, \cH_0))$ and $T^*(\in \C (\cH_1, \cH_0))$ stands for the
inverse and adjoint  relations.

In the case $T\in\CA$ we write: $0\in \rho (T)$\ \ if\ \ $\Ker T=\{0\}$\ and\
$\cR (T)=\cH_1$, or equivalently if $T^{-1}\in [\cH_1,\cH_0]$; $0\in \hat\rho
(T)$\ \ if\ \ $\Ker T=\{0\}$\ and\ $\cR (T)$ is closed. For a linear relation
$T\in \C(\cH)$ we denote by $\rho (T)=\{\l \in \bC:\ 0\in \rho (T-\l)\}$\ and
$\hat\rho (T)=\{\l \in \bC:\ 0\in \hat\rho (T-\l)\}$ the resolvent set and the
set of regular type points of $T$ respectively.
\subsection{Holomorphic operator pairs}
Recall that a holomorphic operator function $\Phi (\cd):\bC\setminus \bR \to
[\cH]$ is called a Nevanlinna function if $Im \,\l \cd Im\, \Phi (\l)\geq 0$
and $\Phi^*(\l)=\Phi (\ov\l),\; \l\in\bC\setminus \bR$. Moreover the Nevanlinna
function $\Phi(\cd)$ is said to be uniformly  strict if $0\in\rho (Im\,
\Phi(\l))$.

Next assume that  $\Lambda$ is an open set in $\bC$,  $\cK,\cH_0,\cH_1$  are
Hilbert spaces and  $C_j(\cd):\Lambda \to [\cH_j,\cK], \; j\in \{0,1\}$ is a
pair of holomorphic operator functions (briefly a holomorphic pair). In what
follows we identify such a pair with a holomorphic operator function
\begin{equation}\label{2.5}
C(\l)=(C_0(\l)\;\;\; C_1(\l)): \cH_0\oplus\cH_1 \to\cK, \quad \l\in \Lambda.
\end{equation}
A pair \eqref{2.5}  will be called admissible if $\cR (C(\l))=\cK$ for all
$\l\in \Lambda$. In the sequel  all pairs \eqref{2.5}  are admissible unless
otherwise stated.
\begin{definition}\label{def2.0}
Two holomorphic pairs $C(\cd):\Lambda\to [\cH_0\oplus \cH_1,\cK]$ and
$C'(\cd):\Lambda\to [\cH_0\oplus \cH_1,\cK']$ are said to be equivalent if $C'
(\l)=\f (\l) C (\l), \; \l\in \Lambda$ with a holomorphic isomorphism $\f(\cd):
\Lambda\to [\cK, \cK']$.
\end{definition}
Clearly, the set of all holomorphic pairs \eqref{2.5} falls into
nonintersecting classes of equivalent pairs. Moreover such a class can be
identified with a function $\tau (\cd):\Lambda\to \CA$ given for all $\l\in
\Lambda$ by
\begin {equation}\label{2.6}
\tau (\l)=\{(C_0(\l), C_1(\l));\cK\}:=\{\{h_0,h_1\}\in \cH_0\oplus\cH_1:
C_0(\l)h_0+C_1(\l)h_1=0\}.
\end{equation}

In what follows we suppose that $\cH_0$ is a Hilbert space, $\cH_1$ is a
subspace in $\cH_0$, $\cH_2:=\cH_0\ominus\cH_1$ and  $P_j$ is the
orthoprojector in $\cH_0$ onto $\cH_j,\; j\in\{1,2\}$. With each linear
relation $\t\in\CA$ we associate a $\times$-adjoint linear relation
$\t^\times\in\CA$ defined as the set of all $\{k_0,k_1\}\in \cH_0\oplus\cH_1$
such that
\begin{equation*}
(k_1,h_0)-(k_0,h_1)+i(P_2 k_0,P_2 h_0)=0,\quad \{h_0,h_1\}\in\t.
\end{equation*}
Clearly, in the case $\cH_0=\cH_1=:\cH$ the equality $\t^\tm=\t^*$ is valid.

Next assume that $\cK_0$ is an auxiliary Hilbert space, $\cK_1$ is a subspace
in $\cK_0$ and
\begin{align}
C(\l)=(C_0(\l)\;\;C_1(\l)):\cH_0\oplus\cH_1\to\cK_0,
\;\;\l\in\bC_+\label{2.7}\\
D(\l)=(D_0(\l)\;\;D_1(\l)):\cH_0\oplus\cH_1\to\cK_1, \;\;\l\in\bC_- \label{2.8}
\end{align}
are holomorphic operator pairs with the block-matrix representations
\begin{align}
C_0(\l)=(C_{01}(\l)\;\;C_{02}(\l)):\cH_1\oplus\cH_2\to\cK_0 ,\;\label{2.8a}\\
D_0(\l)=(D_{01}(\l)\;\;D_{02}(\l)):\cH_1\oplus\cH_2\to\cK_1.\label{2.8b}
\end{align}
\begin{definition}\label{def2.1}
A Nevanlinna collection of  holomorphic operator pairs (briefly a Nevanlinna
collection) is a totality $\{C(\cd), D(\cd)\}$ of holomorphic pairs
\eqref{2.7}, \eqref{2.8} satisfying
\begin{gather}
2\,Im(C_{1}(\l)C_{01}^*(\l))+C_{02}(\l) C_{02}^*(\l) \geq 0, \;\;\; 0\in\rho
(C_0(\l)-iC_{1}(\l)P_1), \;\;
\l\in\bC_+\label{2.9}\\
2\,Im(D_1(\l)D_{01}^*(\l))+D_{02}(\l)D_{02}^*(\l)\leq 0, \;\;\; 0\in\rho
(D_{01}(\l)+iD_1(\l)), \;\;
\l\in\bC_-\label{2.10}\\
C_1(\l)D_{01}^*(\ov\l)-C_{01}(\l)D_{1}^*(\ov\l) +iC_{02}(\l)D_{02}^* (\ov\l)=0,
\;\;\; \l\in\bC_+.\label{2.11}
\end{gather}

A Nevanlinna collection \eqref{2.7}, \eqref{2.8} is said to be  constant if
$\cK_0=\cK_1=:\cK$ and $C_j(\l)=D_j(z) \equiv C_j, \; j\in \{0,1\}$ for all
$\l\in\bC_+,\; z\in\bC_- $.
\end{definition}
Clearly, a constant Nevanlinna collection can be regarded as an operator pair
\begin{equation}\label{2.12}
C=(C_0 \;\;\; C_1):\cH_0\oplus\cH_1\to\cK
\end{equation}
with the block-matrix representation $C_0=(C_{01}\;\;C_{02}): \cH_1\oplus \cH_2
\to\cK $ satisfying
\begin {equation}\label{2.12a}
2\,Im(C_{1}C_{01}^*)+C_{02}C_{02}^*=0, \quad 0\in \rho (C_0-iC_{1} P_1), \quad
0\in \rho (C_{01}+iC_{1}).
\end{equation}
This and Proposition 3.4 in \cite{Mog06.1} imply that the equality
\begin{equation}\label{2.13}
\t=\{(C_0,C_1);\cK\} :=\{\{h_0,h_1\}\in \cH_0\oplus\cH_1: C_0 h_0+C_1 h_1=0\}
\end{equation}
define a linear relation $\t\in \CA$ such that $(-\t)^\tm=- \t$. Moreover a
constant Nevanlinna collection exists if and only if $\dim\cH_1=
\dim\cH_0(=\dim\cK)$.
\begin{definition}\label{def2.2}
A collection $\pair$ of two functions $\tau_+(\cd):\bC_+\to \CA$ and
$\tau_-(\cd):\bC_-\to \CA$ is said to be of  the class $\RH$ if for all
$\l\in\bC_+$ and $z\in\bC_-$ it admits the representation
\begin {equation}\label{2.14}
\tau_+(\l)=\{(C_0(\l),C_1(\l));\cK_0\},\quad
\tau_-(z)=\{(D_0(z),D_1(z));\cK_1\}
\end{equation}
with a Nevanlinna collection $\{C(\cd), D(\cd)\}$ (see \eqref{2.6}).

A collection $\pair\in\RH$ belongs to the class $\RZ$ if it admits the
representation $\tau_\pm(\l)=\{(C_0,C_1); \cK\}=\t, \; \l\in\bC_\pm$ with a
constant Nevanlinna collection (operator pair) \eqref{2.12}.
\end{definition}
It follows from Definition \ref{def2.2} that a collection $\pair\in\RH$ can be
regarded as a collection of two equivalence classes of holomorphic pairs
\eqref{2.7} and \eqref{2.8} satisfying \eqref{2.9}--\eqref{2.11}. Moreover
according to \cite{Mog06.1}
\begin {equation}\label{2.14a}
Im \,\l\cd(2Im (h_1,h_0)-||P_2 h_0||^2)\geq 0, \quad \{h_0,h_1\}\in \tau_\pm
(\l)
\end{equation}
and $-\tau_+(\l)=(-\tau_- (\ov\l))^\tm,\;\l\in\bC_+$ for any collection
$\pair\in\RH$.
\begin{remark}\label{rem2.3}
1)Clearly a Nevanlinna collection \eqref{2.7}, \eqref{2.8} satisfies the
equalities
\begin {equation}\label{2.14b}
\dim \cH_0=\dim\cK_0, \quad \;\dim \cH_1=\dim\cK_1.
\end{equation}
Therefore   the representation \eqref{2.14} with $\cK_0=\cK_1=:\cK$ is possible
if and only if $\dim\cH_1=\dim\cH_0$, in which case the corresponding
Nevanlinna collection \eqref{2.7}, \eqref{2.8} can be regarded as the unique
holomorphic operator pair defined on $\bC_+\cup\bC_-$.

2) In the case $\cH_1=\cH_0=:\cH$ the class $\wt R(\cH):=\wt R (\cH,\cH)$
coincides with the known class of Nevanlinna functions
$\tau(\cd):\bC\setminus\bR\to\CG$ (see for instance \cite{DHM09}) and
\eqref{2.14} takes the form
\begin {equation}\label{2.15}
\tau(\l)=\{(C_0(\l),C_1(\l));\cK\}, \quad \l\in\bC\setminus\bR,
\end{equation}
where $C(\l)=(C_0(\l)\;\;\;C_1(\l)):\cH\oplus\cH\to \cK $ is a holomorphic
Nevanlinna pair. Moreover a constant Nevanlinna pair can be identified by means
of \eqref{2.12} and \eqref{2.13} with  a selfadjoint linear relation (operator
pair) $ \t=\t^*\in \C (\cH)$.
\end{remark}
\subsection{Boundary triplet and the Weyl function}
Let  $A$ be a closed densely defined symmetric  operator in $\gH$ with the
deficiency indices $n_\pm (A):=\dim \gN_\l(A), \; \l\in\bC_\pm$.
\begin{definition}\cite{Mog06.2}\label{def2.4}
A collection $\Pi=\bta$, where $\cH_0$ is a Hilbert space, $\cH_1$ is a
subspace in $\cH_0$ and $\G_j:\cD (A^*)\to \cH_j,\; j\in\{0,1\}$ are linear
maps , is called a $D$-boundary triplet (or briefly a $D$-triplet) for $A^*$,
if the map $\G=(\G_0\;\;\G_1)^\top :\cD (A^*)\to \cH_0\oplus\cH_1 $ is
surjective  and the following Green's identity holds
\begin {equation*}
(A^*f,g)-(f,A^*g)=(\G_1 f,\G_0 g)- (\G_0 f,\G_1 g)+i(P_2\G_0 f,P_2\G_0 g),
\quad f,g \in \cD (A^*)
\end{equation*}
(here as before $P_2$ is the orthoprojector in $\cH_0$ onto
$\cH_2=\cH_0\ominus\cH_1$).
\end{definition}
As was shown in \cite{Mog06.2} $$\dim \cH_1=n_-(A)\leq n_+(A)=\dim \cH_0$$ for
each $D$-triplet $\{\cH_0\oplus\cH_1,\G_0,\G_1\}$.  Moreover the equalities
\begin {equation}\label{2.16}
\cD (A_0):=\Ker \G_0=\{f\in\cD (A^*):\G_0 f=0\}, \qquad A_0=A^* \up \cD (A_0)
\end{equation}
define the maximal symmetric extension $A_0$ of $A$ with $n_-(A_0)=0$.

It turns out that for every $\l\in\bC_+\;(z\in\bC_-)$ the map $\G_0\up \gN_\l
(A)\;(P_1\G_0\up \gN_z (A))$  is an isomorphism. This makes it possible to
introduce the operator functions ($\g$-fields) $\g_+(\cdot):\Bbb
C_+\to[\cH_0,\gH], \; \; \g_-(\cdot):\Bbb C_-\to[\cH_1,\gH]$ and the Weyl
functions $M_+(\cdot):\bC_+\to [\cH_0,\cH_1], \;\; M_-(\cdot):\bC_-\to
[\cH_1,\cH_0]$ by
\begin{gather}
\g_+ (\l)=(\G_0\up\gN_\l (A))^{-1}, \;\;\;\l\in\Bbb C_+;\quad
\g_- (z)=(P_1\G_0\up\gN_z (A))^{-1}, \;\;\; z\in\Bbb C_-,\label{2.17}\\
\G_1 \up \gN_\l (A)=M_+(\l)\G_0 \up \gN_\l (A),\quad \l\in\bC_+,\label{2.18}
\\
(\G_1+iP_2\G_0)\up \gN_z (A)=M_-(z)P_1\G_0 \up \gN_z (A),\quad z\in
\bC_-.\label{2.19}
\end{gather}
According to \cite{Mog06.2} all functions $\g_\pm$ and $M_\pm $ are holomorphic
on their domains and $M_+^*(\l)=M_-(\ov \l), \;\l\in\bC_+$. Moreover the block
matrix representations
\begin{gather}
M_+(\l)=(M(\l)\;\;N_+(\l)):\cH_1\oplus\cH_2\to \cH_1,\quad \l\in\bC_+
\label{2.19a}\\
M_-(z)=(M(z)\;\;N_-(z))^\top :\cH_1\to \cH_1\oplus\cH_2,\quad
z\in\bC_-\label{2.19b}
\end{gather}
generate the uniformly strict Nevanlinna function $M(\cd):\bC\setminus\bR\to
[\cH_1]$.
\begin{proposition}\label{pr2.4a}
Let $A$ be a densely defined symmetric operator in $\gH$, let $\Pi=\bta$ be a
$D$-triplet for $A^*$ and let $M_+(\cd)$ be the corresponding Weyl function.
Then for each collection $\pair\in\RH$ the following equalities hold
\begin{gather}
s-\lim_{y\to +\infty} P_1(\tau_+(i\,y)+M_+(i\,y))^{-1}/y=0,\label{2.20}\\
s-\lim_{y\to +\infty}(M(i\,y)-M_+(i\,y)
(\tau_+(i\,y)+M_+(i\,y))^{-1}M(i\,y))/y=0.\label{2.21}
\end{gather}
\end{proposition}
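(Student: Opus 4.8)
The plan is to reduce both limits to a single ``master inequality'' obtained by combining Green's identity for the $D$-triplet with the Nevanlinna inequality \eqref{2.14a}, and then to extract the asymptotics from the growth of the uniformly strict Weyl function $M(\cd)$. First I would record the basic identity for the Weyl function of a $D$-triplet. Applying Green's identity (Definition \ref{def2.4}) to $f=g=\g_+(\l)h_0\in\gN_\l(A)$, for which $\G_0 f=h_0$ and $\G_1 f=M_+(\l)h_0$ by \eqref{2.17}, \eqref{2.18}, gives for $\l\in\bC_+$ and $h_0\in\cH_0$
\begin{equation*}
\mathrm{Im}(M_+(\l)h_0,h_0)=\mathrm{Im}\,\l\,\|\g_+(\l)h_0\|^2-\tfrac12\|P_2 h_0\|^2 .
\end{equation*}
The correction $-\tfrac12\|P_2h_0\|^2$ is exactly the new feature of a $D$-triplet, coming from the term $i(P_2\G_0 f,P_2\G_0 g)$ in Green's identity, and it must be tracked throughout.

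Next, fix $h\in\cH_1$ and $y>0$, and put $u_y:=(\tau_+(iy)+M_+(iy))^{-1}h\in\cH_0$; the bounded invertibility of $\tau_+(iy)+M_+(iy)$ on $\bC_+$ for a collection of class $\RH$ is part of the theory of this class and may be taken as known. By the definition of the sum of a relation and an operator, $\{u_y,\;h-M_+(iy)u_y\}\in\tau_+(iy)$. Inserting this pair into \eqref{2.14a} (with $\mathrm{Im}\,(iy)=y>0$) and substituting the identity above with $h_0=u_y$, the two occurrences of $\|P_2 u_y\|^2$ cancel and one is left with the master inequality
\begin{equation*}
0\le y\,\|\g_+(iy)u_y\|^2\le \mathrm{Im}(h,u_y)\le \|h\|\,\|u_y\| .
\end{equation*}

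Finally I would convert this quadratic-form inequality into the required strong limits. Uniform strictness of $M(\cd)$ yields, through its integral representation, a bound of the type $\mathrm{Im}\,M(iy)\ge(\e_0/y)I_{\cH_1}$ for $y\ge1$, equivalently $\|\g_+(iy)v\|^2\ge(\e_0/y^2)\|v\|^2$ for $v\in\cH_1$; together with the master inequality and the control of the $\cH_2$-component furnished directly by \eqref{2.14a}, namely $\|P_2 u_y\|^2\le 2\,\mathrm{Im}(h-M_+(iy)u_y,u_y)$, this gives $\|u_y\|=o(y)$ and hence \eqref{2.20}. For \eqref{2.21} one repeats the argument with $h$ replaced by $M(iy)h'$, so that the quantity appearing in \eqref{2.21} is precisely $(M(iy)h'-M_+(iy)u_y)/y$, and combines the bound on $u_y$ with the Nevanlinna asymptotics $s\text{-}\lim_{y\to\infty}M_+(iy)/y=0$ (absence of a linear term in the Weyl function) to conclude.

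The step I expect to be the main obstacle is this last passage from the scalar form inequality to genuine strong vector convergence. Because the deficiency of $A$ may be infinite, norm estimates on $M_+(iy)$ and $\g_+(iy)$ are unavailable and must be replaced by strong ones, and the asymmetry encoded in $\cH_2$ (the off-diagonal blocks $N_\pm(\cd)$ of $M_+$ and the term $P_2 u_y$) has to be disentangled from the uniformly strict $\cH_1$-part. Handling the cross terms between $\g_+(iy)P_1 u_y$ and $\g_+(iy)P_2 u_y$ carefully is where the real work lies.
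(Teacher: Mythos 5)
Your first two steps are correct and cleanly executed: Green's identity of Definition \ref{def2.4} applied to $f=g=\g_+(\l)h_0$ does give $\mathrm{Im}(M_+(\l)h_0,h_0)=\mathrm{Im}\,\l\,\|\g_+(\l)h_0\|^2-\tfrac12\|P_2h_0\|^2$, and inserting the pair $\{u_y,\,h-M_+(iy)u_y\}\in\tau_+(iy)$ into \eqref{2.14a} does yield the master inequality $y\,\|\g_+(iy)u_y\|^2\le \mathrm{Im}(h,u_y)\le\|h\|\,\|u_y\|$, with the $\|P_2u_y\|^2$ terms cancelling exactly as you say. The gap is in the third step, and it is not the cross-term issue you flag: the ingredients you permit yourself (the master inequality plus uniform strictness of $M(\cd)$, i.e. $\mathrm{Im}\,M(iy)\ge(\e_0/y)I_{\cH_1}$) are \emph{provably insufficient} for \eqref{2.20}. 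Combining them gives only $(\e_0/y)\|u_y\|^2\le\|h\|\,\|u_y\|$, i.e. $\|u_y\|=O(y)$, never $o(y)$, and there is a counterexample sitting exactly at this threshold. Take $\gH=\bC$ and the symmetric linear relation $A=\{\{0,0\}\}$ (whose domain is \emph{not} dense), with boundary triplet $\G_0\{c,c'\}=c'$, $\G_1\{c,c'\}=-c$; then $M(\l)=-1/\l$ is a uniformly strict Nevanlinna function, and for the constant parameter $\tau\equiv 0\in\RZ$ one computes $u_y=M(iy)^{-1}h=-iy\,h$, so that $(\tau(iy)+M(iy))^{-1}/y\equiv -i\ne 0$. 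In this configuration your Weyl identity, the master inequality (with equality!), the bound $\mathrm{Im}\,M(iy)=y^{-1}$, and even $s\hbox{-}\lim M(iy)/y=0$ all hold, and $\cH_2=\{0\}$ so there are no cross terms at all --- yet \eqref{2.20} fails. Since your argument nowhere uses the hypothesis that $A$ is densely defined, it would ``prove'' the statement here too; hence no amount of care with the cross terms can close the gap.

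What is missing is precisely the consequence of density of $\cD(A)$: for the Weyl function of a densely defined operator one has $\lim_{y\to+\infty}y\,\mathrm{Im}(M(iy)h,h)=+\infty$ for every $h\ne0$ (because $\g(i)h\notin\cD(A_0)$ unless $h=0$), a property strictly stronger than uniform strictness and exactly the one violated by $M(\l)=-1/\l$. Even granting it, passing to the strong limits \eqref{2.20}, \eqref{2.21} along the $y$-dependent vectors $u_y$ is delicate; in particular your final appeal to $s\hbox{-}\lim M_+(iy)/y=0$ is illegitimate, since a strong limit cannot be applied to a moving argument $u_y$ without norm control. This is also why the paper does not argue this way at all: Proposition \ref{pr2.4a} is quoted as a known admissibility-type result (see Remark \ref{rem2.5} and \cite{DM00,DHM09,Mog06.2}), whose proof realizes $(\tau_+(\l)+M_+(\l))^{-1}$ and $M-M_+(\tau_++M_+)^{-1}M$ inside the compressed resolvent of an exit-space selfadjoint extension $\wt A\supset A$ via a Krein-type formula, and then uses $s\hbox{-}\lim_{y\to+\infty}iy(\wt A-iy)^{-1}=-P_{\overline{\cD(\wt A)}}$ together with $\overline{\cD(A)}=\gH$. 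A direct proof along your lines would have to inject the density hypothesis through one of these channels.
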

\begin{remark}\label{rem2.5}
If a $D$-triplet $\Pi=\bta$ satisfies the relation
$\cH_0=\cH_1:=\cH\;(\Leftrightarrow A_0=A_0^*)$, then it is a boundary triplet.
More precisely this means that the collection $\Pi=\{\cH,\G_0,\G_1\}$ is a
boundary triplet (boundary value space) for $A^*$ in the sense of
\cite{GorGor}. In this  case the relations
\begin {equation}\label{2.28}
\g(\l)=(\G_0\up\gN_\l(A))^{-1}, \qquad \G_1\up\gN_\l(A)=M(\l)\G_0 \up\gN_\l(A),
\qquad \l\in\rho (A_0)
\end{equation}
define the operator functions \cite{DM91}  $\g(\cd):\rho (A_0)\to [\cH,\gH]$
(the $\g$-field) and  $M(\cd):\rho (A_0)\to [\cH]$ (the Weyl function)
associated  with the operator functions  \eqref{2.17}--\eqref{2.19} by
$\g(\l)=\g_\pm(\l)$  and $M(\l)=M_\pm(\l), \;\l\in\bC_\pm $. Observe also that
for a boundary triplet $\Pi$ Proposition \ref{pr2.4a} follows from the
$\Pi$-admissibility criterion obtained in \cite{DM00,DHM09}.
\end{remark}

\subsection{Differential operators}
Let $\D=[0,b\rangle\; (b\leq \infty)$ be an interval on the real axis (in the
case $b<\infty$ the point $b$ may or may not belong to $\Delta$), let $H$ be a
separable Hilbert space and let
\begin{equation}\label{3.1}
l[y]=\sum_{k=1}^n (-1)^k ( (p_{n-k}y^{(k)})^{(k)}-\tfrac {i}{2}
[(q_{n-k}^*y^{(k)})^{(k-1)}+(q_{n-k} y^{(k-1)})^{(k)}])+p_n y,
\end{equation}
be a differential expression of an even order $2n$ with smooth enough
operator-valued coefficients $p_k(\cd), q_k(\cd):\D\to [H]$ such that $
p_k(t)=p_k^*(t)$ and $ 0\in\rho (p_0(t))$. Denote by $y^{[k]}(\cd), \; k=0\div
2n$ the quasi-derivatives of a vector-function $y(\cd):\D\to H$, corresponding
to the expression \eqref{3.1} and let $\cD (l)$ be the set of functions
$y(\cd)$ for which this expression makes sense \cite{Nai, Rof69,HolRof}. With
every function $y\in\cD (l)$ we associate the functions $y^{(j)}(\cd):\D\to
H^n, \; j\in\{1,2\}$ and $\wt y(\cd):\D\to H^n\oplus H^n$ by setting
\begin{align}
y^{(1)}(t):=\{y^{[k-1]}(t)\}_{k=1}^n (\in H^n), \qquad y^{(2)}(t):=\{y^{[2n-k]}(t)\}_{k=1}^n (\in H^n),\label{3.2}\\
\wt y(t)=\{y^{(1)}(t),y^{(2)}(t)\} (\in H^n\oplus H^n), \qquad
t\in\D.\qquad\qquad \label{3.3}
\end{align}

Let $\cK$ be a Hilbert space and let $Y(\cd):\D\to [\cK,H]$ be an operator
solution of the differential equation
\begin{equation}\label{3.4}
l[y]-\l y=0.
\end{equation}
With each such a solution we associate the operator-functions
$Y^{(j)}(\cd):\D\to [\cK,H^n],\;$ $ j\in\{1,2\}$  and  $\wt Y(\cd):\D\to
[\cK,H^n\oplus H^n]$,
\begin{align*}
Y^{(1)}(t)=(Y(t)\;\;Y^{[1]}(t)\;\dots\; Y^{[n-1]}(t))^\top, \;\;\;  Y^{(2)}(t)=(Y^{[2n-1]}(t)\;\;Y^{[2n-2]}(t)\;\dots \; Y^{[n]}(t))^\top,\\
\wt Y(t)=(Y^{(1)}(t)\;\;\;Y^{(2)}(t))^\top : \cK \to  H^n\oplus H^n,\qquad
t\in\D,\qquad\qquad
\end{align*}
where $Y^{[k]}(\cd),\;k=0\div 2n-1$ are quasi-derivatives of $Y(\cd)$.

In what follows   $\gH(=L_2(\D; H))$ is a Hilbert space of all measurable
functions $f(\cd):\D\to H$ such that $\int _0^b ||f(t)||^2\,dt<\infty$.
Moreover  $\LH{\cK}$ stands for the set of all operator-functions $Y(\cd):\D\to
[\cK, H]$ such that $Y(t)h\in \gH$ for all $h\in\cK$.

It  is known \cite{Nai,Rof69,HolRof} that the expression \eqref{3.1} generates
the maximal operator $L$ in $\gH$, defined on the domain $\cD=\cD
(L):=\{y\in\cD (l)\cap \gH: l[y]\in\gH\}$ by $Ly=l[y], \; y\in\cD$. Moreover
the Lagrange's identity
\begin {equation}\label{3.5}
(Ly,z)_\gH -(y,Lz)_\gH=[y,z](b)-[y,z](0),\qquad y,z\in\cD
\end{equation}
holds with
\begin {equation}\label{3.6}
[y,z](t)=(y^{(1)}(t),z^{(2)}(t))_{H^n}-(y^{(2)}(t),z^{(1)}(t))_{H^n}, \quad
[y,z](b)=\lim_{t\uparrow b} [y,z](t).
\end{equation}

Let $\cD_0=\{y\in \cD: \wt y(0)= 0 \;\; \text{and} \;\; [y,z](b)=0, z\in\cD\}$
and let $L_0=L\up \cD_0$ be the minimal operator generated by the expression
\eqref{3.1}. Then $L_0$ is a closed densely defined symmetric operator in $\gH$
and $L_0^*=L$ \cite {Nai,Rof69,HolRof}. Moreover the deficiency indices $n_\pm
(L_0)$ of the operator $L_0$ are not necessarily equal.

Let $\t=\t^*\in\C (H^n)$ and let $L_\t$ be a symmetric extension of $L_0$ with
the domain $\cD(L_\t)=\{y\in\cD:\wt y(0)\in\t,\;[y,z](b)=0 \;\forall
z\in\cD\}$. According to \cite{Mog09.1} deficiency indices $n_\pm(L_\t)$ of an
operator $L_\t$ do not depend on $\t(=\t^*)$, which enables us to introduce the
deficiency indices  at the right endpoint $b$ by $n_{b\pm}:=n_\pm(L_\t)$.
\subsection{Decomposing boundary triplets}
Assume that $\cH'_1$ is a subspace in a Hilbert space $\cH'_0$,
$\cH_2':=\cH'_0\ominus\cH_1'$, $\G_0':\cD\to \cH'_0$ and $\G_1':\cD\to \cH'_1$
are linear maps and  $P'_j$ is the orthoprojector in $\cH'_0$ onto $\cH'_j, \;
j\in\{1,2\}$. Moreover let $\cH_0=H^n\oplus \cH'_0, \;\cH_1=H^n\oplus \cH'_1$
and let $\G_j:\cD\to \cH_j,\;j\in\{0,1\} $ be linear maps given for all
$y\in\cD$ by
\begin {equation}\label{3.10}
\G_0 y=\{y^{(2)}(0), \G'_0y\}\,(\in H^n\oplus \cH'_0), \;\;\; \G_1
y=\{-y^{(1)}(0), \G'_1y\}\,(\in H^n\oplus \cH'_1) .
\end{equation}
\begin{definition}\label{def3.2}\cite{Mog09.1}
A collection $\Pi=\bta$, where $\G_0$ and $\G_1$ are linear maps \eqref{3.10},
is said to be a decomposing $D$-boundary triplet (briefly a decomposing
$D$-triplet) for $L$ if the map
$\G'=(\G'_0\;\;\G'_1)^\top:\cD\to\cH_0'\oplus\cH'_1$ is surjective and the
following identiy holds
\begin{equation}\label{3.11}
[y,z](b)=(\G'_1 y,\G'_0 z)- (\G'_0 y,\G'_1 z)+i(P'_2\G'_0 y,P'_2\G'_0 z), \quad
y,z \in \cD.
\end{equation}
\end{definition}
In the case $\cH'_0=\cH'_1=:\cH'\;(\iff \cH_0=\cH_1=:\cH)$ a decomposing
$D$-triplet $\Pi=\bt$ is called a decomposing boundary triplet for $L$. For
such a triplet the identity \eqref{3.11} takes the form
\begin {equation}\label{3.12}
[y,z](b)=(\G'_1 y,\G'_0 z)- (\G'_0 y,\G'_1 z), \quad y,z \in \cD.
\end{equation}
As was shown in \cite{Mog09.1}, Lemma 3.4 a decomposing $D$-triplet (a
decomposing boundary triplet) for $L$ is a $D$-triplet (a boundary triplet) in
the sense of Definition \ref{def2.4} and Remark \ref{rem2.5}. Moreover  a
decomposing $D$-triplet (boundary triplet) for $L$  exists if and only if
$n_{b-}\leq n_{b+}$ (respectively, $n_{b-}= n_{b+}$), in which case
\begin {equation}\label{3.13}
\dim\cH_1'=n_{b-}\leq n_{b+}=\dim\cH_0', \qquad \dim\cH_1=n_-(L_0)\leq n_+(L_0)
=\dim \cH_0
\end{equation}
(respectively, $ n_{b-}=n_{b+}=\dim\cH'$ and $ n_-(L_0)=n_+(L_0)=\dim \cH $).
Therefore in the sequel we suppose (without loss of generality)   that
$n_{b-}\leq n_{b+}$ and, consequently, $n_-(L_0)\leq n_+(L_0)$.
\begin{proposition}\label{pr3.2a}\cite{Mog09.1}
Let $\bta$ be a decomposing $D$-triplet \eqref{3.10} for $L$ and let $\g_\pm
(\cd)$ be the corresponding $\g$-fields \eqref{2.17}. Then:

1)  For each $\l\in\bC_+\;(z\in\bC_-)$ there exists a  unique operator function
$Z_+(\cd,\l)\in\LH{\cH_0}\;(Z_-(\cd,z)\in\LH{\cH_1})$, satisfying \eqref{3.4}
and the boundary condition $\G_0(Z_+(t,\l)h_0)=h_0, \; h_0\in \cH_0$ (resp.
$P_1\G_0(Z_-(t,z)h_1)=h_1, \; h_1\in \cH_1$). If
\begin{align}
Z_+(t,\l)=(v_0(t,\l)\;\;u_+(t,\l)):H^n\oplus\cH'_0\to H, \quad \l \in \bC_+,
\label{3.17}\\
Z_-(t,z)=(v_0(t,z)\;\;u_-(t,z)):H^n\oplus\cH'_1\to  H, \quad
z\in\bC_-\label{3.18}
\end{align}
are the block representations of $Z_+(\cd,\l)$ and $Z_-(\cd,z)$, then the above
boundary condition can be represented as
\begin{gather*}
v_0^{(2)}(0,\mu)=I_{H^n}\;\;(\mu\in\CR); \;\;  \G'_0(v_0(t,\l)\hat h)=0,\;\;
P'_1\G'_0(v_0(t,z)\hat h)=0,\;\; \hat h\in H^n\\
u_+^{(2)}(0,\l)=0; \quad \G'_0 (u_+(t,\l) h'_0)=h_0',\quad \l\in\bC_+, \;\;\;
 h'_0\in\cH'_0; \\
 u_-^{(2)}(0,z)=0, \quad P_1'\G'_0(u_-(t,z) h'_1)=h_1',\quad z\in\bC_-,\;\;\;
 h'_1\in\cH'_1.
\end{gather*}

2) for all $\l\in\bC_+$ and $z\in\bC_-$ the following equalities hold
\begin {equation}\label{3.18a}
(\g_+(\l) h_0)(t)=Z_+(t,\l) h_0,\;\;  h_0\in \cH_0\quad (\g_-(z)
h_1)(t)=Z_-(t,z) h_1, \;\;  h_1\in \cH_1.
\end{equation}
\end{proposition}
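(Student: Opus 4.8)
The plan is to realize the abstractly defined $\g$-fields \eqref{2.17} as concrete operator solutions of \eqref{3.4} and then to read off the stated componentwise boundary conditions from the block structure \eqref{3.10} of $\G_0$. First I would recall that, since the decomposing $D$-triplet $\bta$ is a $D$-triplet in the sense of Definition \ref{def2.4} (\cite{Mog09.1}, Lemma 3.4), the defect subspace $\gN_\l(L_0)=\Ker(L-\l)$ consists precisely of the $L_2$-solutions of \eqref{3.4}: every $y\in\cD$ with $l[y]=\l y$ lies in $\gH$, and conversely each such solution has a smooth representative solving the equation classically. By the fact recorded just before \eqref{2.17}, for $\l\in\bC_+$ the map $\G_0\up\gN_\l(L_0):\gN_\l(L_0)\to\cH_0$ is an isomorphism (and $P_1\G_0\up\gN_z(L_0):\gN_z(L_0)\to\cH_1$ for $z\in\bC_-$). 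This single fact already furnishes existence and uniqueness of an $L_2$-solution with prescribed $\G_0$-data, so for each $h_0\in\cH_0$ I would set $Z_+(t,\l)h_0:=(\g_+(\l)h_0)(t)$, using the smooth representative of the class $\g_+(\l)h_0\in\gN_\l(L_0)$; this defines a linear operator $Z_+(t,\l):\cH_0\to H$ for every $t$ and establishes assertion 2) essentially by definition.

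Next I would verify that $Z_+(\cd,\l)\in\LH{\cH_0}$ solves \eqref{3.4}. Membership in $\LH{\cH_0}$ holds because $\g_+(\l)\in[\cH_0,\gH]$ is bounded, whence $Z_+(\cd,\l)h_0=\g_+(\l)h_0\in\gH$ for every $h_0$; that $Z_+$ is an operator solution of \eqref{3.4} follows because each value $\g_+(\l)h_0$ lies in $\gN_\l(L_0)=\Ker(L-\l)$ and hence solves $l[y]=\l y$. The defining property of the $\g$-field gives $\G_0(Z_+(\cd,\l)h_0)=\G_0\g_+(\l)h_0=h_0$ for all $h_0\in\cH_0$, which is exactly the asserted boundary condition.

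To obtain the explicit componentwise conditions I would substitute the block representation \eqref{3.17} and use $\G_0 y=\{y^{(2)}(0),\G_0'y\}$ from \eqref{3.10}. Taking $h_0=\{\hat h,0\}$ with $\hat h\in H^n$ yields $Z_+(t,\l)h_0=v_0(t,\l)\hat h$, and $\G_0(v_0(t,\l)\hat h)=\{\hat h,0\}$ separates into $v_0^{(2)}(0,\l)=I_{H^n}$ together with $\G_0'(v_0(t,\l)\hat h)=0$; taking $h_0=\{0,h_0'\}$ with $h_0'\in\cH_0'$ gives $Z_+(t,\l)h_0=u_+(t,\l)h_0'$ and likewise $u_+^{(2)}(0,\l)=0$ with $\G_0'(u_+(t,\l)h_0')=h_0'$. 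Uniqueness is then immediate, since any operator solution in $\LH{\cH_0}$ realizing the same $\G_0$-data has all its values in $\gN_\l(L_0)$ and the same image under the injective map $\G_0\up\gN_\l(L_0)$. The case $z\in\bC_-$ is handled identically, replacing $\g_+(\l)$ by $\g_-(z)$ and $\G_0$ by $P_1\G_0$, whose action is $P_1\G_0 y=\{y^{(2)}(0),P_1'\G_0'y\}$; this produces $v_0^{(2)}(0,z)=I_{H^n}$, $P_1'\G_0'(v_0(t,z)\hat h)=0$, $u_-^{(2)}(0,z)=0$ and $P_1'\G_0'(u_-(t,z)h_1')=h_1'$.

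The step I expect to require the most care is the passage from the abstract operator $\g_+(\l)$, whose values are $L_2$-equivalence classes, to a genuinely pointwise-defined operator solution $Z_+(\cd,\l)$: one must fix the smooth solution representative of each class, check that the assignment $h_0\mapsto Z_+(t,\l)h_0$ is linear and bounded $\cH_0\to H$ for each fixed $t$, and confirm the regularity in $t$ needed for $Z_+$ to qualify as an operator solution of \eqref{3.4} in $\LH{\cH_0}$. Everything else reduces to bookkeeping with the decomposition \eqref{3.10} and the isomorphism property already recorded in the text.
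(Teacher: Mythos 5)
The paper itself does not prove Proposition \ref{pr3.2a}; it is imported from \cite{Mog09.1}. Judged on its own terms, your route is the natural one and surely the intended one: identify $\gN_\l(L_0)=\Ker(L-\l)$ with the $L_2$-solutions of \eqref{3.4}, define $Z_+(\cd,\l)$ as the pointwise realization of $\g_+(\l)$ from \eqref{2.17}, read the componentwise boundary conditions off the block structure \eqref{3.10}, and get uniqueness from injectivity of $\G_0\up\gN_\l(L_0)$. Your handling of the boundary conditions, of statement 2), and of uniqueness is correct, as is the reduction of the case $z\in\bC_-$ to $P_1\G_0$ and $\g_-(z)$.

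There is, however, one genuine gap, and it is exactly the step you flag and then leave undone: you never prove that $Z_+(t,\l)\in[\cH_0,H]$ for fixed $t$, nor that the quasi-derivatives $Z_+^{[k]}(\cd,\l)$ are again $[\cH_0,H]$-valued, which is required both for membership in $\LH{\cH_0}$ (its definition demands operator values, not only $Z_+(\cd,\l)h_0\in\gH$, which is all your boundedness remark about $\g_+(\l)$ delivers) and for $Z_+(\cd,\l)$ to be an operator solution of \eqref{3.4}. Boundedness of the evaluation map $h_0\mapsto(\g_+(\l)h_0)(t)$ is not automatic from boundedness of $\g_+(\l)$ in the $L_2$-norm. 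The standard way to close this: let $Y_0(\cd,\l)$ be the canonical operator solution of \eqref{3.4} with $\wt Y_0(0,\l)=I_{H^n\oplus H^n}$ (as in Subsection 2.6 of the paper), so every $y\in\gN_\l(L_0)$ satisfies $y(t)=Y_0(t,\l)\wt y(0)$. The initial-data map $T:\gN_\l(L_0)\to H^n\oplus H^n$, $Ty=\wt y(0)$, is closed: if $y_n\to y$ in $\gH$ and $\wt y_n(0)\to c$, then $y_n(t)=Y_0(t,\l)\wt y_n(0)\to Y_0(t,\l)c$ uniformly on compact subintervals of $\D$, hence in $L_2$ there, so $y=Y_0(\cd,\l)c$ and $\wt y(0)=c$. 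Since $\gN_\l(L_0)$ is a closed subspace of $\gH$, the closed graph theorem makes $T$ bounded, so $B(\l):=T\g_+(\l)\in[\cH_0,H^n\oplus H^n]$, and then $Z_+(t,\l)=Y_0(t,\l)B(\l)$ and $Z_+^{[k]}(t,\l)=Y_0^{[k]}(t,\l)B(\l)$ exhibit $Z_+(\cd,\l)$ as an operator solution in $\LH{\cH_0}$ with bounded operator values and the needed regularity in $t$. With this insertion your argument is complete.
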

Next assume that $M_\pm(\cd)$ are the  Weyl functions \eqref{2.18},
\eqref{2.19} corresponding to the $D$-triplet $\Pi$ and let
\begin{gather}
M_+(\l)=\begin{pmatrix} m(\l) & M_{2+}(\l)\cr M_{3+}(\l) &
M_{4+}(\l)\end{pmatrix}:
H^n\oplus\cH'_0 \to H^n\oplus \cH'_1, \quad \l\in\bC_+\label{3.21}\\
M_-(z)=\begin{pmatrix} m(z) & M_{2-}(z)\cr M_{3-}(z) & M_{4-}(z)\end{pmatrix}:
H^n\oplus\cH'_1 \to H^n\oplus \cH'_0, \quad z\in\bC_-\label{3.22}
\end{gather}
be their block representations. As was proved in \cite [Theorem 3.12]{Mog09.1}
all the entries in \eqref{2.18} and \eqref{2.19} can be defined immediately in
terms of boundary values of the functions $v_0(\cd,\l)$ and $u_\pm(\cd,\l)$. In
particular formulas \eqref{3.21} and \eqref{3.22} generate the uniformly strict
Nevanlinna function $m(\l)=-v_0^{(1)}(0,\l)$, which we called in \cite{Mog09.1}
the $m$-function.
\subsection{Generalized resolvents and characteristic matrices}
Let $\wt A\supset L_0$ be an exit space selfadjoint  extension of the operator
$L_0$ acting in the Hilbert space $\wt \gH\supset\gH$ and let $\wt E (t)$ be
the orthogonal spectral function of the operator $\wt A$. Recall that the
operator functions $\R=P_\gH (\wt A- \l)^{-1}\vert \gH, \; \l \in\bC
\setminus\bR $ and $F(t)=P_\gH \wt E (t)\vert \gH$ are called  generalized
resolvent and  spectral function of the operator $L_0$ respectively. In the
sequel we suppose that the spectral function  $\wt E (t)$ (or equivalently  the
extension $\wt A$) is minimal, which means that $\text {span} \{\gH, \wt E(t)
\gH : t\in\bR\}=\wt\gH$.

Let $Y_0(\cd,\l):\D\to [H^n\oplus H^n, H]$ be the "canonical" operator solution
of the equation \eqref{3.4} with the initial data $\wt Y_0 (0,\l)=I_{H^n\oplus
H^n}$ and let
\begin {equation}\label{3.30a}
J_{H^n}:=\begin{pmatrix} 0 & -I_{H^n} \cr I_{H^n} & 0
\end{pmatrix}: H^n\oplus H^n\to  H^n\oplus H^n.
\end{equation}
 According to \cite{Sht57,Bru74} the
generalized resolvent $\R$ admits the representation
\begin {equation}\label{3.30}
(\R f)(x)=\smallint\limits_0^b  G (x,t,\l)f(t)\, dt:=\lim_{\eta \uparrow
b}\smallint\limits_0^\eta   G (x,t,\l)f(t)\, dt,\;\; f=f(\cd)\in\gH
\end{equation}
with the Green function $G(\cd,\cd,\l):\D\times\D\to [H]$ given by
\begin {equation}\label{3.31}
G(x,t,\l)=Y_0(x,\l)(\Om(\l)+\tfrac 1 2 \,
\emph{sgn}(t-x)J_{H^n})Y_0^*(t,\ov\l), \quad \l\in\bC\setminus\bR.
\end{equation}
Here $\Om(\l)(\in [H^n\oplus H^n])$ is a Nevanlinna operator function, which is
called a characteristic matrix of the generalized resolvent $\R$ \cite{Sht57}.

Next assume that $\Pi=\bta$ is a decomposing $D$-triplet \eqref{3.10} for $L$
and  $\pair\in\RH$ is a collection of holomorphic  pairs \eqref{2.14} with the
block representations
\begin{gather}
C_0(\l)=(\hat C_{0}(\l)\;\;C'_{0}(\l)): H^n\oplus\cH_0'\to\cK_0, \qquad
\qquad\qquad \qquad\qquad \qquad\qquad  \label{3.25}\\
\qquad \qquad\qquad \qquad  C_1(\l)=(\hat C_{1}
(\l)\;\;C'_1(\l)):H^n\oplus \cH_1'\to\cK_0 , \quad \l\in\bC_+\nonumber\\
 D_0(\l)=(\hat D_{0}(\l)\;\;D'_{0}(\l)):H^n\oplus\cH_0'\to\cK_1,
\qquad \qquad\qquad \qquad\qquad \qquad\qquad\label{3.26}\\
\qquad \qquad\qquad \qquad  D_1(\l)=(\hat D_{1}(\l)\;\;D'_1(\l)):H^n\oplus
\cH_1'\to\cK_1, \quad \l\in\bC_-\nonumber
\end{gather}
For a given function $f\in\gH$ consider the boundary value problem
\begin{gather}
l[y]-\l y=f \label{3.27}\\
\hat C_0(\l) y^{(2)}(0)+\hat C_1(\l) y^{(1)}(0)+ C'_0(\l) \G'_0 y-C'_1(\l)
\G'_1 y=0, \quad \l\in\bC_+\label{3.28}\\
\hat D_0(\l) y^{(2)}(0)+\hat D_1(\l) y^{(1)}(0)+ D'_0(\l) \G'_0 y-D'_1(\l)
\G'_1 y=0, \quad \l\in\bC_-.\label{3.29}
\end{gather}
In view of \eqref{3.25} and \eqref{3.26}  the conditions \eqref{3.28} and
\eqref{3.29} can be written as
\begin{gather}\label{3.29a}
C_0(\l)\G_0 y-C_1(\l)\G_1 y=0, \;\; \l\in\bC_+;\;\;\; D_0(\l)\G_0 y-D_1(\l)\G_1
y=0, \;\; \l\in\bC_-.
\end{gather}
A function $y(\cd,\cd):\D\tm (\CR) \to H$ is  called a solution of the boundary
problem \eqref{3.27}--\eqref{3.29} if for each $\l\in\CR$ the function
$y(\cd,\l)$ belongs to $\cD$ and satisfies the equation \eqref{3.27} and the
boundary conditions \eqref{3.28}, \eqref{3.29}.
\begin{theorem}\label{th3.5}\cite{Mog10}
 Let  $\pair\in\RH$ be a collection given by \eqref{2.14} and \eqref{3.25},
\eqref{3.26} and let $\wt\Om_{\tau +}(\l)$  and $\wt\Om_{\tau -}(\l)$ be the
operator function defined by
\begin {gather}
\wt\Om_{\tau +}(\l)=\begin{pmatrix}\wt\om_{1+}(\l) & \wt\om_{2+}(\l) \cr
\wt\om_{3+}(\l) & \wt\om_{4+}(\l) \end{pmatrix}:\cH_0\oplus\cH_1\to
\cH_1\oplus\cH_0, \quad \l\in\bC_+,\label{3.19}\\
\wt\om_{1+}(\l)=M_+(\l)-M_+(\l)(\tau_+(\l) +M_+(\l))^{-1}M_+(\l)\label{3.19.1}\\
\wt\om_{2+}(\l)= -\tfrac 1 2 I_{\cH_1} +M_+(\l) (\tau_+(\l) +M_+(\l))^{-1}
\label{3.19.2}\\
\wt\om_{3+}(\l)=-\tfrac 1 2 I_{\cH_0} +(\tau_+(\l) +M_+(\l))^{-1}M_+(\l\label{3.19.3})\\
\wt\om_{4+}(\l)=-(\tau_+(\l) +M_+(\l))^{-1},\label{3.19.4}\\
\wt\Om_{\tau_-}(\l)=(\wt\Om_{\tau +}(\ov\l))^*,\quad \l\in\bC_-.\label{3.19.5}
\end{gather}
 Then:

1) for each $f\in\gH$ the boundary problem \eqref{3.27}--\eqref{3.29} has the
unique solution $y(t,\l)=y_f(t,\l)$ and the equality $(R(\l)f)(t)=y_f(t,\l),\;
f\in\gH,\;\l\in\bC\setminus\bR$ defines a generalized resolvent  $\R:= R_\tau
(\l)$ of the minimal operator $L_0$;

2) the characteristic matrix of the generalized resolvent  $ R_\tau (\l)$ is
\begin {equation}\label{3.24}
\Om(\l)=\Om_\tau (\l):=P_{H^n\oplus H^n}\,\wt \Om_{\tau +}(\l)\up H^n\oplus
H^n,\quad \l\in\bC_+.
\end{equation}
Conversely for each generalized resolvent  $\R$ there exists the unique
$\tau\in\RH$ such that $\R=R_\tau (\l)$. Moreover $R_\tau (\l)$ is a canonical
resolvent if and only if $\tau\in \RZ$.
\end{theorem}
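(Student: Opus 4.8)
The plan is to reduce the concrete boundary problem \eqref{3.27}--\eqref{3.29} to the abstract boundary problem attached to the $D$-triplet $\Pi=\bta$, and then to read off both the resolvent and its characteristic matrix from the Weyl function $M_+(\cd)$. By \eqref{3.29a} and the definition \eqref{2.6} of the relations $\tau_\pm(\l)$, the boundary conditions \eqref{3.28}, \eqref{3.29} say precisely that $\{\G_0 y(\cd,\l),\G_1 y(\cd,\l)\}$ lies in $\tau_+(\l)$ (resp. $\tau_-(\l)$) up to the sign carried by the minus sign in $(*)$. First I would treat $\l\in\bC_+$, where the construction is cleanest; then I would extend to $\bC_-$ by the adjoint relation and check consistency; and only afterwards would I compute the characteristic matrix.

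For unique solvability on $\bC_+$, recall that the $D$-triplet yields $A_0=L\up\Ker\G_0$ with $n_-(A_0)=0$ (see \eqref{2.16}), so every $\l\in\bC_+$ is a regular-type point of $A_0$ with dense range, whence $\bC_+\subset\rho(A_0)$ and $(A_0-\l)^{-1}\in[\gH]$. Since $\gN_\l(L_0)=\Ker(L-\l)$ has dimension $\dim\cH_0$ and $\g_+(\l)$ maps $\cH_0$ isomorphically onto it by \eqref{2.17}, the general solution of $l[y]-\l y=f$ in $\cD$ is
\begin{equation*}
y=(A_0-\l)^{-1}f+\g_+(\l)h_0,\qquad h_0\in\cH_0.
\end{equation*}
Applying $\G_0,\G_1$ and using $(A_0-\l)^{-1}f\in\Ker\G_0$, the normalization $\G_0\g_+(\l)=I_{\cH_0}$, the Weyl relation \eqref{2.18}, and the standard $D$-triplet identity $\G_1(A_0-\l)^{-1}=\g_-^*(\ov\l)$, one gets $\G_0 y=h_0$ and $\G_1 y=M_+(\l)h_0+\g_-^*(\ov\l)f$. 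Inserting these into the boundary condition and rewriting it through $\tau_+(\l)$ gives $h_0=-(\tau_+(\l)+M_+(\l))^{-1}\g_-^*(\ov\l)f$, provided $\tau_+(\l)+M_+(\l)$ has bounded everywhere-defined inverse for $\l\in\bC_+$; this in turn follows by confronting the dissipativity \eqref{2.14a} of $\tau_+$ with the uniform strictness of $M(\cd)$ coming from \eqref{2.19a}, \eqref{2.19b}, so that the two imaginary parts cannot cancel. Hence $h_0$ is unique and
\begin{equation*}
R_\tau(\l)=(A_0-\l)^{-1}-\g_+(\l)(\tau_+(\l)+M_+(\l))^{-1}\g_-^*(\ov\l),\qquad \l\in\bC_+,
\end{equation*}
whose rank term carries exactly the block $\wt\om_{4+}(\l)$ of \eqref{3.19.4}.

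To see that $R_\tau$ is a generalized resolvent, and for the converse, I would set $R_\tau(z):=R_\tau(\ov z)^*$ for $z\in\bC_-$ and verify that it solves \eqref{3.29}; here the compatibility equation \eqref{2.11} of the Nevanlinna collection, equivalently the relation $-\tau_+(\l)=(-\tau_-(\ov\l))^\tm$ recorded after \eqref{2.14a}, is precisely what makes the $\bC_+$- and $\bC_-$-conditions dual to one another. The symmetry $R_\tau(\l)^*=R_\tau(\ov\l)$ and the resolvent identity are then immediate from the Krein formula and $M_+^*(\l)=M_-(\ov\l)$, while the remaining growth/admissibility conditions are furnished by Proposition \ref{pr2.4a}, i.e. \eqref{2.20}, \eqref{2.21}, which (via the $\Pi$-admissibility criterion noted in Remark \ref{rem2.5}) guarantee that the formula represents a minimal exit-space selfadjoint extension rather than merely a holomorphic operator family. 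The converse and the canonical case rest on the abstract bijection $\tau\mapsto R_\tau$ between $\RH$ and the generalized resolvents of $L_0$: injectivity holds because distinct equivalence classes $\tau_\pm(\l)$ produce distinct boundary data, surjectivity by running the argument backwards from a given $R(\l)$, and $R_\tau$ is canonical exactly when $\tau\in\RZ$ is constant, i.e. a selfadjoint operator pair corresponding to an extension acting inside $\gH$.

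Finally, for the characteristic matrix I would use Proposition \ref{pr3.2a} to replace $\g_+(\l)$ by the operator solution $Z_+(\cd,\l)$ and, dually, $\g_-^*(\ov\l)$ by the kernel built from $Z_-(\cd,\ov\l)$ via \eqref{3.18a}, and likewise write $(A_0-\l)^{-1}$ as an integral operator whose Green function is assembled from the canonical solution $Y_0(\cd,\l)$ with $\wt Y_0(0,\l)=I$. Since every solution of $l[y]-\l y=0$ is $Y_0(\cd,\l)$ times a constant operator fixed by its initial data, both terms of the Krein formula acquire kernels of the form $Y_0(x,\l)(\,\cd\,)Y_0^*(t,\ov\l)$, so that collecting them exhibits $R_\tau(\l)$ as the integral operator \eqref{3.30} with Green function \eqref{3.31}; comparing coefficients identifies $\Om(\l)$ with the block matrix assembled from $M_+(\l)$ and $(\tau_+(\l)+M_+(\l))^{-1}$, that is, with the compression $P_{H^n\oplus H^n}\wt\Om_{\tau+}(\l)\up H^n\oplus H^n$ of \eqref{3.19}. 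The main obstacle is exactly this last bookkeeping step: one must handle correctly the jump term $\tfrac12\,\text{sgn}(t-x)J_{H^n}$ in \eqref{3.31}, reconcile it with the constant blocks $-\tfrac12 I$ of \eqref{3.19.2}, \eqref{3.19.3}, and, using the decompositions $\cH_0=H^n\oplus\cH_0'$ and $\cH_1=H^n\oplus\cH_1'$, check that the compression onto $H^n\oplus H^n$ selects precisely the entries of $\wt\Om_{\tau+}(\l)$ asserted in \eqref{3.24}.
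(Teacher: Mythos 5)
You should note first that this paper contains no proof of Theorem \ref{th3.5} at all: the theorem is imported from \cite{Mog10}, so the only available comparison is with the argument that reference uses and with the fragments of it reproduced elsewhere in this paper (the invertibility $0\in\rho(C_0(\l)-C_1(\l)M_+(\l))$ and formulas \eqref{4.32}, \eqref{4.33} quoted in the proof of Proposition \ref{pr4.8}, and the kernel representation taken from ``Theorem 14 in \cite{Mog10}'' inside Theorem \ref{th4.9}). Your outline does follow that route, and its $\bC_+$ half is essentially sound: $n_-(A_0)=0$ gives $\bC_+\subset\rho(A_0)$; the general solution of $l[y]-\l y=f$ in $\cD$ is $(A_0-\l)^{-1}f+\g_+(\l)h_0$; the identity $\G_1(A_0-\l)^{-1}=\g_-^*(\ov\l)$ is correct (Green's identity, the $P_2$-terms dying because $\G_0$ vanishes on $\cD(A_0)$ and $\G_1 u\in\cH_1\perp\cH_2$); and the boundary condition then forces $h_0=-(\tau_+(\l)+M_+(\l))^{-1}\g_-^*(\ov\l)f$, which yields the Krein-type formula for $R_\tau(\l)$.

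As a proof, however, the proposal has genuine gaps, and the largest sits exactly where the theorem has its content. (a) Statement 2) is never actually proved: the Krein formula exhibits only the block \eqref{3.19.4}; the blocks \eqref{3.19.1}--\eqref{3.19.3}, and in particular the constants $-\tfrac 1 2 I$ that must be reconciled with the jump $\tfrac 1 2\,\mathrm{sgn}(t-x)J_{H^n}$ of \eqref{3.31}, emerge only from an explicit computation of the kernels of $(A_0-\l)^{-1}$, $\g_+(\l)$ and $\g_-^*(\ov\l)$ in terms of $Y_0(\cd,\l)$, followed by comparison with \eqref{3.31}. You name this as ``the main obstacle'' and stop; but that bookkeeping \emph{is} the assertion \eqref{3.24}. (b) Statement 1) on $\bC_-$ is not established: since $n_-(L_0)<n_+(L_0)$ is allowed, $\bC_-\not\subset\rho(A_0)$ and the $\bC_+$ construction cannot be mirrored; your plan to set $R_\tau(z):=R_\tau(\ov z)^*$ and ``verify'' \eqref{3.29} via \eqref{2.11} requires the dual formula through $\g_-$, $M_-$ and the invertibility of $\wt D_1(z)-\wt D_0(z)M_-(z)$ (cf.\ \eqref{4.33}), none of which you derive; also ``the two imaginary parts cannot cancel'' is heuristic --- in the $D$-triplet setting one needs the quantitative lower bound for $\mathrm{Im}\,M_+(\l)$ (which carries an extra $\tfrac 1 2\|P_2h_0\|^2$ term) played against \eqref{2.14a}, plus a maximality argument to get surjectivity of $\tau_+(\l)+M_+(\l)$. (c) The converse bijection is dismissed in one sentence; constructing from a given generalized resolvent a collection $\tau$ and proving it belongs to $\RH$ --- i.e.\ admits a representation \eqref{2.14} by pairs satisfying \eqref{2.9}--\eqref{2.11} --- is a theorem in itself, resting on the realization theory of \cite{Mog06.1}, not a reversal of the computation. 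Finally, Proposition \ref{pr2.4a} supplies only \emph{necessary} limit conditions; to conclude that the Krein formula produces a generalized resolvent you need the converse (admissibility) implication, which \cite{DM00,DHM09} prove for boundary triplets with equal deficiency indices and which must be re-proved, or cited from \cite{Mog06.2,Mog10}, for $D$-triplets.
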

\begin{proposition}\label{pr3.6}
Assume that $\pair\in\RH$ ia a collection given by \eqref{3.25} and
\eqref{3.26}, $\wt D_1(\l)(\in[\cH_1,\cK_-] )$ and $\wt
D_0(\l)(\in[\cH_0,\cK_-] )$ are defined by
\begin {equation}\label{3.33}
\wt D_1(\l):=D_0(\l)\up \cH_1, \;\;\;\; \wt D_0(\l)=D_1(\l) P_1+iD_0(\l)P_2,
\;\;\;\;\l\in\bC_-,
\end{equation}
and $D_{01}(\l)$, $D_{02}(\l)$ are entries of the block representation
\eqref{2.8b}. Moreover let $\g_\tau (\l)(\in [H^n\oplus H^n,\gH])$ and $
\wt\a(\l)(\in [\cK_-,H^n\oplus H^n]) $ be operator functions given by the block
matrix representations
\begin{gather}
\g_\tau (\l)=\g_+(\l)(C_0(\l)-C_1(\l)M_+(\l))^{-1}(-\hat C_0(\l)\;:\; \hat
C_1(\l)), \quad \l\in\bC_+\label{3.34}\\
\wt\a(\l)=\begin{pmatrix}-P_{H^n}M_-(\l) \cr P_{H^n} \end{pmatrix} (\wt
D_1(\l)-\wt D_0(\l)M_-(\l))^{-1},\quad \l\in\bC_-.\label{3.35}
\end{gather}
Then the corresponding characteristic matrix $\Om_\tau(\cd)$  satisfies the
identity
\begin{gather}
\Om_\tau(\mu)-\Om_\tau^*(\l)=(\mu-\ov\l)\g_\tau^*(\l)\g_\tau(\mu)-
  \wt\a(\ov\l)(D_{1}(\ov\l)D_{01}^*(\ov\mu)-D_{01}(\ov\l)D_1^*(\ov\mu)+
 \label{3.36}\\
+i D_{02}(\ov\l)D_{02}^*(\ov\mu))\wt\a^*(\ov\mu), \quad\; \mu,\l\in
\bC_+.\nonumber
\end{gather}
 Moreover the following equality holds
\begin {equation}\label{3.37}
s-\lim_{y\to\infty} \Om_\tau (i \,y)/y=0
\end{equation}
\end{proposition}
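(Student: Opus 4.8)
The plan is to prove the two assertions separately, using Theorem \ref{th3.5} to reduce everything to the explicit block formulas \eqref{3.19.1}--\eqref{3.19.4} for $\wt\Om_{\tau+}(\l)$, together with the $\g$-field identities and the Weyl-function relation $M_+^*(\l)=M_-(\ov\l)$. For the identity \eqref{3.36}, I would start from the definition \eqref{3.24}, namely $\Om_\tau(\mu)=P_{H^n\oplus H^n}\wt\Om_{\tau+}(\mu)\up H^n\oplus H^n$, and form the difference $\Om_\tau(\mu)-\Om_\tau^*(\l)$. Since $\Om_\tau^*(\l)$ is the compression of $\wt\Om_{\tau+}^*(\l)=\wt\Om_{\tau-}(\ov\l)$ by \eqref{3.19.5}, the whole computation lives on the top-left $H^n\oplus H^n$ corner of $\wt\Om_{\tau+}$. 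First I would write $\wt\Om_{\tau+}(\mu)-\wt\Om_{\tau+}^*(\l)$ blockwise, substituting the four entries from \eqref{3.19.1}--\eqref{3.19.4}; the key algebraic input is the resolvent-type identity for the Nevanlinna family, i.e.\ an expression for $(\tau_+(\mu)+M_+(\mu))^{-1}-(\tau_-(\ov\l)+M_-(\ov\l))^{-1}$ obtained from the representation $-\tau_+(\mu)=(-\tau_-(\ov\mu))^\tm$ recorded in \eqref{2.14a} and the relation $M_+^*(\mu)=M_-(\ov\mu)$.

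The heart of the matter is to split the resulting expression into a ``$\g$-field part'' proportional to $(\mu-\ov\l)$ and a ``boundary-parameter part'' carried by the term $2\,Im(D_1 D_{01}^*)+\dots$ from the left-hand side of \eqref{2.11}. Concretely, after substituting the blocks I expect the difference to organize into one group of terms that reassembles, via \eqref{3.34} and \eqref{3.18a}, into $(\mu-\ov\l)\g_\tau^*(\l)\g_\tau(\mu)$ — here one uses that $\g_+(\mu)^*\g_+(\mu')=(M_+(\mu')-M_+^*(\mu))/(\mu'-\ov\mu)$, the standard $\g$-field/Weyl-function identity — and a second group that, after factoring out $\wt\a$ from \eqref{3.35} on both sides, produces the combination $D_1(\ov\l)D_{01}^*(\ov\mu)-D_{01}(\ov\l)D_1^*(\ov\mu)+iD_{02}(\ov\l)D_{02}^*(\ov\mu)$ sandwiched between $\wt\a(\ov\l)$ and $\wt\a^*(\ov\mu)$. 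The main obstacle, and where the calculation is most delicate, is matching the definitions \eqref{3.33} of $\wt D_0,\wt D_1$ and the block representation \eqref{2.8b} to the inverse $(\tau_-(\ov\l)+M_-(\ov\l))^{-1}$: one must verify that $\tau_-(\ov\l)+M_-(\ov\l)$ factors so that its inverse is exactly $(\wt D_1(\ov\l)-\wt D_0(\ov\l)M_-(\ov\l))^{-1}$ pre- and post-composed with the right projections, which is precisely what makes $\wt\a$ the correct ``defect'' factor. This is the step I expect to require the full force of the Nevanlinna-collection relations \eqref{2.9}--\eqref{2.11}.

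For the limit \eqref{3.37}, I would set $\mu=\l=iy$ in (a suitably specialized form of) \eqref{3.36}, or argue directly: by \eqref{3.24} it suffices to show $s-\lim_{y\to\infty}\wt\Om_{\tau+}(iy)/y=0$ on the relevant corner. The top-left block is $M_+(iy)-M_+(iy)(\tau_+(iy)+M_+(iy))^{-1}M_+(iy)$, which is exactly the quantity whose normalized strong limit is zero by \eqref{2.21} of Proposition \ref{pr2.4a}; the off-diagonal and lower blocks are controlled by \eqref{2.20}, since $P_1(\tau_+(iy)+M_+(iy))^{-1}/y\to 0$ strongly and the remaining factors $M_+(iy)$, $M(iy)$ are compensated by the same inverse. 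Thus \eqref{3.37} follows by combining \eqref{2.20} and \eqref{2.21} with the compression in \eqref{3.24}, and no new estimate beyond Proposition \ref{pr2.4a} is needed.
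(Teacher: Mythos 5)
Your treatment of \eqref{3.37} contains a genuine gap at the off-diagonal blocks. The diagonal part of your argument is exactly the paper's proof: by \eqref{3.19.1} and \eqref{3.19.4}, the corner blocks $\om_1$ and $\om_4$ of the representation \eqref{3.38} are compressions of $M(\l)-M_+(\l)(\tau_+(\l)+M_+(\l))^{-1}M(\l)$ and $-(\tau_+(\l)+M_+(\l))^{-1}$, so \eqref{2.21} and \eqref{2.20} give $s-\lim\om_1(iy)/y=s-\lim\om_4(iy)/y=0$. But your claim that the off-diagonal blocks are also ``controlled by \eqref{2.20}, since \dots the remaining factors $M_+(iy)$, $M(iy)$ are compensated by the same inverse'' does not hold up: by \eqref{3.19.2} and \eqref{3.19.3} these blocks are compressions of $-\tfrac12 I+M_+(iy)(\tau_+(iy)+M_+(iy))^{-1}$ and $-\tfrac12 I+(\tau_+(iy)+M_+(iy))^{-1}M_+(iy)$, where $M_+(iy)$ sits \emph{outside} the inverse. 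Estimate \eqref{2.20} bounds $P_1(\tau_+(iy)+M_+(iy))^{-1}/y$ only; since $\|M_+(iy)\|$ is in general unbounded in $y$ (and $\|(\tau_+(iy)+M_+(iy))^{-1}\|$ can grow like $y$ even in the uniformly strict case), multiplying that estimate by $M_+(iy)$ destroys it, and Proposition \ref{pr2.4a} contains no estimate covering such mixed products. The missing idea --- and what the paper's terse conclusion ``this and the representation \eqref{3.38} proves \eqref{3.37}'' actually rests on --- is that $\Om_\tau(\cd)$ is a Nevanlinna operator function, so $B:=s-\lim_{y\to\infty}\Om_\tau(iy)/(iy)$ exists and $B\geq 0$; the two diagonal blocks of $B$ vanish by your (and the paper's) computation, and a nonnegative block operator with vanishing diagonal blocks is zero by Cauchy--Schwarz, $|(Bf,g)|^2\leq (Bf,f)(Bg,g)$. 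With positivity in hand the off-diagonal blocks need no estimate at all; without it, your direct approach fails.

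Concerning the identity \eqref{3.36}: the paper does not reprove it, it simply cites \cite{Mog10}, so your plan to derive it from the block formulas \eqref{3.19.1}--\eqref{3.19.4} is a genuinely different (self-contained) route. As written, however, it is a program rather than a proof: the two decisive steps --- the resolvent-type identity for $(\tau_+(\mu)+M_+(\mu))^{-1}-(\tau_-(\ov\l)+M_-(\ov\l))^{-1}$ extracted from the Nevanlinna-collection relations \eqref{2.9}--\eqref{2.11}, and the factorization identifying the inverse with $(\wt D_1-\wt D_0 M_-)^{-1}$ so that exactly the term $\wt\a(\ov\l)(D_1(\ov\l)D_{01}^*(\ov\mu)-D_{01}(\ov\l)D_1^*(\ov\mu)+iD_{02}(\ov\l)D_{02}^*(\ov\mu))\wt\a^*(\ov\mu)$ emerges --- are only announced (``I expect \dots''), not carried out. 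Since all the content of \eqref{3.36} lies in precisely those computations, this part of the proposal cannot yet be credited as a proof; it would need to be executed in full (essentially reproducing the argument of \cite{Mog10}) to stand on its own.
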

\begin{proof}
The identity \eqref{3.36} was proved in \cite{Mog10}. To prove \eqref{3.37}
assume  that
\begin {equation}\label{3.38}
\Om_\tau (\l)=\begin{pmatrix} \om_1(\l) & \om_2(\l) \cr \om_3(\l) & \om_4(\l)
\end{pmatrix}:H^n\oplus H^n\to H^n\oplus H^n, \qquad \l\in\CR
\end{equation}
is the block matrix representation of $\Om_\tau (\l)$. Then by \eqref{3.19.1}
and \eqref{3.19.4}
\begin{gather*}
\om_1(\l)=P_{H^n}(\wt\om_{1+}(\l)\up \cH_1)\up H^n=\\
P_{H^n}(M(\l)-
M_+(\l)(\tau_+(\l) +
M_+(\l))^{-1}M(\l))\up H^n,\\
\om_4(\l)=P_{H^n}(P_1 \,\wt\om_{4+}(\l))\up H^n=-P_{H^n}(P_1 (\tau_+(\l)
+M_+(\l))^{-1})\up H^n,\quad \l \in\bC_+,
\end{gather*}
which in view of \eqref{2.20} and \eqref{2.21} gives
\begin {equation*}
s-\lim_{y\to\infty}\om_1(i\,y)/y=s-\lim_{y\to\infty}\om_4(i\,y)/y=0.
\end{equation*}
This and the representation \eqref{3.38} proves the equality \eqref{3.37}.
\end{proof}
\begin{remark}\label{rem3.5a}
It follows from Theorem \ref{th3.5} that the boundary problem
\eqref{3.27}-\eqref{3.29} gives a parameterization  of all generalized
resolvents $R(\l)=R_\tau(\l)$ and characteristic matrices $\Om
(\l)=\Om_\tau(\l)$ by means of the Nevanlinna boundary parameter $\tau$.
Moreover since a spectral function $F(t)$ is uniquely defined by the
corresponding generalized resolvent $\R$, one obtains the parameterization
$F(t)=F_\tau(t)$ of all spectral functions of the operator $L_0$ by means of
the same boundary parameter $\tau$.
\end{remark}
\section{$m$-functions and characteristic matrices}
\subsection{Quasi-constant and $N$-triangular Nevanlinna collections}
Let $\Pi=\bta$ be a decomposing $D$-triplet \eqref{3.10}  for $L$ (with
$\cH_j=H^n\oplus \cH_j',\; j\in\{0,1\}$). A Nevanlinna collection $\CD$ defined
by \eqref{2.7}, \eqref{2.8} and the block representations \eqref{3.25},
\eqref{3.26} will be called quasi-constant if $\hat C_j(\l)=\hat D_j(z)\equiv
\hat C_j(\in [H^n,\cK_1]), \; j\in \{0,1\}$  for all $\l\in\bC_+$ and
$z\in\bC_-$ (such a definition is correct, since $\cK_1\subset\cK_0$). Clearly,
each constant  pair $\t(=\t^*)=\{(C_0,C_1); \cK\}$ is quasi-constant.

Next assume that
\begin {equation}\label{4.1}
N=(N_0\;\;N_1):H^n\oplus H^n\to \hat \cK
\end{equation}
is an admissible operator pair (that is $\cR (N)=\hat \cK$) and let $\t_N\in \C
(H^n) $ be a linear relation given by $\t_N=\{(N_0,N_1);\hat\cK\}$. The
operator pair \eqref{4.1} will be called symmetric (selfadjoint) if the linear
relation $\t_N$ is symmetric (selfadjoint).
\begin{definition}\label{def4.1}
 A Nevanlinna  collection $\{C(\cd),
D(\cd)\}$ defined by \eqref{2.7}, \eqref{2.8} will be called $N$-triangular if
there exist a Hilbert space $\cK'_0$ and a subspace $\cK'_1\subset\cK'_0$ such
that $\cK_j=\hat\cK\oplus \cK'_j, \;  j\in \{0,1\}$ and the following block
representations hold
\begin{gather}
C_0(\l)=\begin{pmatrix} N_0 & C'_{01}(\l)\cr 0 & C'_{02}(\l)
\end{pmatrix} : H^n\oplus \cH'_0\to \hat\cK\oplus \cK'_0, \;\quad \l\in\bC_+
\label{4.2}\\
C_1(\l)=\begin{pmatrix} N_1 & C'_{11}(\l)\cr 0 & C'_{12}(\l)
\end{pmatrix} : H^n\oplus \cH'_1\to \hat\cK\oplus \cK'_0, \;\quad \l\in\bC_+
\label{4.3}\\
D_0(\l)=\begin{pmatrix} N_0 & D'_{01}(\l)\cr 0 & D'_{02}(\l)
\end{pmatrix} : H^n\oplus \cH'_0\to \hat\cK\oplus \cK'_1, \;\quad \l\in\bC_-
\label{4.4}\\
D_1(\l)=\begin{pmatrix} N_1 & D'_{11}(\l)\cr 0 & D'_{12}(\l)
\end{pmatrix} : H^n\oplus \cH'_1\to \hat\cK\oplus \cK'_1, \;\quad \l\in\bC_-
\label{4.5}
\end{gather}
\end{definition}
A constant  $N$-triangular collection can be regarded as an operator pair
\begin {equation}\label{4.6}
C=(C_0\;\;C_1):(H^n\oplus\cH'_0)\oplus(H^n\oplus\cH'_1) \to \hat\cK\oplus \cK'
\end{equation}
defined by the block matrix representations
\begin {equation}\label{4.7}
C_0=\begin{pmatrix} N_0 & C'_{01}\cr 0 & C'_{02}
\end{pmatrix} : H^n\oplus \cH'_0\to \underbrace{\hat\cK\oplus \cK'}
_{ \cK} , \quad C_1=\begin{pmatrix} N_1 & C'_{11}\cr 0 & C'_{12}
\end{pmatrix} : H^n\oplus \cH'_1\to \underbrace{\hat\cK\oplus \cK'}_{ \cK}
\end{equation}
and satisfying the relations \eqref{2.12a}.

Assume now that $\CD$ is a quasi-constant Nevanlinna  collection defined by
\eqref{2.7}, \eqref{2.8} and \eqref{3.25},\eqref{3.26} and let $\hat\cK(\subset
\cK_1)$ be the range of the operator
\begin {equation}\label{4.13}
\hat C:=C(\l)\up H^n\oplus H^n=(\hat C_0 \;\;\hat C_1):H^n\oplus H^n\to \cK_1.
\end{equation}
It is clear that the collection $\CD$ is $N$-triangular with some $N$ if and
only if $\hat\cK$ is closed, in which case $N_j=\hat C_j(\in [H^n,\hat\cK]),
\;j\in \{0,1\}$ (here $\hat C_j$ is considered as acting from $H^n$ to
$\hat\cK$). In this connection the following proposition holds.
\begin{proposition}\label{pr4.4a}
If $n_{b+}<\infty$ (in particular, $\dim H<\infty$), then each quasi-constant
Nevanlinna collection is $N$-triangular.
\end{proposition}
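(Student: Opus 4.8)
The plan is to reduce everything to the closedness criterion stated immediately before the proposition: a quasi-constant Nevanlinna collection $\CD$ is $N$-triangular (with $N_j=\hat C_j$, $j\in\{0,1\}$) if and only if the range $\hat\cK=\cR(\hat C)$ of the constant operator $\hat C=(\hat C_0\;\;\hat C_1):H^n\oplus H^n\to\cK_1$ from \eqref{4.13} is closed, since closedness of $\hat\cK$ is exactly what permits the orthogonal decompositions $\cK_j=\hat\cK\oplus\cK'_j$ required in Definition \ref{def4.1}. So the whole task is to show that $\hat\cK$ is closed whenever $n_{b+}<\infty$. First I would record the dimension bookkeeping: by \eqref{3.13} the hypothesis forces $\dim\cH_0'=n_{b+}<\infty$ and $\dim\cH_1'=n_{b-}\leq n_{b+}<\infty$, so both boundary spaces at $b$ are finite-dimensional. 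Note that $\cK_1$ itself may be infinite-dimensional (when $\dim H=\infty$), which is precisely why closedness of $\hat\cK$ is not automatic and must be earned.

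The crux is to exhibit a single closed, finite-codimensional subspace of $\cK_1$ lying inside $\hat\cK$. For this I would fix any $\l\in\bC_-$ and invoke the invertibility condition \eqref{2.10}, namely $0\in\rho(D_{01}(\l)+iD_1(\l))$, so that $D_{01}(\l)+iD_1(\l):\cH_1=H^n\oplus\cH_1'\to\cK_1$ is boundedly invertible. Writing it against the decomposition $\cH_1=H^n\oplus\cH_1'$ and using quasi-constancy ($\hat D_j\equiv\hat C_j$), its $H^n$-block is exactly $\hat C_0+i\hat C_1$, while the remaining block $G(\l)$ acts on the finite-dimensional space $\cH_1'$. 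Since the full operator is bounded below, restricting to vectors $\{h,0\}$ shows that $\hat C_0+i\hat C_1$ is bounded below, hence has closed range; and since the full operator is surjective while $\cR(G(\l))$ is finite-dimensional, the identity $\cK_1=\cR(\hat C_0+i\hat C_1)+\cR(G(\l))$ shows that $\cR(\hat C_0+i\hat C_1)$ has finite codimension (at most $\dim\cH_1'=n_{b-}$) in $\cK_1$.

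Finally I would assemble the conclusion. Because $\cR(\hat C_0+i\hat C_1)\subseteq\cR(\hat C_0)+\cR(\hat C_1)=\hat\cK$, the subspace $\hat\cK$ contains a closed subspace of finite codimension in $\cK_1$; any such $\hat\cK$ is itself closed (choosing a finite-dimensional algebraic complement $W_0$ of the closed subspace $V:=\cR(\hat C_0+i\hat C_1)$, one gets $\hat\cK=V+(\hat\cK\cap W_0)$, a sum of a closed subspace and a finite-dimensional one, hence closed). Thus $\hat\cK$ is closed, and the preceding criterion gives that $\CD$ is $N$-triangular with $N_j=\hat C_j$. The main obstacle, and the only genuinely analytic point, is this passage from ``boundedly invertible'' to ``bounded-below $H^n$-block with finite-codimensional range''; the rest is linear bookkeeping. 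I would also stress that condition \eqref{2.10} (over $\bC_-$, with codomain $\cK_1$), rather than \eqref{2.9}, is the right one to use, precisely because $\hat\cK$ lives in $\cK_1$ and not in $\cK_0$.
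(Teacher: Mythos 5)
Your proof is correct, and it takes a genuinely different route from the paper's. The paper's own argument never touches the Nevanlinna conditions \eqref{2.9}--\eqref{2.11}: it uses only admissibility of the pair, $\cR (C(\l))=\cK_0$, to conclude that $\cR (C^*(\l))$ is closed, writes $\hat C^*=P_{H^n\oplus H^n}C^*(\l)$, and then uses the fact that $H^n\oplus H^n$ has finite codimension $\dim (\cH_0'\oplus\cH_1')$ in $\cH_0\oplus\cH_1$ (this is where $n_{b+}<\infty$ enters, via \eqref{3.13}) to conclude that the projected range $\cR (\hat C^*)$, and hence $\hat\cK=\cR(\hat C)$, is closed. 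You instead exploit the invertibility part of axiom \eqref{2.10} together with quasi-constancy, producing inside $\hat\cK$ the closed subspace $V=\cR(\hat C_0+i\hat C_1)$ of finite codimension in $\cK_1$, and finish by elementary linear algebra; every step of this checks out. Here is what each approach buys. The paper's proof is shorter and applies verbatim to any admissible quasi-constant collection, whether or not it satisfies the Nevanlinna inequalities. Your proof is tied to the Nevanlinna structure, but it consumes only $\dim\cH_1'=n_{b-}<\infty$, not $\dim\cH_0'=n_{b+}<\infty$, so it in fact establishes the formally stronger assertion that $n_{b-}<\infty$ suffices; this does not contradict Remark \ref{rem4.5}, but it does show that the counterexample alluded to there must necessarily have $n_{b-}=\infty$ as well. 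As a by-product, your subspace $V$ has dimension $\dim H^n$, which re-proves the lower bound in \eqref{4.10}. One small overstatement: your closing claim that \eqref{2.10} rather than \eqref{2.9} is forced ``because $\hat\cK$ lives in $\cK_1$'' is not quite the right reason --- \eqref{2.9} would work equally well, since a closed subspace of finite codimension in the larger space $\cK_0$ that is contained in $\hat\cK\subset\cK_1\subset\cK_0$ also forces $\hat\cK$ to be closed; the genuine advantage of \eqref{2.10} is only that it trades the hypothesis $n_{b+}<\infty$ for the weaker $n_{b-}<\infty$.
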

\begin{proof}
Since the operator pair \eqref{2.7} is admissible, it follows that $\cR
(C(\l))=\cK_0$ and, therefore, the range of the operator $C^*(\l)$ is a closed
subspace in $\cH_0\oplus\cH_1$. Moreover by \eqref{4.13} $\hat C^*=P_{H^n\oplus
H^n}C^*(\l)$ and, consequently,
\begin {equation}\label{4.13.1}
 \cR (\hat C^*)=P_{H^n\oplus H^n}\cR
(C^*(\l)).
\end{equation}
 Since in view of \eqref{3.13}  $\codim (H^n\oplus H^n)=\dim
(\cH'_0\oplus\cH'_1)<\infty$, it follows from \eqref{4.13.1} that  $\cR (\hat
C^*)$ is a closed subspace in $H^n\oplus H^n$. This implies that $\hat\cK(=\cR
(\hat C))$ is also closed.
\end{proof}
\begin{remark}\label{rem4.5}
In the case $n_{b+}=\infty(\Leftrightarrow\dim\cH_0'=\infty)$ one can easy
construct a quasi-constant (and even constant) Nevanlinna collection $\CD$ with
 not closed subspace $\hat\cK$, which implies that this collection is not
$N$-triangular with any $N$. Hence the condition $n_{b+}<\infty$ in Proposition
\ref{pr4.4a} is essential.
\end{remark}

Two $N$-triangular Nevanlinna collections $\{C(\cd),D(\cd)\}$ and $\{\wt
C(\cd), \wt D(\cd)\}$ (with the same $N$) are said to be equivalent if the
operator pairs $C(\cd)$ and $\wt C(\cd)$ as well as $D(\cd)$ and $\wt D(\cd)$
are equivalent in the sense of Definition \ref{def2.0}. It is clear that for a
given operator pair $N$ (see \eqref{4.1}) the set of all $N$-triangular
Nevanlinna collections falls into nonintersecting equivalence classes. In what
follows the set of all such classes will be denoted by $TR\{\cH_0,\cH_1\}$.
Moreover we will denote by $\cP=\{C(\cd),D(\cd)\}$ both an $N$-triangular
Nevanlinna collection and the corresponding equivalence class.

\begin{definition}\label{def4.3}
A collection (the corresponding equivalence class) $\cP=\{C(\cd),D(\cd)\}\in\;$
$TR\{\cH_0,\cH_1\}$ is said to belong to the class $\TRZ$ if it admits the
representation \eqref{4.6}, \eqref{4.7} as a constant $N$-triangular
collection.
\end{definition}
In the sequel we write $\cP=\{C_0,C_1\}\in\TRZ$ identifying the collection
$\cP\in\TRZ$ and the corresponding operator pair \eqref{4.6}, \eqref{4.7}.

In the case $\cH_0=\cH_1=:\cH$ (i.e., in the case of a decomposing boundary
triplet $\Pi=\bt$) we let $TR(\cH):=TR(\cH,\cH)$ and
$TR^0(\cH):=TR^0(\cH,\cH)$.
\begin{proposition}\label{pr4.4}
Assume that $N=(N_0\;\;N_1)$ is an operator pair \eqref{4.1} and $\CD\in\TR$ is
a collection \eqref{4.2}-\eqref{4.5}. Then the pair $N$ is symmetric and
\begin {equation}\label{4.10}
n\dim H \leq \dim \hat\cK\leq n_-(L_0);
\end{equation}
\end{proposition}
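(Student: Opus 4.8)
The plan is to prove both assertions by analyzing the structure imposed on the $N$-triangular collection through the relations \eqref{2.9}--\eqref{2.11} (equivalently \eqref{2.12a} in the constant case), combined with the dimension identities \eqref{3.13} and \eqref{2.14b}. First I would establish symmetry of the pair $N=(N_0\;\;N_1)$, i.e.\ that the linear relation $\t_N=\{(N_0,N_1);\hat\cK\}$ is symmetric. Recall that symmetry of $\t_N\in\C(H^n)$ amounts to $\t_N\subset\t_N^*$, which in terms of the defining operators means $N_1 N_0^*=N_0 N_1^*$ (self-adjointness of $N_1 N_0^*$ as an operator in $\hat\cK$). The idea is to extract this from the Nevanlinna compatibility conditions: the block structure \eqref{4.2}--\eqref{4.5} places $N_0,N_1$ in the $(1,1)$-corner acting into $\hat\cK$, and the lower-left corners are zero, so the self-adjointness-type relations coming from \eqref{2.12a} (in the constant case) or from the interface condition \eqref{2.11} together with the one-sided inequalities \eqref{2.9}, \eqref{2.10} (in the general case) should restrict to the $H^n\oplus H^n$ component and force exactly $\mathrm{Im}(N_1 N_0^*)=0$ on $\hat\cK$. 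Concretely I would compute $\hat C=C(\l)\up H^n\oplus H^n=(N_0\;\;N_1)$ as in \eqref{4.13}, feed it into the relevant quadratic form, and read off that the skew-symmetric part vanishes.

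For the inequalities \eqref{4.10} the plan is dimension counting. Since $N=(N_0\;\;N_1):H^n\oplus H^n\to\hat\cK$ is admissible, $\cR(N)=\hat\cK$, so $\dim\hat\cK=\mathrm{rk}(N_0\;\;N_1)\leq\dim(H^n\oplus H^n)$; but the symmetry just established means $\t_N$ is a symmetric relation in $H^n$, and a symmetric relation has $\dim\hat\cK\leq\dim H^n=n\dim H$ is the wrong direction — so instead the \emph{lower} bound $n\dim H\leq\dim\hat\cK$ must come from surjectivity/admissibility forcing $N$ to have full "half-rank." I would argue that symmetry of $\t_N$ gives $\dim\hat\cK\geq\dim H^n=n\dim H$ via the standard fact that for a symmetric operator pair the rank is at least the dimension of the underlying space (equivalently $\Ker N^*=\{0\}$, which follows because $(N_0\;\;N_1)$ surjective onto $\hat\cK$ together with the isotropy relation $N_1N_0^*=N_0N_1^*$ forces the adjoint to be injective). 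For the upper bound $\dim\hat\cK\leq n_-(L_0)$ I would use \eqref{2.14b}, namely $\dim\cK_1=\dim\cH_1$, together with $\hat\cK\subset\cK_1$ and $\dim\cH_1=n_-(L_0)$ from \eqref{3.13}; since $\hat\cK$ is a subspace of $\cK_1$, its dimension is at most $\dim\cK_1=\dim\cH_1=n_-(L_0)$.

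The main obstacle I anticipate is the \emph{symmetry} step in the genuinely non-constant, unequal-index case, where one does not have the clean algebraic identity \eqref{2.12a} but only the three separate conditions \eqref{2.9}--\eqref{2.11} holding on $\bC_+$ and $\bC_-$ respectively. There the argument must show that the constant corner $(N_0\;\;N_1)$, which is $\l$-independent by the definition of $N$-triangularity, inherits an \emph{equality} (not just an inequality) from conditions that are a priori only sign conditions. The trick will be to use the interface identity \eqref{2.11} evaluated on the $H^n$-blocks: since $\hat C_j(\l)=\hat D_j(z)\equiv N_j$ are constant, the cross term $C_1(\l)D_{01}^*(\ov\l)-C_{01}(\l)D_1^*(\ov\l)$ restricted to the $\hat\cK$-valued component reduces to $N_1N_0^*-N_0N_1^*=0$, with the $+iC_{02}D_{02}^*$ correction living entirely in the $\cK'$-component and hence not interfering. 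I would make this precise by writing out the block action explicitly and projecting onto $\hat\cK$; once the cross term is isolated, the vanishing is immediate and symmetry follows, after which the dimension inequalities are routine.
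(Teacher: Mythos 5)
Your symmetry step rests on a false equivalence, and this is a genuine gap. For the kernel parameterization $\t_N=\{\{h_0,h_1\}\in H^n\oplus H^n:\,N_0h_0+N_1h_1=0\}$ one has $\t_N^*=\{\{N_1^*g,\,-N_0^*g\}:\,g\in\hat\cK\}$, so the identity $N_1N_0^*=N_0N_1^*$ that you want to extract is equivalent to the \emph{reverse} inclusion $\t_N^*\subset\t_N$, not to symmetry $\t_N\subset\t_N^*$. The two are genuinely different here: whenever $\dim\hat\cK>n\dim H$ --- which \eqref{4.10} explicitly allows, and which happens e.g.\ for every invertible $N$ as in Corollaries \ref{cor4.14a} and \ref{cor4.17} --- the identity $N_1N_0^*=N_0N_1^*$ is impossible, since $\t_N^*\subset\t_N$ would force $\dim\hat\cK=\dim\t_N^*\leq\dim\t_N=\dim(H^n\oplus H^n)-\dim\hat\cK$. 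Concretely, if $0\in\rho(N)$ then $\t_N=\{0\}$ is trivially symmetric while $N_1N_0^*\neq N_0N_1^*$ necessarily. So the relation you plan to derive from \eqref{2.11} is generically false under the hypotheses of the proposition, and no correct computation can produce it; indeed your block computation already fails on its own terms, because the $\hat\cK\to\hat\cK$ corner of \eqref{2.11} also contains the terms $C_{11}'(\l)\bigl(D_{01}'(\ov\l)\up\cH_1'\bigr)^*$, $\bigl(C_{01}'(\l)\up\cH_1'\bigr)D_{11}'^*(\ov\l)$ and a nonzero corner of $iC_{02}(\l)D_{02}^*(\ov\l)$, none of which vanish in general. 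This in turn undermines your lower bound: $\Ker N^*=\{0\}$ follows from surjectivity of $N$ alone and yields no estimate from below, so the ``standard fact'' you appeal to is exactly what remains to be proved.

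The paper's proof goes differently and is two lines. The triangular structure \eqref{4.2}-\eqref{4.5} (zero lower-left blocks) gives $\t_N=\tau_\pm(\l)\cap(H^n\oplus H^n)$ for $\l\in\bC_\pm$; then the inequality \eqref{2.14a}, applied on \emph{both} half-planes and with $P_2h_0=0$ (because $H^n\subset\cH_1$), yields $Im\,(h_1,h_0)\geq 0$ and $Im\,(h_1,h_0)\leq 0$ simultaneously, hence $Im\,(h_1,h_0)=0$ on $\t_N$, i.e.\ $\t_N$ is symmetric --- no algebraic identity for $N_0,N_1$ is involved. So the correct object to restrict to $H^n\oplus H^n$ is the relation $\tau_\pm(\l)$ together with the two-sided sign condition, not the interface identity \eqref{2.11}. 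Symmetry then gives the lower bound: $\dim\t_N\leq\dim\t_N^*=\codim\t_N=\dim\hat\cK$, whence $2\dim H^n=\dim\t_N+\codim\t_N\leq 2\,\codim\t_N$ and $\dim H^n\leq\dim\hat\cK$. Your upper bound ($\hat\cK\subset\cK_1$, $\dim\cK_1=\dim\cH_1$ by \eqref{2.14b}, and $\dim\cH_1=n_-(L_0)$ by \eqref{3.13}) is correct and coincides with the paper's argument.
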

\begin{proof}
Let $\tau_\pm(\l)$ be linear relations \eqref{2.14}. Then in view of
\eqref{4.2}-\eqref{4.5} $\t_N=\tau_\pm(\l)\cap (H^n\oplus H^n), \;
\l\in\bC_\pm$ and \eqref{2.14a} shows that $\t_N$ is a symmetric linear
relation. Therefore $\dim H^n\leq \codim \t_N=\dim \hat\cK$, which together
with \eqref{2.14b} and the second relation in \eqref{3.13} gives \eqref{4.10}.
\end{proof}
\subsection{Generalized resolvents and the Green function}
Let $\Pi=\bta$ be a decomposing $D$-triplet \eqref{3.10} for $L$ and let $\cP
=\CD\in\TR$ be a collection \eqref{4.2}-\eqref{4.5}. Then the corresponding
boundary problem \eqref{3.27}-\eqref{3.29} can be written as
\begin{gather}
l[y]-\l y=f \label{4.16}\\
N_0 y^{(2)}(0)+N_1 y^{(1)}(0)+C_{01}'(\l)\G'_0 y- C_{11}'(\l)\G'_1
y=0,\label{4.17}\\
C_{02}'(\l)\G'_0 y- C_{12}'(\l)\G'_1 y=0, \qquad \l\in\bC_+;\label{4.18}\\
N_0 y^{(2)}(0)+N_1 y^{(1)}(0)+D_{01}'(\l)\G'_0 y- D_{11}'(\l)\G'_1
y=0,\label{4.19}\\
D_{02}'(\l)\G'_0 y- D_{12}'(\l)\G'_1 y=0, \qquad \l\in\bC_-.\label{4.20}
\end{gather}
Moreover in the case $\cP=\{C_0,C_1\}\in\TRZ$ (see \eqref{4.6} and \eqref{4.7})
the boundary conditions \eqref{4.17}-\eqref{4.20} take the form
\begin{gather}
N_0 y^{(2)}(0)+N_1 y^{(1)}(0)+C_{01}'\G'_0 y- C_{11}'\G'_1 y=0\label{4.21}\\
C_{02}'\G'_0 y- C_{12}'\G'_1 y=0\label{4.22}
\end{gather}
The following corollary is immediate from Theorem \ref{th3.5}.
\begin{corollary}\label{cor4.7}
Let $\Pi=\bta$ be a decomposing $D$-triplet \eqref{3.10} for $L$ and let
$\cP=\CD\in\TR$ be a collection given by \eqref{4.2}-\eqref{4.5}. Then the
boundary problem \eqref{4.16}-\eqref{4.20} generates the generalized resolvent
$\R=R_\cP (\l)$ of the operator $L_0$ (in the same way as in Theorem
\ref{th3.5}). Moreover $\R$ is a canonical resolvent if and only if
$\cP\in\TRZ$, in which case the corresponding boundary conditions can be
defined by \eqref{4.21} and \eqref{4.22}.
\end{corollary}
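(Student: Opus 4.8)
The plan is to recognize that an $N$-triangular Nevanlinna collection is merely a Nevanlinna collection of the special block shape already covered by Theorem~\ref{th3.5}, and then to rewrite the abstract conditions \eqref{3.28}, \eqref{3.29} in the concrete componentwise form \eqref{4.17}--\eqref{4.20}.

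First I would observe that for $\cP=\CD\in\TR$ the representations \eqref{4.2}--\eqref{4.5} are exactly \eqref{3.25}, \eqref{3.26} relative to the splitting $\cK_0=\hat\cK\oplus\cK'_0$, $\cK_1=\hat\cK\oplus\cK'_1$, the ``hat'' parts being $\hat C_0=(N_0\;\;0)^\top$, $\hat C_1=(N_1\;\;0)^\top$ and likewise for $\hat D_0,\hat D_1$. Since by Definition~\ref{def4.1} the collection $\cP$ is a Nevanlinna collection, the associated family $\pair$ belongs to $\RH$, and Theorem~\ref{th3.5}, 1) applies without change: for each $f\in\gH$ the problem \eqref{3.27}--\eqref{3.29} has a unique solution $y_f(\cd,\l)$ and $(R(\l)f)(t)=y_f(t,\l)$ defines a generalized resolvent $\R=R_\cP(\l)$ of $L_0$ (with characteristic matrix \eqref{3.24}).

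Second I would carry out the routine substitution of \eqref{4.2}--\eqref{4.5} into the compact conditions \eqref{3.29a}. Writing $\G_0 y=\{y^{(2)}(0),\G'_0 y\}$, $\G_1 y=\{-y^{(1)}(0),\G'_1 y\}$ and splitting $C_0(\l)\G_0 y-C_1(\l)\G_1 y=0$ along $\cK_0=\hat\cK\oplus\cK'_0$, the $\hat\cK$-component reproduces \eqref{4.17}, while the vanishing lower-left blocks of \eqref{4.2}, \eqref{4.3} make the $\cK'_0$-component independent of $y^{(1)}(0),y^{(2)}(0)$ and equal to \eqref{4.18}. The same splitting of the $\bC_-$-condition along $\cK_1=\hat\cK\oplus\cK'_1$ gives \eqref{4.19}, \eqref{4.20}. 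Thus \eqref{3.27}--\eqref{3.29} coincides with \eqref{4.16}--\eqref{4.20}, proving the first assertion.

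Finally, for the canonical part I would use the last sentence of Theorem~\ref{th3.5}: $\R=R_\cP(\l)$ is canonical if and only if $\pair\in\RZ$. If $\cP\in\TRZ$ then, by Definition~\ref{def4.3}, it has a constant $N$-triangular representative \eqref{4.6}, \eqref{4.7}, which is in particular a constant Nevanlinna collection; hence $\pair\in\RZ$, the resolvent is canonical, and \eqref{4.17}--\eqref{4.20} reduce to the constant-coefficient conditions \eqref{4.21}, \eqref{4.22}. The converse is the one point needing a little care, and is what I expect to be the (mild) main obstacle: if $\pair\in\RZ$ then $\tau_\pm\equiv\t$ is a constant pair with $\t\cap(H^n\oplus H^n)=\t_N$, so the restriction $\hat C=(\hat C_0\;\;\hat C_1)$ and $N=(N_0\;\;N_1)$ have the same kernel; multiplying the constant pair by a suitable holomorphic isomorphism (an equivalence in the sense of Definition~\ref{def2.0}) then brings it into the triangular normal form \eqref{4.7}, which shows $\cP\in\TRZ$ and closes the equivalence.
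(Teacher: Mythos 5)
Your proposal is correct and is essentially the paper's own argument: the paper merely observes that under the block representations \eqref{4.2}--\eqref{4.5} the abstract boundary problem \eqref{3.27}--\eqref{3.29} is exactly \eqref{4.16}--\eqref{4.20} (with \eqref{4.21}, \eqref{4.22} in the constant case) and then declares the corollary ``immediate from Theorem \ref{th3.5}''. The only place you go beyond the paper is in spelling out the converse implication (canonical resolvent $\Rightarrow \cP\in\TRZ$), which the paper treats as obvious; your sketch is sound, since the constant pair representing $\tau_\pm\equiv\t$ is equivalent to the $N$-triangular pair $C(\l)$, which in particular guarantees that its restriction $\hat C$ to $H^n\oplus H^n$ has closed range equal (up to an isomorphism) to $\hat\cK$, so the normalizing isomorphism you invoke does exist and preserves the relations \eqref{2.12a}.
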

\begin{remark}\label{rem4.7a}
Note that in view of Corollary \ref{cor4.7} the generalized resolvent $\R=
R_\cP (\l)$ can be also defined by $R_\cP (\l)=(\wt A(\l)-\l)^{-1}, \;\l\in\CR$
where $\wt A(\l)=L\up \cD (\wt A(\l))$ and $\cD (\wt A(\l))$ is the set of all
functions $y\in\cD$ satisfying the boundary conditions
\eqref{4.17}-\eqref{4.20} or, equivalently, \eqref{3.29a}.
\end{remark}
Assume that  $\cP=\CD\in\TR$ is a collection \eqref{4.2}-\eqref{4.5} and let
$\wt D_1(\l)(\in [\cH_1,\cK_1])$ and $\wt D_0(\l)(\in [\cH_0,\cK_1])$ be
defined by
\begin {equation*}
\wt D_1(\l):=D_0(\l)\up \cH_1, \;\;\;\; \wt D_0(\l)=D_1(\l) P_1+iD_0(\l)P_2,
\;\;\;\;\l\in\bC_-.
\end{equation*}
It follows from \eqref{4.4} and \eqref{4.5} that  the following block
representations hold
\begin{gather*}
\wt D_1(\l)=\begin{pmatrix} N_0 & \wt D'_{01}(\l)\cr 0 & \wt D'_{02}(\l)
\end{pmatrix} : H^n\oplus \cH'_1\to \hat\cK\oplus \cK'_1, \;\quad \l\in\bC_-
\\
\wt D_0(\l)=\begin{pmatrix} N_1 & \wt D'_{11}(\l)\cr 0 & \wt D'_{12}(\l)
\end{pmatrix} : H^n\oplus \cH'_0\to \hat\cK\oplus \cK'_1, \;\quad \l\in\bC_-.
\end{gather*}
\begin{proposition}\label{pr4.8}
Let the conditions of Corollary \ref{cor4.7} be satisfied. Then:

1) for each $\l\in\CR$ there exists the unique operator function
$v(\cd,\l)\in\LH{\hat\cK}$ satisfying the equation $l[y]-\l y=0$ and the
boundary conditions
\begin{gather}
(N_0 v^{(2)}(0,\l)+N_1 v^{(1)}(0,\l))\hat h+(C_{01}'(\l)\G'_0 -
C_{11}'\G'_1)(v(t,\l)\hat h)=\hat h \label{4.25}\\
(C_{02}'(\l)\G'_0 - C_{12}'(\l)\G'_1) (v(t,\l)\hat h)=0, \qquad \hat h\in\hat
\cK, \;\; \l\in\bC_+; \label{4.26}\\
(N_0 v^{(2)}(0,\l)+N_1 v^{(1)}(0,\l)) \hat h+ (D_{01}'(\l)\G'_0 -
D_{11}'(\l)\G'_1)(v(t,\l)\hat h)=\hat h\label{4.27}\\
(D_{02}'(\l)\G'_0 - D_{12}'(\l)\G'_1)(v(t,\l)\hat h) =0, \qquad \hat h\in\hat
\cK, \;\; \l\in\bC_-.\label{4.28}
\end{gather}
2) The  functions $v(\cd,\l)$ and $Z_\pm (\cd,\l)$ (see \eqref{3.17} and
\eqref{3.18}) are connected by
\begin {equation}\label{4.29}
v(t,\l)=\begin{cases} Z_+(t,\l)(C_0(\l)-C_1(\l)M_+(\l))^{-1}\up \hat\cK,
\;\;\;\l\in\bC_+\\
Z_-(t,\l)(\wt D_1(\l)-\wt D_0(\l)M_-(\l))^{-1}\up \hat\cK, \;\;\;\l\in\bC_-,
\end{cases}.
\end{equation}
where $M_\pm (\cd)$ are the Weyl functions \eqref{3.21} and \eqref{3.22} for
$\Pi$.

3) for each $\l\in\bC_+$ (resp. $\l\in\bC_-$) the equality $y(t)=v(t,\l)\hat h$
gives a bijective correspondence between all $\hat h\in\hat\cK$ and all
solutions $y(\cd) $ of the equation \eqref{3.4}, which belong to $\gH$ and
satisfy the boundary condition \eqref{4.18} (resp. \eqref{4.20}). Therefore the
operator function $v_\cP(\cd,\l)$ is a fundamental solution of the  boundary
problems  \eqref{3.4}, \eqref{4.18} for $\l\in\bC_+$ and  \eqref{3.4},
\eqref{4.20} for $\l\in\bC_-$(see \cite{Mog09.2,HolRof}).
\end{proposition}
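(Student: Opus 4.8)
The plan is to read off the boundary conditions \eqref{4.25}--\eqref{4.28} as a "normalized" version of the original conditions \eqref{4.17}--\eqref{4.20}, where the right-hand side $\hat h$ encodes the $\hat\cK$-component of the defect. First I would prove item 2), from which items 1) and 3) will follow. The key observation is that the operators $Z_\pm(\cd,\l)$ supplied by Proposition \ref{pr3.2a} are precisely the solutions of \eqref{3.4} realizing the boundary data $\G_0$ (resp. $P_1\G_0$), and that the Weyl functions $M_\pm(\cd)$ encode $\G_1$ (resp. $\G_1+iP_2\G_0$) on the defect subspaces $\gN_\l(L_0)$ via \eqref{2.18}, \eqref{2.19}. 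So for $\l\in\bC_+$ I would set $y(t)=Z_+(t,\l)g$ for an unknown $g\in\cH_0$ and compute that $\G_0 y=g$ and $\G_1 y=M_+(\l)g$ directly from \eqref{2.18} and \eqref{3.18a}. Substituting into the boundary condition, written in the compact form \eqref{3.29a} as $C_0(\l)\G_0 y-C_1(\l)\G_1 y=0$ with the additional inhomogeneity on the $\hat\cK$-component, gives $(C_0(\l)-C_1(\l)M_+(\l))g=\iota_{\hat\cK}\hat h$, where $\iota_{\hat\cK}$ is the embedding $\hat\cK\hookrightarrow\cK_0$. Inverting yields $g=(C_0(\l)-C_1(\l)M_+(\l))^{-1}\up\hat\cK\,\hat h$, and hence $v(t,\l)=Z_+(t,\l)(C_0(\l)-C_1(\l)M_+(\l))^{-1}\up\hat\cK$, which is exactly the first line of \eqref{4.29}. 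The case $\l\in\bC_-$ is handled symmetrically using $Z_-$, $M_-$ and the auxiliary operators $\wt D_0(\l),\wt D_1(\l)$ from the displayed block representations preceding the proposition; here one must track the extra term $iP_2\G_0$ in \eqref{2.19}, which is precisely why $\wt D_0$ and $\wt D_1$ are defined with the projectors $P_1,P_2$ as in \eqref{3.33}.

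Once \eqref{4.29} is established, item 1) reduces to two points: that $v(\cd,\l)$ lies in $\LH{\hat\cK}$ and that it is unique. Membership in $\LH{\hat\cK}$ follows because $Z_\pm(\cd,\l)\in\LH{\cH_0}$ (resp. $\LH{\cH_1}$) by Proposition \ref{pr3.2a} and the operator $(C_0(\l)-C_1(\l)M_+(\l))^{-1}\up\hat\cK$ is bounded with values in the appropriate space. For invertibility of $C_0(\l)-C_1(\l)M_+(\l)$ on the relevant range I would appeal to the fact that $\pair\in\RH$ together with the uniform strictness of $M(\cd)$ guarantees that the resolvent formula \eqref{3.34} in Proposition \ref{pr3.6} is well defined; in other words, this invertibility is already built into Theorem \ref{th3.5} and Proposition \ref{pr3.6}, where the same operator $(C_0(\l)-C_1(\l)M_+(\l))^{-1}$ appears in $\g_\tau(\l)$. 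Uniqueness then follows from the bijectivity of $\G_0\up\gN_\l(L_0)$ (resp. $P_1\G_0\up\gN_z(L_0)$), which means any solution of \eqref{3.4} in $\gH$ is of the form $Z_\pm(t,\l)g$ for a unique $g$, so the boundary conditions pin down $g$ uniquely.

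Item 3) is the description of the solution space: I would show that for fixed $\l\in\bC_+$ the map $\hat h\mapsto v(t,\l)\hat h$ is a bijection from $\hat\cK$ onto the set of $\gH$-solutions of \eqref{3.4} satisfying the single homogeneous condition \eqref{4.18} (the second, "lower", block condition), while dropping the first, inhomogeneous block. The point is that the full space of $\gH$-solutions of \eqref{3.4} is parameterized by $g\in\cH_0$ through $Z_+(t,\l)g$, and imposing only \eqref{4.18} — i.e. $C_{02}'(\l)\G_0'y-C_{12}'(\l)\G_1'y=0$ — cuts this down to a subspace on which, after applying $(C_0(\l)-C_1(\l)M_+(\l))^{-1}$, the first block reads off $\hat h\in\hat\cK$ freely. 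Injectivity comes from uniqueness in item 1), and surjectivity from the fact that \eqref{4.18} is exactly the complementary condition to \eqref{4.17}. The case $\l\in\bC_-$ with \eqref{4.20} is analogous.

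I expect the main obstacle to be the careful bookkeeping in the lower half-plane: the $\times$-adjoint structure means the natural boundary data there is $\G_1+iP_2\G_0$ rather than $\G_1$, so the substitution $y=Z_-(t,z)g$ produces the combination governed by \eqref{2.19}, and one must verify that the definitions \eqref{3.33} of $\wt D_0,\wt D_1$ are precisely engineered so that $\wt D_1(\l)-\wt D_0(\l)M_-(\l)$ plays the role that $C_0(\l)-C_1(\l)M_+(\l)$ plays in $\bC_+$. Matching these two expressions across the real axis, and confirming that the resulting $v(\cd,\l)$ is a single operator function holomorphic (piecewise) on $\CR$ with the asserted fundamental-solution property, is where the delicate verification lies; the upper-half-plane argument, by contrast, is essentially a direct substitution.
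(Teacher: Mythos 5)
Your proposal is correct and follows essentially the same route as the paper's proof: both reduce the block conditions \eqref{4.25}--\eqref{4.28} to the compact form $(C_0(\l)\G_0-C_1(\l)\G_1)(v(t,\l)\hat h)=\hat h$ (and its $\bC_-$ analogue), apply the identities $(C_0(\l)\G_0-C_1(\l)\G_1)(Z_+(t,\l)h)=(C_0(\l)-C_1(\l)M_+(\l))h$ and $(D_0(\l)\G_0-D_1(\l)\G_1)(Z_-(t,\l)h)=(\wt D_1(\l)-\wt D_0(\l)M_-(\l))h$ together with the bounded invertibility of these operators to obtain \eqref{4.29}, and prove 3) by writing an arbitrary $\gH$-solution satisfying \eqref{4.18} as $Z_+(t,\l)(C_0(\l)-C_1(\l)M_+(\l))^{-1}\{\hat h,h'\}$ and observing that \eqref{4.18} forces $h'=0$. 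The only deviations are minor: the paper cites the two identities and the invertibility from \cite{Mog10}, whereas you re-derive the identities directly from \eqref{2.18}, \eqref{2.19} and \eqref{3.18a} (correctly tracking the $iP_2\G_0$ term that motivates \eqref{3.33}); and the paper deduces uniqueness from $\l\in\rho(\wt A(\l))$ (Remark \ref{rem4.7a}), whereas you deduce it from the bijectivity of $\g_\pm(\l)$ onto the defect subspace plus the invertibility of $C_0(\l)-C_1(\l)M_+(\l)$ --- both are sound.
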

\begin{proof}
1)-2) It follows from \eqref{3.10} and \eqref{4.2}-\eqref{4.5} that the
conditions \eqref{4.25}-\eqref{4.28} are equivalent to
\begin{gather}
(C_0(\l)\G_0-C_1(\l)\G_1)(v(t,\l)\hat h)=\hat h, \qquad \hat h\in\hat \cK, \;\;
\l\in\bC_+\label{4.30}\\
(D_0(\l)\G_0-D_1(\l)\G_1)(v(t,\l)\hat h)=\hat h, \qquad \hat h\in\hat \cK, \;\;
\l\in\bC_-.\label{4.31}
\end{gather}
As was shown in \cite{Mog10} $0\in\rho (C_0(\l)-C_1(\l)M_+(\l)), \; 0\in\rho
(\wt D_1(\l)-\wt D_0(\l)M_-(\l))$  and
\begin{gather}
(C_0(\l)\G_0-C_1(\l)\G_1)(Z_+(t,\l) h)=(C_0(\l)-C_1(\l)M_+(\l))h, \;\;\;\;
h\in\cH_0, \;\l\in\bC_+ \label{4.32}\\
(D_0(\l)\G_0-D_1(\l)\G_1)(Z_-(t,\l) h)=(\wt D_1(\l)-\wt D_0(\l)M_-(\l))h, \;
h\in\cH_1, \;\;\l\in\bC_-.\label{4.33}
\end{gather}
Hence the equality \eqref{4.29} correctly defines the  function
$v(\cd,\l)\in\LH{\hat\cK}$ satisfying \eqref{4.30}, \eqref{4.31} and
consequently \eqref{4.25}-\eqref{4.28}. The uniqueness of such a function
follows from the inclusion $\l\in\rho (\wt A(\l))$, where $\wt A(\l)$ is
defined in Remark \ref{rem4.7a}.

3) If $\hat h\in\hat \cK$, then by the statement 1) the function $y(t)=v(t,\l)
\hat h$ satisfies the equation \eqref{3.4} and the conditions \eqref{4.18},
\eqref{4.20}. Conversely let $\l\in\bC_+$ and a function $y\in\cD$ satisfies
\eqref{3.4} and \eqref{4.18}. Then there exists $h=\{\hat h, h'\}\in
\hat\cK\oplus \cK'$ such that $y=Z_+(t,\l)(C_0(\l)-C_1(\l)M_+(\l))^{-1}h$ and
by \eqref{4.32} one has $(C_0(\l)\G_0-C_1(\l)\G_1)y=h$. Therefore in view of
\eqref{4.18} $h'=0$, so that $h=\hat h\in\hat\cK$ and by \eqref{4.29}
$y=v(t,\l) \hat h$. Similarly by using \eqref{4.33} one proves the same
statement in the case $\l\in\bC_-$.
\end{proof}
\begin{remark}\label{rem4.8a}
One can easily verify that for a given operator pair $N=(N_0\;\;N_1)$ the
operator function  $v(\cd,\l)$ is uniquely defined by the equivalence class
$\cP\in\TR$, i.e., $v(\cd,\l)$ does not depend on the choice of an
$N$-triangular Nevanlinna collection \eqref{4.2}-\eqref{4.5} inside the
equivalence class. To emphasize this fact we will write
$v(\cd,\l)=v_\cP(\cd,\l)$.
\end{remark}
\begin{theorem}\label{th4.9}
Assume that  $\Pi=\bta$ is a decomposing $D$-triplet \eqref{3.10} for $L$,
$\cP=\CD\in\TR$ is a collection  \eqref{4.2}-\eqref{4.5} and
$\f_N(\cd,\l):\D\to [\hat\cK,H], \;\l\in\bC$ is the operator solution of
\eqref{3.4} with the initial data
\begin{gather}\label{4.34}
\f_N^{(1)}(0,\l)=-N_0^*, \quad \f_N^{(2)}(0,\l)=N_1^*, \quad \l\in\bC.
\end{gather}
Then the generalized resolvent $\R=R_\cP(\l)$ generated by the boundary problem
\eqref{4.16}-\eqref{4.20} admits the representation \eqref{3.30} with the Green
function $G(x,t,\l)=G_\cP (x,t,\l)$ given by
\begin{gather}\label{4.36}
G_\cP (x,t,\l)=\begin{cases} v_\cP(x,\l)\f_N^*(t,\ov\l), \quad x>t \cr
\f_N(x,\l)v_\cP^*(t,\ov\l), \quad x<t \end{cases}, \quad \l\in\CR.
\end{gather}
\begin{proof}
Let $\pair\in\RH$ be a collection given by \eqref{2.14} and \eqref{3.25},
\eqref{3.26}, and let $Y_+(\cd,\l):\D\to [\cK_1,H], \;\l\in\bC_+,$ and
$Y_-(\cd,z):\D\to [\cK_0,H], \;z\in\bC_-,$ be the operator solutions of the
equation \eqref{3.4} with the initial data
\begin{gather}\label{4.37}
\wt Y_+(0,\l)=(-\hat D_0^*(\ov\l)\;\; \hat D_1^*(\ov\l))^\top, \;\;\;\; \wt
Y_-(0,z)=(-\hat C_0^*(\ov z)\;\; \hat C_1^*(\ov z ))^\top.
\end{gather}
Assume also that $\cZ_+(\cd,\l)\in \LH{\cK_0}$ and $\cZ_-(\cd,z)\in \LH{\cK_1}$
are  given by
\begin{gather}
\cZ_+(t,\l)=Z_+(t,\l)(C_0(\l)-C_1(\l)M_+(\l))^{-1},\;\;\l\in\bC_+\label{4.38}\\
\cZ_-(t,z)=Z_-(t,z)(\wt D_1(z)-\wt D_0(z)M_-(z))^{-1},
\;\;z\in\bC_-\label{4.38a}
\end{gather}
and let
\begin{gather*}
Y(t,\l)=\begin{cases} Y_+(t,\l),  \;\;\l\in\bC_+ \cr Y_-(t,\l),
\;\;\l\in\bC_-\end{cases}; \qquad \cZ (t,\l)=\begin{cases} \cZ_+(t,\l),
\;\;\l\in\bC_+ \cr \cZ_-(t,\l), \;\;\l\in\bC_-\end{cases}.
\end{gather*}
Then according to Theorem 14 in \cite{Mog10} the Green function in \eqref{3.30}
is
\begin{gather}\label{4.39}
G(x,t,\l)=\begin{cases} \cZ (x,\l) Y^*(t,\ov\l), \quad x>t \cr
Y(x,\l)\cZ^*(t,\ov\l), \quad x<t \end{cases}, \quad \l\in\CR.
\end{gather}
Next,  in the case of the block representations \eqref{4.2}-\eqref{4.5} one has
\begin{gather*}
\hat C_j(\l)=(N_j \;\; 0)^\top \in [H^n,\hat\cK\oplus \cK'_0], \quad
 \hat D_j(\l)=(N_j \;\; 0)^\top\in [H^n,\hat\cK\oplus \cK'_1], \;\;
j\in\{0,1\}.
\end{gather*}
Therefore the initial data \eqref{4.37} can be written in the form $\wt Y_+
(0,\l)=\begin{pmatrix} -N_0^* & 0 \cr N_1^* & 0\end{pmatrix}\in
[\hat\cK\oplus\cK_1',H^n\oplus H^n],\; \wt Y_- (0,z)=\begin{pmatrix} -N_0^* & 0
\cr N_1^* & 0\end{pmatrix}\in [\hat\cK\oplus\cK_0',H^n\oplus H^n]$, which in
view of \eqref{4.34} gives the block representations
\begin{gather}\label{4.40}
Y_+(t,\l)=(\f_N(t,\l)\;\;\;0):\hat\cK\oplus \cK_1'\to H,\;\;\;\;
Y_-(t,z)=(\f_N(t,z)\;\;\;0):\hat\cK\oplus \cK_0'\to H.
\end{gather}
Moreover by \eqref{4.29} the operator functions \eqref{4.38} have the block
 representations
\begin{gather}\label{4.41}
\cZ_+(t,\l)=(v_\cP(t,\l)\;\;\; u_+(t,\l)),\quad \cZ_-(t,z)=(v_\cP(t,z)\;\;\;
u_+(t,z))
\end{gather}
with some operator functions $u_+ (t,\l)$ and $u_-(t,z)$. Now combining
\eqref{4.40} and \eqref{4.41} with \eqref{4.39} we arrive at the equality
\eqref{4.36}.
\end{proof}
\end{theorem}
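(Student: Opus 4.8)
The plan is to invoke the general representation of the Green function for Nevanlinna-type boundary problems established in Theorem~14 of \cite{Mog10}, which writes $G(x,t,\l)$ as a product $\cZ(x,\l)Y^*(t,\ov\l)$ for $x>t$ (and its transpose for $x<t$) of two operator solutions of \eqref{3.4}, and then to specialize this to the $N$-triangular setting. Here $Y=Y_\pm$ is built from the constant parts $\hat C_j(\l),\hat D_j(\l)$ of the collection via the initial data $\wt Y_+(0,\l)=(-\hat D_0^*(\ov\l)\;\;\hat D_1^*(\ov\l))^\top$ (and symmetrically for $Y_-$), while $\cZ=\cZ_\pm$ is obtained from the $\g$-field solutions $Z_\pm$ of \eqref{3.17}, \eqref{3.18} post-composed with the invertible factors $(C_0(\l)-C_1(\l)M_+(\l))^{-1}$ and $(\wt D_1(\l)-\wt D_0(\l)M_-(\l))^{-1}$. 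The goal is to show that in the $N$-triangular case both families acquire a block form that makes the product collapse onto \eqref{4.36}.

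First I would exploit the defining block structure \eqref{4.2}--\eqref{4.5}: the constant parts are $\hat C_j(\l)=\hat D_j(\l)=(N_j\;\;0)^\top$ as operators $H^n\to\hat\cK\oplus\cK'$. Consequently the initial data for $Y_\pm$ reduce to $\wt Y_\pm(0)=\begin{pmatrix}-N_0^* & 0\cr N_1^* & 0\end{pmatrix}$, whose second block-column vanishes. Comparing the first block-column with the data $\f_N^{(1)}(0,\l)=-N_0^*$, $\f_N^{(2)}(0,\l)=N_1^*$ from \eqref{4.34} and using uniqueness of solutions of \eqref{3.4}, I would conclude $Y_\pm(t,\l)=(\f_N(t,\l)\;\;0)$, the zero block persisting in $t$ precisely because its initial data vanish identically.

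Next I would read off the block structure of $\cZ_\pm$. By \eqref{4.29} together with Proposition~\ref{pr4.8}, the restriction of $\cZ_\pm$ to $\hat\cK$ is precisely the fundamental solution $v_\cP$, so that $\cZ_\pm(t,\l)=(v_\cP(t,\l)\;\;u_\pm(t,\l))$ for suitable operator functions $u_\pm$. Substituting both block forms into the product $\cZ(x,\l)Y^*(t,\ov\l)$, the $u_\pm$-components are annihilated against the zero block of $Y^*$, leaving $v_\cP(x,\l)\f_N^*(t,\ov\l)$ for $x>t$ and, symmetrically, $\f_N(x,\l)v_\cP^*(t,\ov\l)$ for $x<t$; this is exactly \eqref{4.36}.

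The main obstacle is the careful bookkeeping of the two block reductions. For $Y_\pm$ the point is to verify that the vanishing second block-column of the initial data forces the corresponding solution component to vanish for all $t$, which follows from uniqueness of the initial value problem for \eqref{3.4}. For $\cZ_\pm$ the delicate step is to confirm, via \eqref{4.29}, that $v_\cP$ is genuinely the $\hat\cK$-component of $\cZ_\pm$ rather than a mixed object, so that the remaining component $u_\pm$ decouples cleanly. Once these two structural facts are secured, the final matrix multiplication that produces \eqref{4.36} is entirely routine.
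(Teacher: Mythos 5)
Your proposal is correct and follows essentially the same route as the paper's own proof: both invoke Theorem~14 of \cite{Mog10} for the general factorized Green function $G(x,t,\l)=\cZ(x,\l)Y^*(t,\ov\l)$ (for $x>t$), then use the $N$-triangular block structure \eqref{4.2}--\eqref{4.5} to reduce the initial data of $Y_\pm$ to $(\f_N\;\;0)$ and the equality \eqref{4.29} to identify the $\hat\cK$-component of $\cZ_\pm$ with $v_\cP$, after which the product collapses to \eqref{4.36}. The only cosmetic difference is that you make explicit the uniqueness argument for the vanishing block of $Y_\pm$, which the paper leaves implicit.
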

\subsection{$m$-functions}  Let $\Pi=\bta$ be a decomposing $D$-triplet \eqref{3.10} for $L$
 and let $N=(N_0\;\;N_1)$ be an
admissible operator pair \eqref{4.1}. Since $\cR (N)=\hat\cK$, it follows that
$\Ker N^*=\{0\}$ and $\cR (N^*)$ is a closed subspace in $H^n\oplus H^n$.
Therefore there exists a Hilbert space $\hat\cK^\perp$ and operators $T_j\in
[H^n,\hat\cK^\perp], \;j\in\{0,1\}$, such that the operator
\begin{gather}\label{4.42}
W'=\begin{pmatrix} -N_0^* & -T_0^* \cr N_1^* & T_1^* \end{pmatrix}:\hat
\cK\oplus\hat\cK^\perp\to \HH
\end{gather}
is an isomorphism.

Next assume that $W'$ is an isomorphism \eqref{4.42} and let
$Y_{W'}(\cd,\l)(\in [\hat \cK\oplus\hat\cK^\perp,H])$ be the operator solution
of the equation \eqref{3.4} such that $\wt Y_{W'} (0,\l)=W'$. Then
\begin{gather}\label{4.43}
Y_{W'}(t,\l)=(\f_N(t,\l) \;\;\;\f_T(t,\l)):\hat \cK\oplus\hat\cK^\perp\to H,
\quad\l\in\bC,
\end{gather}
where $\f_T(\cd,\l):\D\to [\hat\cK^\perp,H]$ is the operator solution of
\eqref{3.4} given by \eqref{4.34} with $T$ in place of $N$. Introduce also the
operator $ \cJ_{W'}=(W')^{-1}J_{H^n}(W')^{-1*}(\in [\hat
\cK\oplus\hat\cK^\perp]) $ where $J_{H^n}$ is the operator \eqref{3.30a}. Since
$\cJ_{W'}^*=-\cJ_{W'}$, the operator $\cJ_{W'}$ has the block  representation
\begin{gather}\label{4.47}
\cJ_{W'}=\begin{pmatrix}\cJ_1 & -\cJ_2^* \cr \cJ_2 & \cJ_4 \end{pmatrix}:\hat
\cK\oplus\hat \cK^\perp \to \hat\cK\oplus\hat \cK^\perp
\end{gather}
with $\cJ_1=-\cJ_1^*$ and $\cJ_4=-\cJ_4^*$.
\begin{theorem}\label{th4.11}
Assume that the following assumptions (a) are satisfied:

(a) $\;\Pi=\bta$ is a decomposing $D$-triplet \eqref{3.10} for $L$, $N=(N_0\;\;
N_1)$ is an operator pair \eqref{4.1},  $\cP=\CD\in\TR$ is a collection of
holomorphic pairs  \eqref{4.2}-\eqref{4.5}, $\pair\in\RH$ is the corresponding
collection \eqref{2.14} and $\Om_\tau(\cd)$ is the characteristic matrix
\eqref{3.24}.

Moreover, let $W'$ be an isomorphism \eqref{4.42} and let
$\Om_{\tau,W'}(\cd):\CR\to [\hat \cK\oplus\hat \cK^\perp]$ be the operator
function given by
\begin{gather}\label{4.48}
\Om_{\tau,W'}(\l)=(W')^{-1}\Om_\tau(\l)(W')^{-1*}, \quad \l\in\CR.
\end{gather}
Then: 1) The Green function \eqref{4.36} admits the representation
\begin{gather}\label{4.49}
G_\cP (x,t,\l)=Y_{W'}(x,\l)(\Om_{\tau,W'}(\l)+\tfrac 1 2 \text{sign}
(t-x)\cJ_{W'}) )Y_{W'}^*(t,\ov\l);
\end{gather}

2) The operator function \eqref{4.48} has the block  representation
\begin{gather}\label{4.50}
\Om_{\tau,W'}(\l)=\begin{pmatrix} m_\cP(\l) & -\tfrac 1 2 \cJ_2^*\cr -\tfrac 1
2 \cJ_2 & 0\end{pmatrix}:\hat\cK\oplus\hat \cK^\perp\to \hat\cK\oplus\hat
\cK^\perp, \quad \l\in\CR;
\end{gather}
3) The equality \eqref{4.50} generates the holomorphic operator function
$m_\cP(\cd):\CR\to [\hat\cK]$ which can be also defined by the following
statement:

(i) there exists a unique operator function $m_\cP(\cd):\CR\to [\hat\cK]$ such
that for every $\l\in\CR$ the operator function
\begin{gather}\label{4.51}
v(t,\l):=\f_N(t,\l)(m_\cP(\l)-\tfrac 1 2 \cJ_1)-\f_T(t,\l)\cJ_2
\end{gather}
belongs to $\LH{\hat\cK}$ and satisfies the boundary conditions
\eqref{4.25}-\eqref{4.28}.
\end{theorem}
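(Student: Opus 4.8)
The plan is to deduce all three statements from the two available representations of the Green function $G_\cP$: the general one, eq.~\eqref{3.31}, and the explicit one of Theorem~\ref{th4.9}, eq.~\eqref{4.36}. I would prove 1) first. By Corollary~\ref{cor4.7} together with Theorem~\ref{th3.5}, the problem \eqref{4.16}--\eqref{4.20} generates the generalized resolvent $R_\cP(\l)$ whose characteristic matrix is $\Om_\tau(\l)$, so its Green function is given by \eqref{3.31} in terms of the canonical solution $Y_0(\cd,\l)$ with $\wt Y_0(0,\l)=I_{\HH}$. Since $\wt{(Y_0(\cd,\l)W')}(0,\l)=\wt Y_0(0,\l)W'=W'=\wt Y_{W'}(0,\l)$, uniqueness of the initial value problem gives $Y_{W'}(\cd,\l)=Y_0(\cd,\l)W'$, hence $Y_0(\cd,\l)=Y_{W'}(\cd,\l)(W')^{-1}$. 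Substituting this into \eqref{3.31} and using \eqref{4.48} and $\cJ_{W'}=(W')^{-1}J_{H^n}(W')^{-1*}$ turns \eqref{3.31} into \eqref{4.49}; this is the only computation needed for 1).

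For 2) and 3) I would compare \eqref{4.49} with \eqref{4.36}. Expanding $v_\cP(\cd,\l)$ in the fundamental system $Y_{W'}=(\f_N\;\;\f_T)$ of \eqref{4.43} produces uniquely determined $A(\l)\in[\hat\cK]$ and $B(\l)\in[\hat\cK,\hat\cK^\perp]$ with $v_\cP(t,\l)=\f_N(t,\l)A(\l)+\f_T(t,\l)B(\l)=Y_{W'}(t,\l)(A(\l)\;\;B(\l))^\top$. Using $\f_N=Y_{W'}(\cd,\l)\up\hat\cK$ one rewrites each branch of \eqref{4.36} in the sandwiched form $Y_{W'}(x,\l)\,X\,Y_{W'}^*(t,\ov\l)$: for $x>t$ with $X=\left(\begin{smallmatrix} A & 0\cr B & 0\end{smallmatrix}\right)$, and for $x<t$ with $X=\left(\begin{smallmatrix} A(\ov\l)^* & B(\ov\l)^*\cr 0 & 0\end{smallmatrix}\right)$. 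The lemma I would isolate is that the map $X\mapsto Y_{W'}(\cd,\l)X Y_{W'}^*(\cd,\ov\l)$ is injective: if the product vanishes identically, then passing to quasi-derivatives at $0$ and invoking invertibility of $W'=\wt Y_{W'}(0,\l)$ twice (once in $x$, once in $t$) forces $X=0$.

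Applying this injectivity to the comparison of \eqref{4.36} with \eqref{4.49} yields the two operator identities $\Om_{\tau,W'}(\l)-\tfrac12\cJ_{W'}=\left(\begin{smallmatrix} A & 0\cr B & 0\end{smallmatrix}\right)$ and $\Om_{\tau,W'}(\l)+\tfrac12\cJ_{W'}=\left(\begin{smallmatrix} A(\ov\l)^* & B(\ov\l)^*\cr 0 & 0\end{smallmatrix}\right)$. Reading off the $(2,2)$-blocks forces $\Om_{22}=0$ and $\cJ_4=0$; the $(1,2)$- and $(2,1)$-blocks give $\Om_{12}=-\tfrac12\cJ_2^*$ and $\Om_{21}=-\tfrac12\cJ_2$, so that with $m_\cP(\l):=\Om_{11}(\l)$ one gets \eqref{4.50}, holomorphy of $m_\cP$ being inherited from $\Om_\tau$ through \eqref{4.48}. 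The $(1,1)$- and $(2,1)$-blocks of the first identity give $A(\l)=m_\cP(\l)-\tfrac12\cJ_1$ and $B(\l)=-\cJ_2$, and substituting back into the expansion of $v_\cP$ produces exactly \eqref{4.51} with $v(\cd,\l)=v_\cP(\cd,\l)\in\LH{\hat\cK}$ satisfying \eqref{4.25}--\eqref{4.28} by Proposition~\ref{pr4.8}. Uniqueness in (i) then follows: any $\wt m(\l)$ for which the function \eqref{4.51} lies in $\LH{\hat\cK}$ and solves \eqref{4.25}--\eqref{4.28} yields, by the uniqueness clause of Proposition~\ref{pr4.8}, the same function $v_\cP$, whence $\f_N(t,\l)(\wt m-m_\cP)\equiv0$ and $\wt m=m_\cP$, since the initial data $(-N_0^*\;\;N_1^*)^\top$ of $\f_N$ is injective ($\Ker N^*=\{0\}$).

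The step I expect to be the main obstacle is seeing that $\cJ_4=0$ is forced rather than accidental. It drops out of the two branches above, but to confirm consistency one must use the symmetry of the pair $N$ established in Proposition~\ref{pr4.4}: it is precisely this symmetry that makes the $(2,2)$-block of $\cJ_{W'}=(W')^{-1}J_{H^n}(W')^{-1*}$ vanish, via the identity $\dim\Ker(N_0N_1^*-N_1N_0^*)=\dim\hat\cK^\perp$. Careful block bookkeeping for possibly unequal $\dim\hat\cK$ and $\dim\hat\cK^\perp$ (so that the relevant operators are genuinely operators, not square matrices), and tracking which block-row or block-column collapses in each branch, is the part demanding the most attention.
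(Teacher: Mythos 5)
Your proposal is correct, and its engine is the same as the paper's: both proofs compare the representation \eqref{4.36} of $G_\cP$ with \eqref{4.49} and strip off the outer factors using invertibility of the quasi-derivative matrices of $Y_{W'}$, turning a kernel identity on a half-region into an operator identity in $[\hat\cK\oplus\hat\cK^\perp]$ (one point of care, common to both arguments: the identity holds only on $\{(x,t):x>t\}$, resp.\ $\{(x,t):x<t\}$, so one must strip first in $t$ for each fixed $x$, and only then in $x$ — which is the order you describe). The divergence is in the endgame. The paper uses only the branch $x>t$, obtaining $(v_\cP(x,\l)\;\;0)=Y_{W'}(x,\l)\bigl(\Om_{\tau,W'}(\l)-\tfrac 1 2 \cJ_{W'}\bigr)$ (its \eqref{4.53}), reads off \eqref{4.55}, \eqref{4.56}, and then must invoke the Hermitian symmetry $\Om_{\tau,W'}(\l)=\Om_{\tau,W'}^*(\ov\l)$ together with $\cJ_4^*=-\cJ_4$ to force $\cJ_4=0$, $\Om_4(\l)=0$ and $\Om_2(\l)=-\tfrac 1 2 \cJ_2$ (its \eqref{4.57}). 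You use both branches, and adding/subtracting your two identities pins down all blocks at once, the symmetry of $\Om_\tau$ being encoded automatically in the $x<t$ branch; in exchange you apply the stripping lemma twice. Both routes are sound and of comparable length. You also spell out the uniqueness in 3)(i) — the uniqueness clause of Proposition \ref{pr4.8} plus $\Ker N^*=\{0\}$, so that $\f_N(t,\l)(\wt m(\l)-m_\cP(\l))\equiv 0$ forces $\wt m=m_\cP$ — which the paper compresses into a single sentence; that detail is worth keeping.

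One thing to delete is the closing worry that Proposition \ref{pr4.4} is needed to ``confirm'' that $\cJ_4=0$. No such input is required: $\cJ_4=0$ is forced by your two identities exactly as you derived it (and, in the paper, by the symmetry of $\Om_\tau$ together with skew-adjointness of $\cJ_{W'}$), and the dimension identity you quote is not the correct formulation of the link with the pair $N$. The actual link runs the other way and is automatic: for $u,v\in\hat\cK^\perp$ one has $(\cJ_4u,v)=(J_{H^n}x,y)$ with $x=(W')^{-1*}u$, $y=(W')^{-1*}v$, and as $u,v$ range over $\hat\cK^\perp$ these vectors range exactly over $\Ker N'$; hence $\cJ_4=0$ is equivalent to the vanishing of $(J_{H^n}x,y)$ on $\Ker N'$, which is precisely the symmetry of the relation $\t_N$. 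So Proposition \ref{pr4.4} is consistent with part 2) of the theorem rather than a prerequisite for it, and no separate consistency check is needed.
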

\begin{proof}
1) The representation \eqref{4.49} is immediate from \eqref{3.31} and the
obvious equality $Y_0(t,\l)=Y_{W'}(t,\l)\,(W')^{-1},\;\l\in\bC$.

2) Let $v_\cP(\cd,\l)$ be the operator function defined in Proposition
\ref{pr4.8} and let
\begin{gather*}
u(x,\l)=(v_\cP(x,\l)\;\;0):\hat\cK\oplus\hat \cK^\perp\to H, \quad\l\in\CR.
\end{gather*}
Comparing \eqref{4.36} with \eqref{4.49} one obtains
\begin{gather}\label{4.52}
u(x,\l) Y_{W'}^*(t,\ov\l)=Y_{W'}(x,\l)(\Om_{\tau,W'}(\l)-\tfrac 1 2 \cJ_{W'})
Y_{W'}^*(t,\ov\l), \quad x>t
\end{gather}
for all $\l\in\CR$. Since $0\in\rho (\wt Y_{W'}(t,\ov\l))$, it follows from
\eqref{4.52} that
\begin {equation}\label{4.53}
u(x,\l)=Y_{W'}(x,\l)(\Om_{\tau,W'}(\l)-\tfrac 1 2 \cJ_{W'}), \quad
x\in\D,\;\;\l\in \CR.
\end{equation}
Next assume that the block  representation of the operator function
$\Om_{\tau,W'}(\l)$ is
\begin {equation}\label{4.54}
\Om_{\tau,W'}(\l)=\begin{pmatrix}m_\cP(\l) & \Om_3(\l) \cr \Om_2(\l) &
\Om_4(\l)\end{pmatrix}:\hat\cK\oplus\hat \cK^\perp\to \hat\cK\oplus\hat
\cK^\perp, \quad \l\in\CR.
\end{equation}
 Then the equality \eqref{4.53} can be written as
\begin {equation*}
(v_\cP (x,\l)\;\;0)=(\f_N(x,\l)\;\;\f_T(x,\l))\begin{pmatrix}m_\cP(\l)-\tfrac 1
2 \cJ_1 & \Om_3(\l)+\tfrac 1 2 \cJ_2^* \cr \Om_2(\l)-\tfrac 1 2 \cJ_2 &
\Om_4(\l)-\tfrac 1 2 \cJ_4 \end{pmatrix},
\end{equation*}
which implies the relations
\begin{gather}
v_\cP(x,\l)=\f_N(x,\l)(m_\cP(\l)-\tfrac 1 2 \cJ_1 )+\f_T(x,\l)(\Om_2(\l)-\tfrac
1 2
\cJ_2)\label{4.55}\\
\Om_3(\l)+\tfrac 1 2 \cJ_2^*=0,\quad \Om_4(\l)-\tfrac 1 2 \cJ_4=0, \quad
\l\in\CR.\label{4.56}
\end{gather}
Since $\Om_\tau(\l)=\Om_\tau^*(\ov\l)$, it follows from \eqref{4.48} that
$\Om_{\tau,W'}(\l)=\Om_{\tau,W'}^*(\ov\l)$ and by \eqref{4.54} one has
$\Om_2(\l)=\Om_3^*(\ov\l), \; \;\Om_4(\l)=\Om_4^*(\ov\l)$. Combining these
relations with \eqref{4.56} and taking the equality $\cJ_4=-\cJ_4^*$ into
account one obtains
\begin {equation}\label{4.57}
\Om_3(\l)=-\tfrac 1 2 \cJ_2^*, \quad \Om_2(\l)=-\tfrac 1 2 \cJ_2, \quad
\Om_4(\l)=0, \quad \l\in\CR.
\end{equation}
Therefore the block matrix representation \eqref{4.54} takes the
form\eqref{4.50}.

3) In view of \eqref{4.55} and the second equality in \eqref{4.57}  the
function $v(\cd,\l)=v_\cP(\cd,\l)$ admits the representation \eqref{4.51}. This
and Proposition \ref{pr4.8} give the statement 3).
\end{proof}
\begin{definition}\label{def4.12}
The operator function $\ml$ introduced in Theorem \ref{th4.11} will be called
an $m$-function corresponding to the collection $\cP\in\TR$ or, equivalently,
to the boundary value problem \eqref{4.16}-\eqref{4.20}.

The $m$-function $\ml$ will be called canonical if $\cP\in \TRZ$ or,
equivalently, if it corresponds to the canonical boundary problem \eqref{4.16},
\eqref{4.21}, \eqref{4.22}.
\end{definition}
\begin{remark}\label{rem4.13}
Let under the conditions of Theorem \ref{th4.11} $W'$ and $\wt W'$ be different
isomorphisms \eqref{4.42} (with the same first column), let
$\Om_{\tau,W'}(\cd)$ and $\Om_{\tau,\wt W'}(\cd)$ be the corresponding
functions \eqref{4.48} and let $m_\cP(\l)$ and $\wt m_\cP(\l)$ be upper left
entries in the representations \eqref{4.50}. One can easily verify that $\wt
m_\cP(\l)=m_\cP(\l)+ C, \;\; C=C^*$, which implies that the $m$-function $\ml$
is defined by a collection $\cP\in\TR$ up to the selfadjoint constant.
\end{remark}
For a given operator pair \eqref{4.1} introduce the operator $N'\in [H^n\oplus
H^n, \hat \cK]$ and the subspaces $\t$ and $\t^\perp$ in $\HH$ by
\begin {equation}\label{4.58}
N'=(-N_0\;\;\; N_1):\HH\to\hat\cK, \qquad \t^\perp=\Ker N', \quad
\t=(\HH)\ominus \t^\perp.
\end{equation}
Clearly, the operator $N_0':=N'\up\t$ isomorphically maps $\t$ onto $\hat\cK$
and the operator
\begin {equation}\label{4.58a}
\hat N:=(N_0')^{-1}, \quad \hat N\in [\hat\cK,\HH])
\end{equation}
 is the right inverse for $N'$, that is
$N'\hat N=I_{\hat\cK}$. Assume also that
\begin {equation}\label{4.59}
\hat N=(\hat N_0 \;\;\;\hat N_1)^\top:\hat\cK\to \HH
\end{equation}
is the block matrix representation of the operator $\hat N$.
\begin{proposition}\label{pr4.14}
Let the assumptions (a) of Theorem \ref{th4.11} be satisfied and let $\a
(\l)(\in [\cK_-,\hat\cK] ), \;\l\in\bC_-$ be a linear fractional transformation
of the Weyl function $M_-(\l)$ given by
\begin {equation}\label{4.60}
\a(\l)=(\hat N_1^*P_{H^n}-\hat N_0^*P_{H^n}M_-(\l))(\wt D_1(\l)-\wt
D_0(\l)M_-(\l))^{-1}, \quad\l\in\bC_-.
\end{equation}
Then: 1) the $m$-function $\ml$ is a uniformly strict Nevanlinna function
satisfying the relations
\begin{gather}
m_\cP(\mu)-m_\cP^*(\l)=(\mu-\ov\l)\smallint_0^b v_\cP^*(t,\l)v_\cP(t,\mu)\,
dt\,- \a(\ov\l)\bigl (D_{1}(\ov\l)D_{01}^*(\ov\mu)-\label{4.61}\\
-D_{01}(\ov\l)D_1^*(\ov\mu) +i D_{02}(\ov\l)D_{02}^*(\ov\mu)\bigr
)\a^*(\ov\mu),
\quad\; \mu,\l\in \bC_+\nonumber\\
(Im\,\l)^{-1}\cd Im\,(m_\cP(\l))\geq\smallint_0^b v_\cP^*(t,\l)v_\cP(t,\l)\,dt,
\quad \l\in\bC_+.\label{4.62}
\end{gather}
Here $D_{01}(\cd)$ and $D_{02}(\cd)$ are taken from \eqref{2.8b} and the
integral converges strongly, that is
\begin {equation*}
\smallint_0^b v_\cP^*(t,\l)v_\cP(t,\mu)\, dt=s-\lim_{\eta\uparrow b}
\smallint_0^\eta  v_\cP^*(t,\l)v_\cP(t,\mu)\, dt.
\end{equation*}
For the canonical $m$-function $\ml$  the identity \eqref{4.61} takes the form
\begin {equation}\label{4.63}
m_\cP(\mu)-m_\cP^*(\l)=(\mu-\ov\l)\smallint_0^b v_\cP^*(t,\l)v_\cP(t,\mu)\, dt,
\quad \mu, \l\in \CR
\end{equation}
and the inequality \eqref{4.62} turns into the equality.

 2) The characteristic matrix $\Om_\tau (\cd )$ admits the representation
\begin {equation}\label{4.64}
\Om_\tau (\l)= \begin{pmatrix}\Om_0(\l) & \Om_1^* \cr \Om_1 & \Om_2
\end{pmatrix}:\t\oplus\t^\perp\to \t\oplus\t^\perp, \quad \l\in \CR,
\end{equation}
where $\Om_2=\Om_2^*\in [\t^\perp]$ and $\Om_0(\cd):\CR\to [\t]$ is a uniformly
strict Nevanlinna function associated with $\ml$ by
\begin {equation}\label{4.65}
\Om_0(\l)=N_0'^*\, m_\cP(\l)N_0'+C, \quad C=C^*\in [\t].
\end{equation}
Moreover the following equality holds
\begin {equation}\label{4.66a}
m_\cP(\l)=\hat N^* \Om_\tau(\l) \hat N +\hat C, \qquad \hat C=\hat C^*\in
[\hat\cK], \quad \l\in\CR.
\end{equation}
\end{proposition}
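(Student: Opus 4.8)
The proof rests on the factorization of $\Om_\tau(\cd)$ supplied by Theorem \ref{th4.11}. The plan is first to convert the block representation \eqref{4.50} into an explicit formula for $\Om_\tau(\l)$ itself, and then to read off both assertions from that single formula. Inverting \eqref{4.48} gives $\Om_\tau(\l)=W'\Om_{\tau,W'}(\l)(W')^*$; substituting \eqref{4.50} and writing the isomorphism \eqref{4.42} as a block row $W'=(N'^*\;\;T'^*)$, whose first block-column $N'^*=(-N_0^*\;\;N_1^*)^\top$ is exactly the first column of $W'$ (so that $(W')^*$ has first block-row $N'=(-N_0\;\;N_1)$ of \eqref{4.58} and second block-row $T'=(-T_0\;\;T_1)$), a direct expansion yields the key identity
\begin{equation*}
\Om_\tau(\l)=N'^*m_\cP(\l)N'+B,\qquad B=-\tfrac12\bigl(T'^*\cJ_2 N'+N'^*\cJ_2^*T'\bigr)=B^*,
\end{equation*}
in which $B$ is a constant selfadjoint operator. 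Everything below is extracted from this one relation.

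For statement 2) I would use that $N'^*$ maps $\hat\cK$ isomorphically onto $\t$, while $N'$ annihilates $\t^\perp=\Ker N'$; hence the $\l$-dependent summand $N'^*m_\cP(\l)N'$ is supported in the $\t\times\t$ corner only. Decomposing $B$ into blocks along $\t\oplus\t^\perp$ then produces \eqref{4.64} immediately, with $\Om_0(\l)=N_0'^*m_\cP(\l)N_0'+C$ (so $C=B_{11}=C^*$, which is \eqref{4.65}) and with $\Om_1=B_{21}$, $\Om_2=B_{22}=\Om_2^*$ constant. For \eqref{4.66a} I would sandwich the key identity between $\hat N^*$ and $\hat N$: since $N'\hat N=I_{\hat\cK}$ and hence $\hat N^*N'^*=I_{\hat\cK}$, the $\l$-dependent term collapses to $m_\cP(\l)$, and the remaining constant $\hat C:=-\hat N^*B\hat N=\hat C^*$ gives the formula.

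For statement 1) the Nevanlinna identity \eqref{4.61} follows by combining \eqref{4.66a} with the identity \eqref{3.36} of Proposition \ref{pr3.6}, since $m_\cP(\mu)-m_\cP^*(\l)=\hat N^*(\Om_\tau(\mu)-\Om_\tau^*(\l))\hat N$ and the $\hat C$'s cancel. To identify the two resulting terms I would prove two auxiliary equalities by direct computation from the $N$-triangular shape $\hat C_j=(N_j\;\;0)^\top$ (already used in the proof of Theorem \ref{th4.9}) together with \eqref{4.29}, \eqref{3.18a}: first, $\g_\tau(\mu)\hat N$ is exactly the multiplication operator $\hat h\mapsto v_\cP(\cd,\mu)\hat h$, because $(-\hat C_0\;:\;\hat C_1)\hat N=(N'\hat N,0)=(I,0)$ on $\hat\cK$, whence $\hat N^*\g_\tau^*(\l)\g_\tau(\mu)\hat N=\int_0^b v_\cP^*(t,\l)v_\cP(t,\mu)\,dt$; second, $\hat N^*\wt\a(\l)=\a(\l)$ directly from \eqref{3.35}, \eqref{4.59}, \eqref{4.60}. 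Substituting these into \eqref{3.36} yields \eqref{4.61} verbatim.

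The remaining analytic claims then follow from \eqref{4.61}. Selfadjointness $m_\cP^*(\l)=m_\cP(\ov\l)$ is immediate from \eqref{4.66a} and $\Om_\tau(\l)=\Om_\tau^*(\ov\l)$. Putting $\mu=\l\in\bC_+$ and setting $S(\l):=2\,Im(D_1(\ov\l)D_{01}^*(\ov\l))+D_{02}(\ov\l)D_{02}^*(\ov\l)$, the middle term of \eqref{4.61} equals $i\,\a(\ov\l)S(\l)\a^*(\ov\l)$; since $S(\l)\le0$ by \eqref{2.10}, dividing by $2i$ gives $(Im\,\l)^{-1}Im\,m_\cP(\l)\ge\int_0^b v_\cP^*(t,\l)v_\cP(t,\l)\,dt\ge0$, which is both the Nevanlinna inequality and \eqref{4.62}. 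Uniform strictness follows because $v_\cP(\cd,\l)=\g_+(\l)(C_0(\l)-C_1(\l)M_+(\l))^{-1}\up\hat\cK$ is bounded below (a restriction of the isomorphism $(C_0(\l)-C_1(\l)M_+(\l))^{-1}$ composed with the isomorphism $\g_+(\l)$, cf. Proposition \ref{pr4.8}), so $\int_0^b v_\cP^*v_\cP\,dt\ge c\,I$ with $c>0$ and hence $0\in\rho(Im\,m_\cP(\l))$; finally, for $\cP\in\TRZ$ the relations \eqref{2.12a} force $S(\l)\equiv0$, so the middle term vanishes, turning \eqref{4.61} into \eqref{4.63} and \eqref{4.62} into an equality. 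I expect the main difficulty to be organizational rather than conceptual: keeping the three decompositions $\hat\cK\oplus\hat\cK^\perp$, $\t\oplus\t^\perp$ and $\HH$ consistent through the factorization and confirming that $\g_\tau(\mu)\hat N$ collapses to $v_\cP(\cd,\mu)$; the only genuinely analytic point is the bounded-below estimate securing uniform strictness.
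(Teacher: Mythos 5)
Your proposal follows essentially the same route as the paper's own proof: the key identity $\Om_\tau(\l)=N'^*\,m_\cP(\l)N'+\wt C$ (with $\wt C=\wt C^*$ constant) obtained by expanding \eqref{4.48} via \eqref{4.50}, the sandwich with $\hat N^*$ and $\hat N$ giving \eqref{4.66a}, the substitution into the identity \eqref{3.36} together with the identifications $\g_\tau(\l)\hat N=\g_+(\l)(C_0(\l)-C_1(\l)M_+(\l))^{-1}\up\hat\cK$ (multiplication by $v_\cP(\cd,\l)$) and $\hat N^*\wt\a(\l)=\a(\l)$, and the block form $N'=(N_0'\;\;0)$ on $\t\oplus\t^\perp$ for statement 2). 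The only step you pass over that the paper treats explicitly is the identification of the adjoint $\g_c^*(\l)$ with the strongly convergent improper integral $f\mapsto\smallint_0^b v_\cP^*(t,\l)f(t)\,dt$, for which the paper invokes Lemma 4.1, 3) of \cite{Mog09.2}.
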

\begin{proof}
 Let $W'$ be an isomorphism \eqref{4.42} and let $\Om_{\tau,W'}(\l)$ be the
operator function \eqref{4.48}. Then $\Om_\tau(\l)=W'\Om_{\tau,W'}(\l)W'^*$ and
the immediate calculation with taking \eqref{4.50} into account shows that
\begin {equation}\label{4.66}
\Om_\tau(\l)=N'^*\, m_\cP(\l)N'+\wt C
\end{equation}
with some $\wt C=\wt C^*\in [\HH]$. Multiplying the equality \eqref{4.66} by
$\hat N^*$ from the left and by $\hat N$ from the right one obtains
\eqref{4.66a}. Therefore $\ml$ is a Nevanlinna function.

In view of \eqref{4.66a} and \eqref{3.36}
\begin{gather}
m_\cP(\mu)-m_\cP^*(\l)=(\mu-\ov\l)\g_c^*(\l)\g_c(\mu)- \a(\ov\l)\bigl (D_{1}
(\ov\l)D_{01}^*(\ov\mu)-\label{4.67}\\
-D_{01}(\ov\l)D_1^*(\ov\mu) +i D_{02}(\ov\l)D_{02}^*(\ov\mu)\bigr
)\a^*(\ov\mu), \quad\; \mu,\l\in \bC_+,\nonumber
\end{gather}
where
\begin {equation}\label{4.68}
\g_c(\l)=\g_\tau(\l) \hat N, \quad \l\in\bC_+; \qquad \a(\l)=\hat N^*\wt
\a(\l), \quad \l\in \bC_-
\end{equation}
and $\g_\tau(\cd)$ and $\wt\a (\cd)$ are the operator functions \eqref{3.34}
and \eqref{3.35} respectively. Moreover by  \eqref{4.67} and the inequality in
\eqref{2.10} one has
\begin {equation}\label{4.68.1}
(Im\,\l)^{-1}\cd Im\,(m_\cP(\l))\geq \g_c^*(\l)\g_c(\l), \quad \l\in\bC_+,
\end{equation}
It follows from \eqref{4.2} and  \eqref{4.3} that for all $\l\in\bC_+$ the
operator $(-\hat C_0(\l): \hat C_1(\l))$ in \eqref{3.34} coincides with $N'$.
This and the equality $N'\hat N=I_{\hat\cK}$ imply that
\begin {equation}\label{4.68a}
\g_c(\l)=\g_+(\l)(C_0(\l)-C_1(\l)M_+(\l))^{-1}\up\hat\cK, \quad \l\in\bC_+.
\end{equation}
and, consequently, $0\in\rho (\g_c^*(\l)\g_c(\l))$. Therefore by \eqref{4.68.1}
the Nevanlinna function $\ml$ is uniformly strict. Moreover, in view of
\eqref{4.68a} and \eqref{4.29} one has $(\g_c(\l)\hat h)(t)=v_\cP(t,\l)\hat h
\;( \hat h\in\hat\cK)$. Applying now Lemma 4.1, 3) from \cite{Mog09.2} to
$v_\cP(t,\l)$ we arrive at the equality
\begin {equation*}
\g_c^*(\l)f=\smallint_0^b v_\cP^*(t,\l)f(t)\, dt:=\lim_{\eta\uparrow b}
\smallint_0^\eta  v_\cP^*(t,\l)f(t)\, dt, \quad f=f(t)\in\gH,
\end{equation*}
which implies that
\begin {equation}\label{4.70}
\g_c^*(\l)\g_c(\mu)=\smallint_0^b v_\cP^*(t,\l)v_\cP(t,\mu)\,
dt:=s-\lim_{\eta\uparrow b} \smallint_0^\eta  v_\cP^*(t,\l)v_\cP(t,\mu)\, dt.
\end{equation}

Next, in view of \eqref{3.35} and \eqref{4.59} the second equality in
\eqref{4.68} can be written as
\begin {gather*}
\a(\l)=(\hat N_0^* :\hat N_1^*)\begin{pmatrix} -P_{H^n}M_-(\l)\cr P_{H^n}
\end{pmatrix} (\wt D_1(\l)-\wt D_0(\l)M_-(\l))^{-1}=\\
(\hat N_1^*P_{H^n}-\hat N_0^*P_{H^n}M_-(\l))(\wt D_1(\l)-\wt
D_0(\l)M_-(\l))^{-1}.
\end{gather*}
Therefore the operator function $\a(\l)$ defined by \eqref{4.68} can be
represented in the form \eqref{4.60}. Combining this assertion with
\eqref{4.67}, \eqref{4.68.1} and \eqref{4.70} we obtain the identity
\eqref{4.61} and the inequality \eqref{4.62}. Moreover \eqref{4.61} and the
equality \eqref{2.12a} yield \eqref{4.63}.

Finally, the equality \eqref{4.64}  is immediate from \eqref{4.66} and the
block representation $ N'=(N_0'\;\;0):\t\oplus\t^\perp\to\hat\cK. $
\end{proof}
\begin{corollary}\label{cor4.14a}
Let the assumptions  (a) of Theorem \ref{th4.11} be satisfied. Then the
following statements are equivalent:

(i) the characteristic matrix $\Om_\tau(\cd)$ is a uniformly strict Nevanlinna
function;

(ii) the operator $N=(N_0\;\;N_1)$  \eqref{4.1}  isomorphically maps $\HH$ onto
$\hat\cK$.

If in addition $\dim H<\infty$, then the  statement (i) is equivalent to the
following one:

(iii) the operator $L_0$ has maximal  deficiency indices
$n_+(L_0)=n_-(L_0)=2n\dim H$, $\cH_0'=\cH_1'=:\cH'$ (i.e., $\Pi=\{H^n\oplus
\cH', \G_0,\G_1 \}$ is a decomposing boundary triplet for $L$), $\dim
\cH'=n\dim H$ and the collection $\cP$ can be represented as the  holomorphic
Nevanlinna pair (c.f. Remark \ref{rem2.3}, 2)) $C(\l)=(C_0(\l)\;\; C_1(\l)),
\;\;\l\in\CR$,
\begin {equation}\label{4.71}
C_0(\l)=(N_0\;\;C_0'(\l)):H^n\oplus \cH'\to\hat\cK, \quad
C_1(\l)=(N_1\;\;C_1'(\l)):H^n\oplus \cH'\to\hat\cK,
\end{equation}
where $\dim\hat\cK=2n\dim H$ and the operator $N=(N_0\;\,N_1):\HH\to\hat\cK $
is an isomorphism.
\end{corollary}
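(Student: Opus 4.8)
The plan is to extract both equivalences directly from the two representations of the characteristic matrix obtained in Proposition \ref{pr4.14}, from the uniform strictness of the $m$-function $\ml$ proved there, and from a dimension count based on \eqref{4.10}. For the equivalence (i) $\Llr$ (ii) I would start from \eqref{4.66}, that is $\Om_\tau(\l)=(N')^*m_\cP(\l)N'+\wt C$ with a constant selfadjoint $\wt C$ and $N'=(-N_0\;\;N_1):\HH\to\hat\cK$. Since $\wt C=\wt C^*$ does not depend on $\l$, taking imaginary parts gives $\text{Im}\,\Om_\tau(\l)=(N')^*\,(\text{Im}\,m_\cP(\l))\,N'$ for all $\l\in\CR$. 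By Proposition \ref{pr4.14} the function $\ml$ is uniformly strict, so $\text{Im}\,m_\cP(\l)$ is boundedly invertible, and for $\l\in\bC_+$ even uniformly positive, $\text{Im}\,m_\cP(\l)\geq\e I$ with some $\e>0$. Hence the nonnegative operator $(N')^*\,(\text{Im}\,m_\cP(\l))\,N'$ is boundedly invertible precisely when $N'$ is bounded below; as $N'$ is already surjective onto $\hat\cK$ (one checks $\cR(N')=\cR(N)=\hat\cK$ via $\{h_0,h_1\}\mapsto\{-h_0,h_1\}$), this happens exactly when $N'$ is an isomorphism. The same substitution identifies $\Ker N'$ with $\Ker N$, so $N'$ is an isomorphism iff $N$ is, which yields (i) $\Llr$ (ii).

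Assume now $\dim H<\infty$ and that (ii) holds, so $\dim\hat\cK=\dim\HH=2n\dim H$. Then \eqref{4.10} gives $2n\dim H=\dim\hat\cK\leq n_-(L_0)$, while the known bound on the singular-endpoint defect, $n_{b+}\leq n\dim H$ (equivalently $n_\pm(L_0)\leq 2n\dim H$), forces $n_-(L_0)=n_+(L_0)=2n\dim H$, i.e.\ $n_{b-}=n_{b+}=n\dim H$. By \eqref{3.13} this means $\dim\cH_1'=\dim\cH_0'=n\dim H$, and since $\cH_1'\subset\cH_0'$ are finite dimensional of equal dimension, $\cH_1'=\cH_0'=:\cH'$; thus $\Pi$ is a decomposing boundary triplet. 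Moreover $\dim\cK_0=\dim\cH_0=2n\dim H=\dim\hat\cK$ by \eqref{2.14b}, and from $\cK_0=\hat\cK\oplus\cK_0'$ we get $\cK_0'=\{0\}$, and likewise $\cK_1'=\{0\}$; together with $\cH_0'=\cH_1'$ this collapses the collection $\cP$ to a single holomorphic Nevanlinna pair into $\hat\cK$ as in Remark \ref{rem2.3}, 2), with first block column $N$. This is exactly (iii).

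The reverse implication (iii) $\Rightarrow$ (ii) is immediate, since (iii) already asserts that $N:\HH\to\hat\cK$ is an isomorphism; combining it with (i) $\Llr$ (ii) closes the chain of equivalences. I expect the only delicate point to be the imaginary-part step of (i) $\Llr$ (ii): in the general, possibly infinite-dimensional, setting one must argue carefully that surjectivity of $N'$ together with bounded invertibility of $\text{Im}\,m_\cP(\l)$ reduces uniform strictness of $\Om_\tau(\cd)$ to $N'$ being bounded below, which is where the sandwich structure $(N')^*(\cdot)N'$ and the sign-definiteness of $\text{Im}\,m_\cP(\l)$ are essential. By contrast, the passage to (iii) is a routine dimension count once the endpoint estimate $n_{b+}\leq n\dim H$ is invoked.
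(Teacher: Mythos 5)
Your proof is correct and takes essentially the same route as the paper: the paper obtains (i)$\,\Llr\,$(ii) from the block representation \eqref{4.64} over $\t\oplus\t^\perp$ (which is just \eqref{4.66} restricted along $N'=(N_0'\;\;0)$), concluding that $\Om_\tau(\cd)$ is uniformly strict iff $\t^\perp=\Ker N'=\{0\}$, and then proves (ii)$\,\Llr\,$(iii) by exactly your dimension count via \eqref{4.10}, \eqref{3.13} and \eqref{2.14b}. Your imaginary-part/bounded-below argument is a slightly more hands-on rendering of the same fact, and is equivalent to the paper's injectivity criterion because the admissible pair $N$ is surjective, so its range is closed.
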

\begin{proof}
It follows from \eqref{4.64} that the Nevanlinna function $\Om_\tau(\cd)$ is
uniformly strict if and only if $\t^\perp=\{0\}$. Moreover by \eqref{4.58} one
has $\t^\perp=\{0\}\Leftrightarrow \Ker N(=\Ker N')=\{0\}$. This yields the
equivalence $(i)\Leftrightarrow (ii)$.

Next assume that $\dim H<\infty$ and prove the equivalence $(ii)\Leftrightarrow
(iii)$. If $0\in\rho (N)$, then $\dim\hat\cK=\dim (\HH)=2n\,\dim H$ and by
\eqref{4.10} $n_-(L_0)=n_+(L_0)=2n\,\dim H$. This and the second relation in
\eqref{3.13} imply that $\dim \cH_1=\dim\cH_0=2n\,\dim H$  and hence
$\cH_0=\cH_1=:\cH$. Therefore $\cH_0'=\cH_1'$ and $\Pi=\{H^n\oplus \cH',
\G_0,\G_1 \}$ is a decomposing boundary triplet for $L$. Moreover by
\eqref{2.14b} the Hilbert spaces $\hat\cK\oplus \cK_j'$ in
\eqref{4.2}-\eqref{4.5} satisfy the equalities $\dim (\hat\cK\oplus
\cK_j')=\dim \cH=2n\, \dim H=\dim \hat\cK$. Hence $\cK_j'=\{0\},\;j\in \{0,1\}$
and the equalities \eqref{4.2}-\eqref{4.5} take the form \eqref{4.71}, which
yields the implication $(ii)\Rightarrow (iii)$. The inverse implication
$(iii)\Rightarrow (ii)$ is obvious. Thus in the case $\dim H<\infty$  the
equivalences $(i)\Leftrightarrow (ii)\Leftrightarrow (iii)$ hold.
\end{proof}
\begin{remark}\label{rem4.14b}
1) It follows from Corollary \ref{cor4.14a} that in the case $\dim H<\infty$
and $n_-(L_0)< 2n\,\dim H$ the characteristic matrix $\Om_\tau (\cd)$
corresponding to the boundary operators \eqref{4.2}-\eqref{4.5} is not
uniformly strict Nevanlinna function. In particular, by Proposition
\ref{pr4.4a} this statement holds for each canonical characteristic matrix
corresponding to the constant Nevanlinna collection \eqref{2.12}.

2) Let $\cP_0\in\TR$ be the collection \eqref{4.2}-\eqref{4.5} with
$\hat\cK=H^n,\; \cK_j'=\cH_j',\; j\in \{0,1\}$ and $C_0(\l) =I_{\cH_0},\;
C_1(\l)=0_{\cH_1,\cH_0},\;D_0=P_1$ and $D_1=0_{\cH_1}$. Then the corresponding
$m$-function $m_{\cP_0}(\cd)$ coincides with the operator function $m(\cd)$
defined by \eqref{3.21} and \eqref{3.22}. Note in this connection that the
statements of Theorem \ref{th4.11} and Proposition \ref{pr4.14} for
$m(\l)(=m_{\cP_0}(\l))$ were obtained in our paper \cite{Mog09.1}.
\end{remark}
\subsection{m-function and a characteristic matrix as the Weyl functions}
In this subsection we show that a canonical $m$-function $\ml$ is the Weyl
function  of some symmetric extension $\hat A\in \exl$, while a canonical
characteristic matrix $\Om_\tau (\cd)$ is the Weyl function of the minimal
operator $L_0$ (the last statement holds under some additional assumptions).
\begin{proposition}\label{pr4.15}
Assume that $\Pi=\bt$ is a decomposing  boundary triplet \eqref{3.10} for $L$
(with $\cH=H^n\oplus\cH'$) and $\cP=\{C_0,C_1\}\in TR^0 (\cH)$ is an operator
pair defined by \eqref{4.7} with $\cH_0'=\cH_1'=:\cH'$. Moreover let $X_j\in
[\cH,\cK], \; j\in\{0,1\}$ be operators such that the operator
\begin {equation}\label{4.73}
X=\begin{pmatrix} C_0 & -C_1 \cr X_0 & -X_1
\end{pmatrix}:\cH\oplus\cH\to\cK\oplus\cK
\end{equation}
satisfies the relations $X^* J_\cK X=J_\cH$ and $0\in\rho (X)$ (such operators
$X_0$ and $X_1$ exist, because $\{(C_0, -C_1);\cK\}$ is a selfadjoint operator
pair). Suppose also that
\begin {equation}\label{4.74}
X_0=\begin{pmatrix} X_{01} & X_{02} \cr X_{03} & X_{04} \end{pmatrix}: H^n
\oplus\cH'\to \hat\cK\oplus\cK', \quad X_1= \begin{pmatrix}
 X_{11} & X_{12} \cr X_{13} & X_{14} \end{pmatrix}: H^n
\oplus\cH'\to \hat\cK\oplus\cK'
\end{equation}
are the block matrix representations of $X_j, \;j\in\{0,1\}$ and
\begin {equation}\label{4.75}
\wt C_0':=\begin{pmatrix} C_{01}' \cr C_{02}' \cr X_{02}\end{pmatrix}:\cH'\to
\hat\cK \oplus \cK'\oplus \hat\cK, \qquad \wt C_1':=\begin{pmatrix} C_{11}' \cr
C_{12}' \cr X_{12}\end{pmatrix}:\cH'\to \hat\cK \oplus \cK'\oplus \hat\cK.
\end{equation}

Then: 1) the operator $\hat A:=L\up \cD (\hat A)$ defined by the decomposing
boundary conditions
\begin {equation}\label{4.76}
\cD (\hat A)=\{y\in\cD: \;y^{(1)}(0)=y^{(2)}(0)=0, \; \wt C_0'\G_0'y- \wt
C_1'\G_1'y=0\}
\end{equation}
is a closed symmetric extension of $L_0$;

2) the adjoint $\hat A^*$ of $\hat A$ is defined by the boundary condition
\begin {equation}\label{4.77}
\cD (\hat A^*)=\{y\in\cD :\, C_{02}'\G_0'y-C_{12}'\G_1'y=0\};
\end{equation}

3) the maps $\hat\G_j:\cD (\hat A^*)\to \hat \cK, \; j\in\{0,1\}$ given by
\begin {gather}
\hat \G_0y= N_0 y^{(2)}(0)+N_1 y^{(1)}(0)+C_{01}'\G_0'y-C_{11}'
\G_1'y,\qquad\label{4.78}\\
\qquad\hat \G_1y= X_{01} y^{(2)}(0)+X_{11} y^{(1)}(0)+X_{02}\G_0'y-X_{12}
\G_1'y, \quad  y\in \cD (\hat A^*)\label{4.79}
\end{gather}
form a boundary triplet $\hat\Pi=\{\hat \cK, \hat \G_0,\hat\G_1\}$ for $\hat
A^*$;

4) the corresponding $\g$-field and  Weyl function \eqref{2.28} for $\hat\Pi$
are
\begin {gather}
(\hat \g(\l)h)(t)=v_\cP (t,\l)h, \quad h\in \hat\cK, \quad \l\in\CR
\label{4.80}\\
\hat M(\l)=m_\cP(\l)+\hat D, \quad \hat D=\hat D^*\in [\hat\cK], \quad
\l\in\CR.\label{4.81}
\end{gather}
\end{proposition}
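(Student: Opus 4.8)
The plan is to realize $\hat\Pi$ as the reduction of a transformed copy of the given triplet $\Pi=\bt$, and to read parts 1)--3) off the abstract calculus of boundary triplets. First I would set $\Phi_0:=C_0\G_0-C_1\G_1$ and $\Phi_1:=X_0\G_0-X_1\G_1$ as maps $\cD\to\cK$, so that $(\Phi_0\;\;\Phi_1)^\top=X(\G_0\;\;\G_1)^\top$. Since $\cH_0=\cH_1=\cH$, $\Pi$ is a boundary triplet and Green's identity $(Lf,g)-(f,Lg)=(\G_1f,\G_0g)-(\G_0f,\G_1g)$ holds; the hypothesis $X^*J_\cK X=J_\cH$ transfers it verbatim to $\Phi_0,\Phi_1$, surjectivity of $(\Phi_0\;\;\Phi_1)^\top$ follows from surjectivity of $\G$ together with $0\in\rho(X)$, and $\Ker\Phi_0\cap\Ker\Phi_1=\Ker\G=\cD(L_0)$. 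Hence $\{\cK,\Phi_0,\Phi_1\}$ is a boundary triplet for $L=L_0^*$.

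Next, using the decomposition $\cK=\hat\cK\oplus\cK'$, I would pass to the reduced triplet. A direct computation with the block forms \eqref{4.7}, \eqref{4.74} gives $P_{\hat\cK}\Phi_0=\hat\G_0$ and $P_{\hat\cK}\Phi_1=\hat\G_1$ (formulas \eqref{4.78}, \eqref{4.79}), while $P_{\cK'}\Phi_0y=C_{02}'\G_0'y-C_{12}'\G_1'y$. Thus $\cD(\hat A^*)$ in \eqref{4.77} is exactly $\Ker(P_{\cK'}\Phi_0)$, which corresponds to the relation $\Theta=\hat\cK\oplus\cK$ in the parametrization of $\{\cK,\Phi_0,\Phi_1\}$; its adjoint $\Theta^*=\{0\}\oplus\cK'$ is symmetric and closed, so $\hat A:=L\up\{y:\Phi_0y=0,\ P_{\hat\cK}\Phi_1y=0\}$ is closed symmetric with $(\hat A)^*=\hat A^*$. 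For $f,g\in\cD(\hat A^*)$ the $\cK'$-components of $\Phi_0$ drop out and Green's identity collapses to $(\hat\G_1f,\hat\G_0g)-(\hat\G_0f,\hat\G_1g)$, while surjectivity of $(\hat\G_0\;\;\hat\G_1)^\top$ onto $\hat\cK\oplus\hat\cK$ is inherited from that of $(\Phi_0\;\;\Phi_1)^\top$. This gives assertions 2), 3) and the symmetry of $\hat A$.

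The one genuinely non-routine point --- the step I expect to be the main obstacle --- is to check that this abstract $\hat A$ coincides with the operator \eqref{4.76}. The inclusion $\cD(\hat A)\subseteq\{y:\Phi_0y=0,\ \hat\G_1y=0\}$ is immediate. For the converse I would invert $X$ explicitly: from $X^*J_\cK X=J_\cH$ one obtains $X^{-1}=-J_\cH X^*J_\cK$, i.e. $\G_0=-X_1^*\Phi_0+C_1^*\Phi_1$ and $\G_1=-X_0^*\Phi_0+C_0^*\Phi_1$. If $\Phi_0y=0$ and $\hat\G_1y=0$, then $\Phi_1y=:\phi'\in\cK'$ and $\G_0y=C_1^*\phi'$, $\G_1y=C_0^*\phi'$; because $C_0,C_1$ are block lower triangular, the $(1,2)$-entries of $C_0^*,C_1^*$ vanish, so the $H^n$-components of $C_0^*\phi'$ and $C_1^*\phi'$ are zero, forcing $y^{(1)}(0)=y^{(2)}(0)=0$. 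The three remaining relations of \eqref{4.76} then follow at once from $\Phi_0y=0$ and $\hat\G_1y=0$, yielding $\cD(\hat A)=$ \eqref{4.76} and completing 1).

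Finally, for 4) I would identify the $\g$-field and Weyl function of $\hat\Pi$. By Proposition \ref{pr4.8} the function $v_\cP(\cd,\l)\hat h$ solves $l[y]-\l y=0$, lies in $\cD(\hat A^*)$ (condition \eqref{4.26}) and satisfies $\hat\G_0(v_\cP(\cd,\l)\hat h)=\hat h$ (condition \eqref{4.25}); hence $v_\cP(\cd,\l)\hat h\in\gN_\l(\hat A)$ and $\hat\g(\l)=v_\cP(\cd,\l)$, which is \eqref{4.80}. For \eqref{4.81} I would compare Nevanlinna kernels: the Weyl function of $\hat\Pi$ obeys $\hat M(\mu)-\hat M^*(\l)=(\mu-\ov\l)\hat\g^*(\l)\hat\g(\mu)$, whereas \eqref{4.63} (the canonical case $\cP\in\TRZ$) together with \eqref{4.70} gives $m_\cP(\mu)-m_\cP^*(\l)=(\mu-\ov\l)\smallint_0^b v_\cP^*(t,\l)v_\cP(t,\mu)\,dt=(\mu-\ov\l)\hat\g^*(\l)\hat\g(\mu)$. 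Thus $\hat M(\cd)-m_\cP(\cd)$ has vanishing kernel, so it is a constant operator $\hat D$, and evaluation on the diagonal $\mu=\l$ forces $\hat D=\hat D^*$, which is \eqref{4.81}.
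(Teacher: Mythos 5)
Your proposal is correct, and its overall skeleton --- transform $\Pi$ by $X$ into a triplet $\{\cK,\Phi_0,\Phi_1\}$, reduce to the $\hat\cK$-block to obtain $\hat\Pi$, and prove \eqref{4.81} by comparing the Nevanlinna kernel identity for $\hat M(\cd)$ with \eqref{4.63} and \eqref{4.70} --- is the same as the paper's; in particular your part 4) coincides with the paper's argument almost verbatim. The differences are these. First, where the paper simply cites \cite{DM00} (both for the fact that $\wt\G_0=C_0\G_0-C_1\G_1$, $\wt\G_1=X_0\G_0-X_1\G_1$ form a boundary triplet for $L$ with the transformed $\g$-field \eqref{4.83}, and for Proposition 4.1 there, which produces $\hat A$, $\hat A^*$ and the reduced triplet $\hat\Pi$), you verify these facts from scratch: Green's identity transfers via $X^*J_\cK X=J_\cH$, surjectivity via $0\in\rho(X)$, and the extension/relation correspondence with $\Theta=\hat\cK\oplus\cK$, $\Theta^*=\{0\}\oplus\cK'$ gives statements 1)--3) in abstract form. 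Second, and more substantially, the crux you flagged --- showing $y^{(1)}(0)=y^{(2)}(0)=0$ on the abstract domain of $\hat A$, which is what turns the abstract description into \eqref{4.76} --- is handled by a genuinely different argument: the paper argues analytically, taking the set $\cD_2'$ of functions finite at $b$, using $\G_0'\up\cD_2'=\G_1'\up\cD_2'=0$ (from \cite{Mog09.1}) and \eqref{4.77} to get $\cD_2'\subset\cD(\hat A^*)$, and then invoking the Lagrange identity \eqref{3.5} to force $[y,z](0)=0$ for all $z\in\cD_2'$, hence $\wt y(0)=0$; you instead compute the symplectic inverse $X^{-1}=-J_\cH X^*J_\cK$, so that $\G_0y=C_1^*\phi'$, $\G_1y=C_0^*\phi'$ with $\phi'=\Phi_1y\in\cK'$, and the vanishing $(2,1)$-blocks of $C_0,C_1$ in \eqref{4.7} (a small slip: these matrices are block \emph{upper}, not lower, triangular, but what you use --- vanishing of the $(1,2)$-blocks of $C_0^*,C_1^*$ --- is exactly right) kill the $H^n$-components. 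Your route is purely algebraic and self-contained, needing no facts about boundary values at the singular endpoint; the paper's route is shorter given its cited machinery and exploits the differential structure by testing against functions supported away from $b$. Likewise, your derivation of \eqref{4.80} directly from the boundary conditions \eqref{4.25}, \eqref{4.26} of Proposition \ref{pr4.8} (rather than from the transformed $\g$-field formula \eqref{4.87}, as in the paper) is a valid shortcut, since a solution of \eqref{3.4} lying in $\cD(\hat A^*)$ with $\hat\G_0$-image $\hat h$ is by definition $\hat\g(\l)\hat h$.
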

\begin{proof}
According to \cite{DM00}  the operators
\begin {equation}\label{4.82}
\wt\G_0=C_0\G_0-C_1\G_1, \qquad \wt \G_1=X_0\G_0-X_1\G_1
\end{equation}
form a boundary triplet $\wt\Pi=\{\hat\cK \oplus \cK', \wt\G_0, \wt\G_1\}$ for
$L$ and the $\g$-field for $\wt\Pi$ is
\begin {equation}\label{4.83}
\wt \g(\l)=\g(\l) (C_0-C_1 M(\l))^{-1}, \quad \l\in\CR,
\end{equation}
where $\g(\cd)$ and $M(\cd)$ are the $\g$-field and the Weyl function for $\Pi$
respectively. Moreover by \eqref{4.7}, \eqref{4.74} and \eqref{3.10} the
equalities \eqref{4.82} can be written as
\begin{gather}
\wt\G_0 y=\{N_0 y^{(2)}(0)+N_1y^{(1)}(0) +C_{01}'\G_0'y-C_{11}'\G_1'y,\;\;
C_{02}'\G_0'y- C_{12}'\G_1'y\}\qquad\label{4.84}\\
\wt\G_1 y=\{X_{01} y^{(2)}(0)+X_{11}y^{(1)}(0)
+X_{02}\G_0'y-X_{12}\G_1'y,\qquad\qquad\qquad\qquad\label{4.85}\\
\qquad\qquad\qquad\qquad\qquad\qquad X_{03} y^{(2)}(0)+X_{13}y^{(1)}(0)+
X_{04}\G_0'y- X_{14}\G_1'y\}, \quad y\in\cD\nonumber
\end{gather}
Applying Proposition 4.1 from \cite{DM00} to the triplet $\wt \Pi$ one obtains
the following assertions:

(i) the equality $\cD (\hat A)=\{y\in \cD: \wt\G_0y=P_{\hat\cK}\wt\G_1 y=0\}$
defines a symmetric extension $\hat A\in\exl;$

(ii) the adjoint $\hat A^*$ of $\hat A$ is given by $\cD (\hat A^*)=\{y\in\cD:
P_{\cK'}\wt\G_0 y=0 \}$;

(iii) the operators $\hat\G_j:\cD (\hat A^*)\to \hat \cK, \; j\in\{0,1\}$ given
by
\begin {equation}\label{4.86}
\hat\G_0y=\wt\G_0 y, \quad \hat\G_1 y=P_{\hat\cK}\wt\G_1 y, \quad y\in\cD (\hat
A^* )
\end{equation}
form a boundary triplet $\hat\Pi=\{\hat \cK, \hat \G_0,\hat\G_1\}$ for $\hat
A^*$ and the $\g$-field for $\hat\Pi$ is
\begin {equation}\label{4.87}
\hat \g(\l)=\wt\g (\l)\up\hat\cK=\g(\l)(C_0-C_1 M(\l))^{-1}\up\hat\cK.
\end{equation}
Next we show that the operator $\hat A$ and the triplet $\hat\Pi$ satisfy the
statement 1)-4) of the proposition.

First observe that   \eqref{4.77} is immediate from the assertion (ii) and the
equality \eqref{4.84}. Next, \eqref{4.84}, \eqref{4.85} and the assertion (i)
yield the equivalence
\begin {equation}\label{4.88}
y\in\cD (\hat A)\iff \begin{cases} N_0 y^{(2)}(0)+N_1y^{(1)}(0)+C_{01}'\G_0'y-
C_{11}'\G_1'y=0 \cr C_{02}'\G_0'y- C_{12}'\G_1'y=0 \cr X_{01}
y^{(2)}(0)+X_{11}y^{(1)}(0) +X_{02}\G_0'y-X_{12}\G_1'y=0
 \end{cases}.
\end{equation}
Let $\cD_2'$ be the set of all functions $y\in\cD$ finite at the point $b$.
According to \cite{Mog09.1} $\G_0'\up\cD_2'=\G_1'\up\cD_2'=0$ and in view of
\eqref{4.77} $\cD_2'\subset \cD (\hat A^*)$. This and the Lagrange's  identity
\eqref{3.5} imply that $y^{(1)}(0)=y^{(2)}(0)=0$ for all $y\in\cD (\hat A)$.
Combining these equalities with \eqref{4.88} and taking \eqref{4.75} into
account we arrive at \eqref{4.76}.

The statement 3) of the proposition follows from \eqref{4.86} and the
equalities  \eqref{4.84}, \eqref{4.85}. Moreover combining \eqref{4.87} with
\eqref{3.18a} and \eqref{4.29} one obtains the equality \eqref{4.80} for $\hat
\g (\l) $. Finally to prove \eqref{4.81} note that the Weyl function $\hat
M(\cd)$ for $\hat \Pi$ satisfies the identity \cite{DM91}
\begin {equation}\label{4.89}
\hat M(\mu)-\Hat M^*(\l)=(\mu-\ov\l)\hat\g^*(\l)\hat \g(\mu), \quad
\mu,\l\in\CR.
\end{equation}
Moreover in view of \eqref{4.87} $\hat \g(\l)$ coincides with the operator
function $\g_c(\l)$ defined by \eqref{4.68a}. Therefore the equality
\eqref{4.70} holds with $\hat\g(\l)$ in place of $\g_c(\l)$ and by \eqref{4.63}
\begin {equation}\label{4.89a}
 m_\cP(\mu)-m_\cP^*(\l)=(\mu-\ov\l)\hat\g^*(\l)\hat \g(\mu),\quad \mu,\l\in\CR.
\end{equation}
Now comparing \eqref{4.89} and \eqref{4.89a} we arrive at the relation
\eqref{4.81}.
\end{proof}
\begin{corollary}\label{cor4.17}
Assume that $\Pi=\{H^n\oplus \cH', \G_0,\G_1\}$ is a decomposing boundary
triplet for $L$, $N=(N_0\;\;N_1)$ is an operator pair \eqref{4.1} such that
$0\in\rho (N)$, $N'$ is the operator \eqref{4.58}, $\cP=\{C_0,C_1\}\in
TR^0(\cH)$ is the operator pair given by (c. f. \eqref{4.71})
\begin {equation}\label{4.90}
C_0=(N_0\;\; C_0'): H^n\oplus \cH'\to \hat\cK, \qquad C_1=(N_1\;\; C_1'):
H^n\oplus \cH'\to \hat\cK
\end{equation}
and
\begin {equation*}
X=\begin{pmatrix} (N_0 \;\;\; C_0') &  (-N_1 \;\; -C_1') \cr  (X_{01} \;\;
X_{02}) &  (-X_{11} \; -X_{12}) \end{pmatrix}:(H^n\oplus \cH')\oplus (H^n\oplus
\cH')\to \hat\cK\oplus\hat\cK
\end{equation*}
is the operator satisfying the relations $X^* J_{\hat\cK}X=J_\cH$ and $0\in\rho
(X)$. Moreover let $\Om_\tau (\cd)$ be the canonical characteristic matrix
corresponding to $\tau =\{(C_0,C_1); \hat\cK\}$. Then the operators
\begin{gather}
\G_0^\Om y=\{-y^{(2)}(0), y^{(1)}(0)\}+N'^{-1}C_0'\G_0' y- N'^{-1}C_1'\G_1'y
\;(\in\HH)\qquad\qquad\label{4.91}\\
\G_1^\Om y=N'^*
(X_{01}y^{(2)}(0)+X_{11}y^{(1)}(0)+X_{02}\G_0'y-X_{12}\G_1'y)\;(\in\HH), \quad
y\in\cD\label{4.92}
\end{gather}
form a boundary triplet $\{\HH,\G_0^\Om,  \G_1^\Om \}$ for $L$ with the
corresponding Weyl function
\begin {equation}\label{4.92a}
M_\Om(\l)=\Om_\tau (\l) +\wt D, \qquad \wt D=\wt D^*\in [\HH].
\end{equation}
\end{corollary}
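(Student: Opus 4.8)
The plan is to recognize $\{\HH,\G_0^\Om,\G_1^\Om\}$ as a \emph{renormalization} of the boundary triplet produced by Proposition \ref{pr4.15}, and then to transport the Weyl–function formula \eqref{4.81} through that renormalization, matching it against \eqref{4.65}. First I would apply Proposition \ref{pr4.15} to the pair $\cP=\{C_0,C_1\}$ of \eqref{4.90}. Since $0\in\rho(N)$ forces $\dim\hat\cK=2n\dim H=\dim\cH$, the auxiliary space $\cK'$ is trivial; hence the lower blocks $C_{02}',C_{12}'$ are absent, the condition \eqref{4.77} defining $\hat A^*$ is vacuous, and $\hat A^*=L$. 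Thus $\hat\Pi=\{\hat\cK,\hat\G_0,\hat\G_1\}$ from Proposition \ref{pr4.15}, 3) is already a boundary triplet for $L$, with $\hat\G_0 y=N_0y^{(2)}(0)+N_1y^{(1)}(0)+C_0'\G_0'y-C_1'\G_1'y$ and $\hat\G_1 y=X_{01}y^{(2)}(0)+X_{11}y^{(1)}(0)+X_{02}\G_0'y-X_{12}\G_1'y$ (the lower blocks $X_{03},X_{04},X_{13},X_{14}$ being absent), and with Weyl function $\hat M(\l)=m_\cP(\l)+\hat D$ by \eqref{4.81}.

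Next I would verify the two algebraic identities $\G_0^\Om=N'^{-1}\hat\G_0$ and $\G_1^\Om=N'^*\hat\G_1$, where $N'=(-N_0\;\;N_1)$ is invertible because $N'=N\,\mathrm{diag}(-I_{H^n},I_{H^n})$ and $0\in\rho(N)$. Applying $N'$ to \eqref{4.91} collapses $\{-y^{(2)}(0),y^{(1)}(0)\}$ to $N_0y^{(2)}(0)+N_1y^{(1)}(0)$ and reproduces exactly $\hat\G_0 y$, so $N'\G_0^\Om=\hat\G_0$; and \eqref{4.92} is visibly $N'^*\hat\G_1 y$. This reduces everything to understanding how a boundary triplet transforms under the substitution $(\hat\G_0,\hat\G_1)\mapsto(R\hat\G_0,S\hat\G_1)$ with $R=N'^{-1}$, $S=N'^*$.

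It then remains to check that this substitution again yields a boundary triplet and to compute its Weyl function. Green's identity is preserved precisely because $R^*S=(N'^{-1})^*N'^*=I_{\HH}$, and surjectivity of $(\G_0^\Om\;\;\G_1^\Om)^\top$ is inherited from that of $(\hat\G_0\;\;\hat\G_1)^\top$ since $R,S$ are isomorphisms; hence $\{\HH,\G_0^\Om,\G_1^\Om\}$ is a boundary triplet for $L$. From $\hat\G_1\up\gN_\l(L_0)=\hat M(\l)\hat\G_0\up\gN_\l(L_0)$ one gets $\G_1^\Om\up\gN_\l(L_0)=S\hat M(\l)R^{-1}\G_0^\Om\up\gN_\l(L_0)$, so the associated Weyl function is $M_\Om(\l)=N'^*\hat M(\l)N'=N'^*m_\cP(\l)N'+N'^*\hat D N'$. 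Finally, $0\in\rho(N)$ gives $\t^\perp=\Ker N'=\{0\}$ and $\t=\HH$ by \eqref{4.58}, so the block form \eqref{4.64} degenerates to $\Om_\tau=\Om_0$ and \eqref{4.65} reads $\Om_\tau(\l)=N'^*m_\cP(\l)N'+C$ with $C=C^*$. Subtracting yields $M_\Om(\l)=\Om_\tau(\l)+\wt D$ with $\wt D=N'^*\hat D N'-C=\wt D^*$, which is \eqref{4.92a}.

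The main obstacle is the careful bookkeeping in the middle step: keeping the sign convention of $N'=(-N_0\;\;N_1)$ straight against $N=(N_0\;\;N_1)$, and verifying the relation $R^*S=I_{\HH}$, since it is exactly this that upgrades the renormalized pair from two surjective maps to a genuine boundary triplet. Once these identities are pinned down, the Weyl-function transformation and the reduction through \eqref{4.65} are routine.
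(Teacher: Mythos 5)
Your proof is correct and follows essentially the same route as the paper: apply Proposition \ref{pr4.15} to the pair \eqref{4.90} (so that $\cK'=\{0\}$ and $\hat A^*=L$), identify $\G_0^\Om=N'^{-1}\hat\G_0$ and $\G_1^\Om=N'^*\hat\G_1$, obtain $M_\Om(\l)=N'^*\hat M(\l)N'$ with $\hat M(\l)=m_\cP(\l)+\hat D$, and conclude via \eqref{4.64}, \eqref{4.65} using $\t^\perp=\Ker N'=\{0\}$. The only deviations are inessential: where the paper invokes Proposition 3.13 of \cite{DM00} for the transformed triplet and its Weyl function, you verify this directly (correctly, since $(N'^{-1})^*N'^*=I_{\HH}$ preserves Green's identity and surjectivity is inherited from the isomorphisms $N'^{-1}$, $N'^*$); and your dimension count deducing $\cK'=\{0\}$ is superfluous---and would not by itself be conclusive when $\dim H=\infty$---since $\cK'=\{0\}$ is immediate from the fact that the pair \eqref{4.90} takes values in $\hat\cK$.
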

\begin{proof}
Comparing \eqref{4.7} with  \eqref{4.90} one obtains $\cK'=\{0\}$ and hence
$C_{02}'=C_{12}'=0$.  Therefore the extensions \eqref{4.77} and \eqref{4.76}
take the form $\hat A^*=L, \; \hat A=L_0$ and by Proposition \ref{pr4.15} the
operators \eqref{4.78} and \eqref{4.79} form the boundary triplet $\hat \Pi=
\{\hat \cK, \hat \G_0,\hat\G_1\} $ for $L$ with the Weyl function \eqref{4.81}.
Next, according to Proposition 3.13 in \cite{DM00} the collection
$\{\HH,\G_0^\Om,  \G_1^\Om \}$ with $\G_0^\Om=N'^{-1}\hat\G_0$ and $\G_1^\Om=
N'^*\hat\G_1$ is a boundary triplet for $L$ and the corresponding Weyl function
is
\begin {equation}\label{4.93}
M_\Om(\l)=N'^* \hat M(\l) N'
\end{equation}
Now the equalities \eqref{4.91} and \eqref{4.92} are implied by  \eqref{4.78},
\eqref{4.79} and the  relation
\begin {equation*}
N'^{-1}(N_0 y^{(2)}(0)+N_1 y^{(1)}(0))=N'^{-1}N'\{-y^{(2)}(0),
y^{(1)}(0)\}=\{-y^{(2)}(0), y^{(1)}(0)\}.
\end{equation*}
Moreover since $0\in\rho (N')(\Leftrightarrow 0\in\rho (N))$, it follows from
\eqref{4.58} that $\t^\perp =\{0\}$ and by \eqref{4.64}, \eqref{4.65} one has
\begin {equation*}
\Om_\tau(\l)=\Om_0(\l)=N'^* m_\cP(\l)N'+C, \qquad C=C^*\in [\HH].
\end{equation*}
Combining this equality with \eqref{4.81} and \eqref{4.93} one obtains the
relation \eqref{4.92a} for $M_\Om (\l)$.
\end{proof}
\begin{remark}\label{rem4.18}
 It follows from Corollary \ref{cor4.14a} that in the case $\dim H<\infty$ the
conditions of Corollary \ref{cor4.17} are necessary (and, by the statement of
this corollary, sufficient) for the canonical characteristic matrix
$\Om_\tau(\cd)$ be the Weyl function of the operator $L_0$.
\end{remark}
\section{Spectral functions of differential operators}
\subsection{The space $L_2(\S ; \cH)$}
Let $\cH$ be a separable Hilbert space.
\begin{definition}\label{def5.1}
A nondecreasing operator function $\S:\bR\to [\cH]$ is called a distribution if
it is strongly left continuous and satisfies the equality $\S(0)=0$.
\end{definition}
Let $\S:\bR\to [\cH]$ be a distribution and let $f(\cd), \; g(\cd)$ be vector
functions defined on the segment $[\a, \b]$ with values  in $\cH$. Consider the
Riemann-Stieltjes integral \cite{Ber}
\begin {equation}\label{5.1}
\int_\a^\b (d\,\S(t)f(t),g(t))=\lim_{d_\pi\to 0} \sum_{k=1}^n
((\S(t_k)-\S(t_{k-1}))f(\xi_k), g(\xi_k)),
\end{equation}
where $\pi=\{\a=t_0<t_1< \dots <t_n=\b\}$ is a partition of $[\a,\b]$,
$\;\xi_k\in [t_{k-1}, t_k]$ and $d_\pi$ is the diameter of  $\pi$. As is known
(see for instance \cite{MalMal03}) in the case $\dim \cH=\infty$ there exist a
distribution $\S(\cd)$ and continuous functions $f(\cd)$ and $g(\cd)$ for which
the integral \eqref{5.1} does not exist. At the same time holomorphy of
$f(\cd)$ and $g(\cd)$ on the segment $[\a, \b]$ is a sufficient condition for
existence of such an integral \cite{Shm71}.
\begin{definition}\label{def5.2}
A function $f:[\a,\b)\to \cH$ will be called piecewise holomorphic if there is
a partition $\a=t_0<t_1< \dots  <t_n=\b$ such that each restriction $f\up
[t_{k-1}, t_k)$ admits a holomorphic continuation $\wt f_k(\cd)$ on some
interval $(\wt t_{k-1}, \wt t_k)\supset [t_{k-1}, t_k]$.

A function $f:\bR\to \cH$ will be called piecewise holomorphic if it is so on
each finite half-interval $[\a, \b)$.
\end{definition}
It follows from Definition \ref{def5.2} that a piecewise holomorphic function
is strongly right continuous.

Let $\S:\bR\to [\cH]$ be a distribution and let $f,\; g: [\a, \b)\to \cH$ be a
pair of piecewise holomorphic functions. It is clear that there exists a
partition of $[\a,\b]$ satisfying the conditions of Definition \ref{def5.2} for
both functions $f(\cd)$ and $g(\cd)$. By using such a partition introduce the
integral
\begin {equation}\label{5.2}
\int_{[\a,\b)} (d\,\S(t)f(t),g(t))= \sum_{k=1}^n \int_{t_{k-1}}^{t_k} (d\,\S(t)
\wt  f_k(t), \wt g_k(t)).
\end{equation}
Note that for a pair of continuous functions $f,\; g: [\a, \b]\to \cH$
piecewise holomorphic on $[\a,\b)$ there exists the integral \eqref{5.1} which
coincides with that of \eqref{5.2}.

For a given distribution $\S:\bR\to[\cH]$ denote by $Hol(\S,\cH)$ the set of
all piecewise holomorphic functions $f:\bR\to \cH$ such that
\begin {equation*}
\int_{\bR}(d\,\S(t)f(t),f(t)):=\lim_{[\a,\b)\to \bR}\int _{[\a,\b)}
(d\,\S(t)f(t),f(t))<\infty.
\end{equation*}
One can easily prove that for each pair $f,g\in Hol(\S,\cH)$ there exists the
integral
\begin {equation}\label{5.3}
(f,g)_{Hol(\S,\cH)}=\int_{\bR}(d\,\S(t)f(t),g(t)):=\lim_{[\a,\b)\to \bR}\int
_{[\a,\b)} (d\,\S(t)f(t),g(t)).
\end{equation}
This implies that $Hol(\S,\cH)$ is a linear space with the semi-definite scalar
product \eqref{5.3}.

Next recall the definition of the space $L_2(\S;H)$ as it was given in the book
\cite{Ber}.

A function $f:\bR\to \cH$  is called finite-dimensional if there is a subspace
$\cH_f\subset \cH$ such that $\dim \cH_f<\infty$ and $f(t)\in \cH_f, \;
t\in\bR$. For a given distribution $\S:\bR\to[\cH]$ denote by $C_{00}(\cH)$ the
linear space of all strongly continuous finite-dimensional functions
$f:\bR\to\cH$ with compact support $supp \;f$. Clearly the equality
\begin {equation}\label{5.4}
(f,g)_{L_2(\S;\cH)}=\int_\bR (d\,\S(t)f(t),g(t)):=\int_\a^\b
(d\,\S(t)f(t),g(t)) , \quad f,g\in C_{00}(\cH)
\end{equation}
with $[\a,\b]\supset (supp \, f \cup supp\,g)$ defines the semi-definite scalar
product on $C_{00}(\cH)$. The completion of $C_{00}(\cH)$ with respect to this
 product is a semi-Hilbert space $\wt L_2(\S;\cH)$. The quotient of
$\wt L_2(\S;\cH)$ over the kernel $\{f\in \wt L_2(\S;\cH): (f,f)_{L_2
(\S;\cH)}=0\}$ is the Hilbert space $L_2(\S;H)$.

Denote by $Hol_0(\S,\cH)$ the set of all strongly continuous, piecewise
holomorphic and finite dimensional functions $f:\bR\to \cH$ with a compact
support. It is clear that $Hol_0(\S,\cH)=Hol(\S,\cH)\cap C_{00}(\cH)$ and
consequently $Hol_0(\S,\cH)$ is a linear manifold both in $Hol(\S,\cH)$ and
$C_{00}(\cH)$. Moreover the semiscalar products \eqref{5.3} and \eqref{5.4}
coincide on $Hol_0(\S,\cH)$.

By using the Taylor expansions of the function $f\in Hol(\S,\cH)$ one can prove
the following proposition.
\begin{proposition}\label{pr5.3}
The set $Hol_0(\S,\cH)$ is a dense linear manifold both in $Hol(\S,\cH)$ and
$C_{00}(\cH)$, which implies that the closure of  $ Hol(\S,\cH)$ coincides with
$ \wt L_2(\S;\cH)$. In other words the semi-Hilbert space $\wt L_2(\S;\cH)$ can
be considered as the completion of $Hol(\S,\cH)$.
\end{proposition}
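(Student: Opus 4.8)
The plan is to establish the two density assertions separately and then read off the final identification formally. Because $Hol_0(\S,\cH)=Hol(\S,\cH)\cap C_{00}(\cH)$ and the semiscalar products \eqref{5.3}, \eqref{5.4} coincide on $Hol_0(\S,\cH)$, once I know that $Hol_0(\S,\cH)$ is dense in both $Hol(\S,\cH)$ and $C_{00}(\cH)$ the conclusion is immediate: $\wt L_2(\S;\cH)$ is by definition the completion of $C_{00}(\cH)$, hence also the completion of its dense linear manifold $Hol_0(\S,\cH)$; and since every $f\in Hol(\S,\cH)$ is then a seminorm limit of elements of $Hol_0(\S,\cH)$, the closure of $Hol(\S,\cH)$ equals the closure of $Hol_0(\S,\cH)$, i.e. $\wt L_2(\S;\cH)$.

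The common tool for both densities is a seminorm estimate for finite-dimensional functions. If $h(\cd)$ has values in a subspace with orthonormal basis $e_1,\dots,e_d$ and $h=\sum_i c_i(\cd)e_i$, then $\int_\a^\b (d\S(t)h(t),h(t))=\sum_{i,j}\int_\a^\b \overline{c_i(t)}c_j(t)\,d\s_{ij}(t)$ with $\s_{ij}(t)=(\S(t)e_j,e_i)$; since $\S$ is nondecreasing, Cauchy--Schwarz for the nonnegative operator $\S(t)-\S(s)$ gives $|\s_{ij}(t)-\s_{ij}(s)|^2\le(\s_{ii}(t)-\s_{ii}(s))(\s_{jj}(t)-\s_{jj}(s))$, so each $\s_{ij}$ has finite variation on $[\a,\b]$ and
\[\int_\a^\b (d\S\,h,h)\le C\Bigl(\sup_{[\a,\b]}\|h\|\Bigr)^2\]
with $C$ depending only on $d$ and the variations of the $\s_{ij}$. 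For density in $C_{00}(\cH)$ I take $f\in C_{00}(\cH)$ with $\mathrm{supp}\,f\subset(\a,\b)$ and values in a finite-dimensional $\cH_f$, approximate $f$ uniformly on $[\a,\b]$ by an $\cH_f$-valued polynomial via the vector-valued Weierstrass theorem, subtract the (small) linear interpolant of its endpoint values so that the polynomial vanishes at $\a,\b$, and extend it by zero; the result lies in $Hol_0(\S,\cH)$ and is seminorm-close to $f$ by the displayed estimate.

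For density in $Hol(\S,\cH)$ the Taylor expansions enter. Given $f\in Hol(\S,\cH)$, I would first use $\int_\bR(d\S f,f)<\infty$ to pick $R$ with $\int_{|t|>R}(d\S f,f)<\e$, reducing to the finitely many holomorphy pieces $[t_{k-1},t_k]$ inside $[-R,R]$. On each piece I replace the holomorphic continuation $\wt f_k$ by a partial sum of its Taylor series: these converge to $\wt f_k$ uniformly on the compact piece and, being polynomials $\sum_{j\le m}a_{k,j}(t-c_k)^j$, automatically take values in the finite-dimensional span of their coefficients. Thus truncation together with Taylor approximation simultaneously secures compact support, finite-dimensionality, and a uniformly small piecewise error whose seminorm contribution is controlled by the estimate above.

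The main obstacle is the remaining requirement that members of $Hol_0(\S,\cH)$ be strongly continuous: a general element of $Hol(\S,\cH)$ is only right continuous and may jump at the partition points, and where $\S$ carries an atom the seminorm genuinely feels the value of the function there. I would resolve this using the left continuity of $\S$ (Definition \ref{def5.1}). At each jump point $t_k$ I keep the value $f(t_k)=\wt f_{k+1}(t_k)$ unchanged, so that the atomic term $\bigl((\S(t_k^+)-\S(t_k))f(t_k),\,f(t_k)\bigr)$ is preserved, and I restore continuity by inserting a polynomial bridge on a small left-neighborhood $(t_k-\d,t_k)$; because $\S$ is left continuous, $\|\S(t_k)-\S(t_k-\d)\|\to0$ as $\d\to0$, so the $\S$-mass of the bridging intervals and hence their seminorm contribution vanish. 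Handling the truncation endpoints $\pm R$ the same way produces a function in $Hol_0(\S,\cH)$ arbitrarily seminorm-close to $f$, which finishes density in $Hol(\S,\cH)$ and, together with the second paragraph, the whole proposition.
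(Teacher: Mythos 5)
Your reduction to the two density statements, the finite-dimensional variation estimate based on $|\s_{ij}(t)-\s_{ij}(s)|^2\le(\s_{ii}(t)-\s_{ii}(s))(\s_{jj}(t)-\s_{jj}(s))$, and the Weierstrass argument for density of $Hol_0(\S,\cH)$ in $C_{00}(\cH)$ are all sound, and your overall route (truncation, Taylor approximation, bridging) is exactly what the paper's one-line hint envisages. However, your proof of density in $Hol(\S,\cH)$ --- the heart of the proposition --- has two steps that fail as written. First, the partial sums of the Taylor series of $\wt f_k$ centered at a single point of a piece need not converge uniformly on the whole piece $[t_{k-1},t_k]$: the series converges only in the disk reaching the nearest complex singularity of the continuation. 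Already for the scalar function $\wt f_k(t)=(1+t^2)^{-1}$ on a piece $[-5,5)$, which is holomorphic on a neighbourhood of $[-5,5]$, the Taylor series at $0$ diverges for $|t|>1$. You must first refine the partition (which Definition \ref{def5.2} permits, since restrictions of holomorphic continuations are again holomorphic continuations) so that each subpiece lies inside a disk of convergence; but then your piecewise-polynomial approximant acquires new jumps at the artificial junction points, so the bridging construction must be carried out there as well --- it is not a peripheral fix but the load-bearing part of the argument.

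Second, your justification of the bridging step uses a false claim: Definition \ref{def5.1} asserts only \emph{strong} left continuity of $\S$, which does not imply $\|\S(t_k)-\S(t_k-\d)\|\to 0$. For example, $\S(t)$ equal to multiplication by $\chi_{[0,t)}$ in $L_2(0,1)$ (extended by $0$ to the left and by $I$ to the right) is nondecreasing, satisfies $\S(0)=0$, and is strongly continuous, yet $\|\S(t)-\S(s)\|=1$ whenever $s\ne t$. So the ``$\S$-mass of the bridging interval'' does not vanish in operator norm, and your estimate collapses precisely for the infinite-dimensional-valued part of $f$. The step can be repaired, but only by exploiting the structure of the error: on $[t_k-\d,t_k]$ write $f-g=\bigl[\wt f_k(t)-\wt f_k(t_k)\bigr]+\bigl[\wt f_k(t_k)-g(t)\bigr]$. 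The second bracket stays within a bounded distance of the span of the two fixed vectors $\wt f_k(t_k)$ and $f(t_k)$, so its seminorm contribution tends to $0$ by strong left continuity tested on finitely many fixed vectors; the first bracket is $\sum_{m\ge 1}a_m(t-t_k)^m$ (Taylor expansion of $\wt f_k$ at $t_k$), and its seminorm over the bridge is at most $\|\S(t_k)-\S(t_k-\d)\|^{1/2}\sum_{m\ge 1}\d^m\|a_m\|\le C\d$, where only boundedness (by monotonicity), not smallness, of the norm of the increment is used. With these two repairs --- refined partitions with bridges at all junction points, and the strong-continuity version of the bridging estimate --- your argument does establish the proposition.
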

\begin{remark}
In connection with Proposition \ref{pr5.3} note that the intrinsic functional
description of the spaces $\wt L_2(\S;\cH)$ and $L_2(\S;\cH)$ in the case
$\dim\cH<\infty$ was obtained in \cite{Kac50}. Moreover in the case
$\dim\cH=\infty$ the description of these spaces in terms of the direct
integrals of Hilbert spaces can be found in the recent paper \cite{MalMal03}.
\end{remark}
\subsection{Spectral functions}\label{sub4.2}
Let $\Pi=\bta$ be a decomposing $D$-triplet \eqref{3.10} for $L$ and let
$\pair\in\RH$ be a Nevanlinna collection defined by \eqref{2.14} and
\eqref{3.25}, \eqref{3.26}. For this collection consider the boundary problem
\eqref{3.27}-\eqref{3.29}. According to Remark \ref{rem3.5a} this problem
defines the spectral function $F_\tau(t)$ of the operator $L_0$.

Next assume that $\wt\cK$ is a separable Hilbert space and $\f(\cd,\l):\D\to
[\wt\cK,H]$ is an operator solution of the equation \eqref{3.4} with the
constant initial data $\wt\f(0,\l)\equiv\wt\f_0(\in [\wt\cK,
H^{2n}]),\;\l\in\bC,$ such that $0\in\hat\rho (\wt\f_0)$. Denote by $\gH_0$ the
set of all functions $f\in\gH(=L_2(\D;H))$ with $supp f\subset [0,\b]$ ($\b<b$
depends on $f$) and consider the Fourier transform $g_f:\bR\to\wt\cK$ of a
function $f\in\gH_0$ given by
\begin {equation}\label{5.6}
g_f(s)=\int_0^b \f^*(t,s) f(t)\,dt.
\end{equation}
\begin{definition}\label{def5.5}
A distribution $\S(\cd)=\S_{\tau,\f}(\cd):\bR\to [\wt\cK]$ is called a spectral
function of the boundary problem \eqref{3.27}-\eqref{3.29} corresponding to the
solution $\f(\cd,\l)$ if for each function $f\in\gH_0$ the Fourier transform
\eqref{5.6} satisfies the equality
\begin {equation}\label{5.7}
((F_\tau(\b)-F_\tau(\a))f,f)_\gH=\int_{[\a,\b)} (d\S_{\tau,\f}(s)g_f(s),
g_f(s)), \quad [\a,\b)\subset\bR.
\end{equation}
\end{definition}
Note that the integral in the right hand part of \eqref{5.7} exists, because
the function $g_f(\cd)$ is holomorphic on $\bR$. Moreover by \eqref{5.7}
$g_f(\cd)\in Hol(\S_{\tau,\f},\wt\cK)$ and the following Parseval equality
holds
\begin {equation*}
(||f||_\gH^2=)\int_0^b ||f(t)||_H^2 \,dt=\int_\bR (d\S_{\tau,\f}(s)g_f(s),
g_f(s)) \bigl (=||g_f||^2_{L_2(\S_{\tau,\f};\wt\cK)}\bigr), \quad f\in\gH_0.
\end{equation*}
This implies that  the linear operator $V:\gH\to L_2(\S_{\tau,\f};\wt\cK) $
defined on the dense linear manifold $\gH_0\subset\gH$ by $(Vf)(s)=g_f(s)$ is
an isometry.
\begin{definition}\label{def5.6}
A spectral function $\S_{\tau,\f}(\cd)$ is called orthogonal if $V\gH=
L_2(\S_{\tau,\f};\wt\cK)$ or equivalently if the set of all Fourier transforms
$\{g_f(\cd):f\in\gH_0\}$ is dense in $L_2(\S_{\tau,\f};\wt\cK)$.
\end{definition}
\begin{theorem}\label{th5.7}
Let $\wt H$ be a Hilbert space with $\dim \wt H=2n\cd\dim H$, let $W\in [\wt
H,H^{2n}]$ be an isomorphism and let $Y_W(\cd,\l):\D\to [\wt H,H] $ be an
operator solution of the equation \eqref{3.4} with the initial data $\wt
Y_W(0,\l)=W,\;\l\in\bC$.  Then for each collection $\tau\in\RH$ there exists
the unique spectral function $\S_{\tau, W}:\bR\to [\wt H]$ of the boundary
problem \eqref{3.27}-\eqref{3.29} corresponding to the solution $Y_W(\cd,\l)$.
This function is defined by the equality
\begin {equation}\label{5.9}
\S_{\tau, W}(s)=s-\lim_{\d\to+0} w-\lim_{\e\to +0}\frac 1 \pi\int_{-\d}^{s-\d}
Im \,\Om_{\tau, W}(\s+ i\,\e)\,d\s,
\end{equation}
where $\Om_{\tau, W}:\CR\to [\wt H]$ is a Nevanlinna operator function given by
\begin {equation}\label{5.9a}
\Om_{\tau, W}(\l)=W^{-1}\Om_\tau(\l)W^{-1*}, \quad \l\in\CR.
\end{equation}
Moreover the spectral function $\S_{\tau, W}(\cd)$ is orthogonal if and only if
$\tau\in\RZ$.
\end{theorem}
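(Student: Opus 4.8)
The statement bundles three assertions --- existence, the explicit Stieltjes formula \eqref{5.9}, and the orthogonality criterion --- and my plan is to reduce all of them to the Green function already produced in the paper and then invert it. First I would note that $Y_0(\cd,\l)$ and $Y_W(\cd,\l)$ both solve \eqref{3.4}, with $\wt Y_0(0,\l)=I_{\HH}$ and $\wt Y_W(0,\l)=W$, so uniqueness of the Cauchy problem gives $Y_W(t,\l)=Y_0(t,\l)W$. Substituting $Y_0=Y_W W^{-1}$ into \eqref{3.31} and using \eqref{5.9a} together with $\cJ_W:=W^{-1}J_{H^n}W^{-1*}$ reproduces the analogue of \eqref{4.49},
\[
G(x,t,\l)=Y_W(x,\l)\bigl(\Om_{\tau,W}(\l)+\tfrac12\text{sgn}(t-x)\cJ_W\bigr)Y_W^*(t,\ov\l),
\]
so this step is exactly the computation of Theorem \ref{th4.11}, 1) with $W$ in place of $W'$. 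Since $J_{H^n}^*=-J_{H^n}$ we have $\cJ_W^*=-\cJ_W$. Because $\Om_\tau(\cd)$ is Nevanlinna with $s\text{-}\lim_{y\to\infty}\Om_\tau(i\,y)/y=0$ by \eqref{3.37}, the conjugate $\Om_{\tau,W}(\cd)$ is Nevanlinna with the same growth, hence carries no linear term in its integral representation; the Stieltjes--Perron formula \eqref{5.9} then defines a genuine distribution $\S_{\tau,W}(\cd)$ whose $d\S_{\tau,W}$ is the spectral measure of $\Om_{\tau,W}$.

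Next I would prove the Parseval equality \eqref{5.7}. Pairing the Green function with $f\in\gH_0$ and setting $\hat g_f(\l):=\int_0^b Y_W^*(t,\ov\l)f(t)\,dt$ (an entire function of $\l$ with $\hat g_f(\s)=g_f(\s)$ for real $\s$ by \eqref{5.6}), the diagonal term of $(R_\tau(\l)f,f)_\gH$ equals $(\Om_{\tau,W}(\l)\hat g_f(\l),\hat g_f(\ov\l))$, while the singular term
\[
S(\l)=\tfrac12\int_0^b\!\!\int_0^b \text{sgn}(t-x)\,(\cJ_W Y_W^*(t,\ov\l)f(t),Y_W^*(x,\l)f(x))\,dt\,dx
\]
satisfies $\ov{S(\s)}=S(\s)$ for real $\s$. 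This is the key cancellation: it follows from $\cJ_W^*=-\cJ_W$ together with the antisymmetry of $\text{sgn}(t-x)$ under the relabelling $t\leftrightarrow x$, the compact support of $f$ keeping everything finite. Thus $\text{Im}\,S$ vanishes on $\bR$ and contributes nothing after inversion. Feeding $(R_\tau(\l)f,f)_\gH$ into the Stieltjes inversion formula for the (projected) spectral function of the selfadjoint operator $\wt A$, $((F_\tau(\b)-F_\tau(\a))f,f)_\gH=\lim_{\d}\lim_{\e}\tfrac1\pi\int_{\a+\d}^{\b-\d}\text{Im}(R_\tau(\s+i\e)f,f)_\gH\,d\s$, and using $\hat g_f(\s\pm i\e)\to g_f(\s)$, reproduces the right-hand side of \eqref{5.7} with $\S_{\tau,W}$ as in \eqref{5.9}. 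I expect the main obstacle to be analytic rather than structural here: one must justify interchanging the $\e\to0$ and $\d\to0$ limits with the double integral and absorb the mismatch between $\hat g_f(\l)$ and $\hat g_f(\ov\l)$ inside $\text{Im}$, which I would control via holomorphy of $\hat g_f$ and finiteness of $d\S_{\tau,W}$ on compacta.

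Uniqueness is where the hypothesis that $W$ is an isomorphism (the full-dimensional case $\dim\wt H=2n\cd\dim H$) is decisive. If $\S_1,\S_2$ both satisfy \eqref{5.7}, then $\int_{[\a,\b)}(d(\S_1-\S_2)(s)g_f(s),g_f(s))=0$ for every interval and every $f\in\gH_0$, so each scalar measure $(d(\S_1-\S_2)g_f,g_f)$ vanishes. For fixed real $s$ the set $\{g_f(s):f\in\gH_0\}=\ov{\text{span}}\{Y_W^*(t,s)h:t\in\D,\,h\in H\}$ exhausts $\wt H$, since $Y_W(\cd,s)v\equiv0$ forces $v=0$ (because $\wt Y_W(0,s)=W$ is invertible). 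Polarization then yields $d(\S_1-\S_2)=0$, whence $\S_1=\S_2$ using $\S(0)=0$ and left continuity.

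Finally, for the orthogonality criterion I would identify the isometry $V:f\mapsto g_f$ from $\gH$ into $L_2(\S_{\tau,W};\wt H)$ with the restriction to $\gH$ of the spectral-representation map of the exit-space extension $\wt A$. Since $Y_W(\cd,\l)$ is a complete solution system, the classical full-dimensional eigenfunction expansion for the selfadjoint $\wt A$ furnishes a unitary $\wt V:\wt\gH\to L_2(\S_{\tau,W};\wt H)$ diagonalizing $\wt A$ and extending $V$. Consequently $V$ is onto (that is, $\S_{\tau,W}$ is orthogonal in the sense of Definition \ref{def5.6}) if and only if $\wt V(\gH)=\wt V(\wt\gH)$, i.e. $\gH=\wt\gH$; by minimality of the extension this means $\wt A$ has no exit part, so $R_\tau(\cd)$ is a canonical resolvent, which by Theorem \ref{th3.5} holds exactly when $\tau\in\RZ$. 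Constructing $\wt V$ --- the completeness of the expansion in $\wt\gH$ --- is the only step for which I would invoke the classical theory; every other ingredient is internal to the paper.
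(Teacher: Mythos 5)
Your proposal is correct and takes essentially the same approach as the paper: the paper gives no written-out proof of Theorem \ref{th5.7}, deferring instead to the Stieltjes--Liv\v{s}ic inversion formula applied, as in Shtraus and Bruk, to the conjugated characteristic matrix $\Om_{\tau,W}(\l)=W^{-1}\Om_\tau(\l)W^{-1*}$, and that is exactly the argument you reconstruct (transformed Green function, inversion of the Nevanlinna function $\Om_{\tau,W}$ using the growth condition \eqref{3.37}, reality of the singular term on $\bR$, pointwise density of $\{g_f(s):f\in\gH_0\}$ in $\wt H$ for uniqueness, and the minimal-dilation identification for the orthogonality criterion). Two minor points: the inner integral in your inversion formula should be $\int_{\a-\d}^{\b-\d}$ (as in \eqref{5.9}) rather than $\int_{\a+\d}^{\b-\d}$ so that the half-open interval $[\a,\b)$, not the open one, is recovered, and the completeness statement $\wt V(\wt\gH)=L_2(\S_{\tau,W};\wt H)$ that you import from the classical theory actually follows from the same pointwise-density argument you already use for uniqueness, so nothing external is really needed there.
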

One can prove Theorem \ref{th5.7} by using the Stieltjes -Liv$\breve{\text
s}$ic formula \cite{KacKre,Shm71} in the same way as in \cite{Sht57} (the
scalar case $\dim H=1$) and \cite{Bru74} (the case $\dim H\leq \infty$).
Moreover in the scalar case other methods of the proof can be found in the
books \cite{Nai,DunSch}.
\begin{theorem}\label{th5.8}
Assume that under the conditions of  Theorem \ref{th5.7} $\S_{\tau, W}(\cd)$ is
a spectral function of the boundary problem \eqref{3.27}-\eqref{3.29} and
$V:\gH\to L_2(\S_{\tau,W};\wt H)$ is the corresponding isometry given by the
Fourier transform \eqref{5.6} with $\f (t,s)=Y_W(t,s)$. Moreover, let
$Hol^0(\wt H)$ be a linear manifold of all piecewise holomorphic functions
$g:\bR\to \wt H$ with compact support. Then $Hol^0(\wt H)$ is dense in
$L_2(\S_{\tau,W};\wt H)$ and
\begin {equation}\label{5.11}
(V^*g)(t)=\int_\bR Y_W(t,s)\, d\S_{\tau,W}(s)g(s), \quad g=g(s)\in Hol^0(\wt
H),
\end{equation}
where $V^*:L_2(\S_{\tau,W};\wt H)\to \gH$ is the adjoint operator and the
integral is understood as the sum of integrals similarly to \eqref{5.2}. In
particular formula \eqref{5.11} implies that the inverse Fourier transform is
\begin {equation}\label{5.12}
f(t)=\int_\bR Y_W(t,s)\, d\S_{\tau,W}(s)g_f(s).
\end{equation}
\end{theorem}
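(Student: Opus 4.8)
The plan is to prove the three assertions in turn, using the function spaces constructed in Section~\ref{sub4.2} (and Section 3.1) together with the isometry $V$ furnished by Theorem~\ref{th5.7}. Throughout I abbreviate $\S:=\S_{\tau,W}$.

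First I would settle the density claim by squeezing $Hol^0(\wt H)$ between two sets whose density is already known. Every $g\in Hol^0(\wt H)$ is piecewise holomorphic with compact support, so $\int_\bR(d\S(s)g(s),g(s))$ reduces to a finite sum of integrals over the pieces and is finite; hence $Hol^0(\wt H)\subseteq Hol(\S,\wt H)\subseteq\wt L_2(\S;\wt H)$. Conversely a function in $Hol_0(\S,\wt H)$ is in particular piecewise holomorphic with compact support, so $Hol_0(\S,\wt H)\subseteq Hol^0(\wt H)$. Since by Proposition~\ref{pr5.3} the smaller set $Hol_0(\S,\wt H)$ is already dense in $\wt L_2(\S;\wt H)$, the intermediate set $Hol^0(\wt H)$ is dense as well, and density in $L_2(\S;\wt H)$ follows after passing to the quotient by the kernel of the semiscalar product.

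For the adjoint formula \eqref{5.11} I would introduce the operator $U$ on $Hol^0(\wt H)$ defined by $(Ug)(t)=\int_\bR Y_W(t,s)\,d\S(s)g(s)$; as $g$ has compact support $[\a,\b]$ and $Y_W(t,\cd)$ is entire, this Stieltjes integral (read as a finite sum in the sense of \eqref{5.2}) exists for each $t$. To identify $U=V^*$ I would test against the dense manifold $\gH_0$: for $f\in\gH_0$ and $g\in Hol^0(\wt H)$ compute $(Vf,g)_{L_2(\S)}=\int_\bR(d\S(s)g_f(s),g(s))$, substitute $g_f(s)=\int_0^b Y_W^*(t,s)f(t)\,dt$ from \eqref{5.6}, and interchange the Lebesgue integral in $t$ with the Stieltjes integral in $s$. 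Using $\S(s)=\S^*(s)$ and the adjoint relation for $Y_W$, the integrand rearranges to $(f(t),Y_W(t,s)\,d\S(s)g(s))$, and after the interchange one obtains $(Vf,g)_{L_2(\S)}=(f,Ug)_\gH$. Since $\gH_0$ is dense in $\gH$ and $(Vf,g)_{L_2(\S)}=(f,V^*g)_\gH$ by definition of the adjoint, this forces $Ug=V^*g$ for all $g\in Hol^0(\wt H)$, which is \eqref{5.11} (and incidentally shows $Ug\in\gH$).

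The inverse transform \eqref{5.12} I would deduce from \eqref{5.11} by truncation and continuity. Because $V$ is an isometry one has $V^*V=I_\gH$, so $f=V^*g_f$ for every $f\in\gH_0$; the only difficulty is that $g_f$ is holomorphic on all of $\bR$ and need not have compact support, so $g_f\notin Hol^0(\wt H)$ in general. To remedy this I would cut $g_f$ off to $[\a,\b)$ (extending by zero), obtaining $g_f^{[\a,\b)}\in Hol^0(\wt H)$ — admissible since Definition~\ref{def5.2} permits jumps at the partition points $\a,\b$ — and note $g_f^{[\a,\b)}\to g_f$ in $L_2(\S;\wt H)$ as $[\a,\b)\to\bR$, because $\|g_f\|_{L_2(\S)}=\|f\|_\gH<\infty$. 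Applying \eqref{5.11} to $g_f^{[\a,\b)}$ and using boundedness of $V^*$ to pass to the limit yields $f=V^*g_f=\int_\bR Y_W(t,s)\,d\S(s)g_f(s)$ in $\gH$, the integral being understood in the limiting sense of \eqref{5.2}. I expect the main obstacle to be the rigorous justification of the Fubini-type interchange in the second step between $\int_0^b\,dt$ and the operator Riemann--Stieltjes integral $\int_\bR d\S(s)$: one must approximate the latter by its Riemann--Stieltjes sums, interchange with the $t$-integral (legitimate for finite sums), and pass to the limit, controlling the error uniformly via the compactness of both supports and the joint continuity and holomorphy of the integrand.
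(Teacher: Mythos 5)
Your proposal cannot be compared with a proof in the paper, because the paper gives none: after the statement it simply refers the case $\dim H<\infty$ to \cite{Nai,DunSch,Sht57} and notes that for $\dim H=\infty$ only the weaker assertion \eqref{5.12} is contained in \cite{Bru74}. So your self-contained argument is necessarily a different route, and its skeleton is sound. The density claim follows exactly as you say from the sandwich $Hol_0(\S_{\tau,W},\wt H)\subset Hol^0(\wt H)\subset Hol(\S_{\tau,W},\wt H)$ and Proposition \ref{pr5.3}. Identifying the integral operator $U$ with $V^*$ by testing against the dense manifold $\gH_0$ is the right mechanism; note that the resulting bound $|(f,Ug)_\gH|\leq\|V\|\,\|g\|\,\|f\|$ for all $f\in\gH_0$, together with continuity of $(Ug)(t)$ in $t$, is what actually puts $Ug$ in $\gH$, as you indicate. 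The truncation argument for \eqref{5.12} is also correct: $V^*V=I_\gH$, the cut-offs $g_f^{[\a,\b)}$ do lie in $Hol^0(\wt H)$ because Definition \ref{def5.2} tolerates jumps at partition points, and $\|g_f-g_f^{[\a,\b)}\|^2_{L_2(\S_{\tau,W};\wt H)}=\|g_f\|^2-\int_{[\a,\b)}(d\S_{\tau,W}(s)g_f(s),g_f(s))\to 0$ since the full integral converges.

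The one step you must flesh out, and where the entire difficulty of the case $\dim H=\infty$ sits, is the Fubini-type interchange; your appeal to ``joint continuity and holomorphy'' should rest on holomorphy alone. In infinite dimensions a nondecreasing $[\wt H]$-valued distribution need not have finite norm-variation (take increments of rank one, $e_k\otimes e_k$), so there is no estimate of the form $\sum_k\|\S(s_k)-\S(s_{k-1})\|\leq C$, and continuity of the integrand does not even guarantee that your Riemann--Stieltjes sums $u_\pi(t)=\sum_k Y_W(t,\xi_k)(\S(s_k)-\S(s_{k-1}))g(\xi_k)$ converge -- this is precisely the paper's remark that piecewise holomorphy is essential for \eqref{5.11}. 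What saves the argument is the mechanism of \cite{Shm71}: expand $Y_W(t,\cd)$ and $g(\cd)$ in Taylor series on each piece; for scalar coefficients $c_k$ positivity of the increments gives $\bigl\|\sum_k c_k(\S(s_k)-\S(s_{k-1}))\bigr\|\leq \text{const}\cd\max_k|c_k|\cd\|\S(\b)-\S(\a)\|$, and Cauchy estimates, uniform in $t$ on compacts because $Y_W$ is bounded on compact $t$-sets times complex neighborhoods of $[\a,\b]$, then yield convergence of $u_\pi(t)$ uniformly in $t$, which is exactly what you need to pass the limit under $\int_0^b(f(t),\cd)\,dt$. With that lemma supplied your proof is complete, and it buys something the paper's citations do not: a unified treatment of $\dim H\leq\infty$, including the adjoint formula \eqref{5.11} and the density statement, of which \cite{Bru74} proves only the inversion formula.
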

In the case $\dim H<\infty$ the proof of Theorem \ref{th5.8} can be found in
\cite{Nai,DunSch,Sht57}. In the case $\dim H=\infty$ a somewhat weaker result
(only the inverse transform \eqref{5.12}) is contained in \cite{Bru74}. In this
connection note that in the case $\dim H=\infty$ the piecewise holomorphy of a
function $g(\cd)$ is essential, because otherwise the integral in \eqref{5.11}
may not exist.

Our next goal is to obtain a description of all spectral functions $\S_
{\tau,W}(\cd)$ immediately in terms of a boundary parameter $\tau$. Namely,
using the block representations \eqref{3.21} and \eqref{3.22} of the Weyl
functions $M_\pm(\cd)$ introduce the operator functions $\Om_{\tau_0}(\l)(\in
[H^{2n}]), \; S_+(\l)$ $(\in [\cH_0,H^{2n}])$ and $S_-(z)(\in [\cH_1,H^{2n}])$
by setting
\begin{gather}
\Om_{\tau_0}(\l)=\begin{pmatrix} m(\l) & -\tfrac 1 2 I_{H^n}\cr  -\tfrac 1 2
I_{H^n} & 0 \end{pmatrix}: H^n \oplus H^n \to H^n \oplus H^n, \quad
\l\in\CR\label{5.15}\\
S_+(\l)=\begin{pmatrix} -m(\l)& -M_{2+}(\l) \cr I_{H^n} & 0
\end{pmatrix}:H^n\oplus\cH_0'\to H^n\oplus H^n,
\quad \l\in\bC_+\label{5.16}\\
S_-(z)=\begin{pmatrix} -m(z)& -M_{2-}(z) \cr I_{H^n} & 0
\end{pmatrix}:H^n\oplus\cH_1'\to H^n\oplus H^n, \quad z\in\bC_-.\label{5.17}
\end{gather}
Note that $\Om_{\tau_0}(\l)$ is a characteristic matrix  corresponding to the
collection $\tau_0=\{\tau_{0+},\tau_{0-} \}\in\RH$ with
$\tau_{0+}=\{0\}\oplus\cH_1(\in\CA)$.
\begin{theorem}\label{th5.9}
Let the assumptions of Theorem \ref{th5.7} be satisfied and let
$\Om_{\tau_0,W}(\l)(\in [\wt H]), \; S_{W,+}(\l)(\in [\cH_0, \wt H])$ and
$\S_{W,-}(z)(\in [\cH_1, \wt H])$ be the operator functions given by
\begin {gather*}
\Om_{\tau_0,W}(\l)=W^{-1}\Om_{\tau_0}(\l)W^{-1*}, \;\;\l\in\CR; \\
S_{W,+}(\l)=W^{-1}S_+(\l), \;\; \l\in\bC_+; \qquad S_{W,-}(z)=W^{-1}S_-(z),
\;\; z\in\bC_-.
\end{gather*}
Then for each collection $\pair\in\RH$ the equality
\begin {equation}\label{5.18}
\Om_{\tau,W}(\l)=\Om_{\tau_0,W}(\l)-S_{W,+}(\l)(\tau_+(\l)+M_+(\l))^{-1}
S_{W,-}^*(\ov\l), \qquad \l\in \bC_+
\end{equation}
together with \eqref{5.9} defines a (unique) spectral function $\S_ {\tau,W}
(\cd)$ of the boundary problem \eqref{3.27}-\eqref{3.29} corresponding to the
solution $Y_W(\cd,\l)$. Moreover a spectral function $\S_ {\tau,W} (\cd)$ is
orthogonal if and only if $\tau\in\RZ$.
\end{theorem}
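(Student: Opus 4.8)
The plan is to reduce the whole assertion to the single operator identity \eqref{5.18} and then invoke Theorem~\ref{th5.7} verbatim. By \eqref{5.9a} the function entering the Stieltjes formula is $\Om_{\tau,W}(\l)=W^{-1}\Om_\tau(\l)W^{-1*}$, and by the definitions preceding the theorem $\Om_{\tau_0,W}=W^{-1}\Om_{\tau_0}W^{-1*}$, $S_{W,\pm}=W^{-1}S_\pm$, so that $S_{W,-}^*(\ov\l)=S_-^*(\ov\l)W^{-1*}$. Hence \eqref{5.18} is obtained by applying $W^{-1}(\cd)\,W^{-1*}$ to the unconjugated identity
\begin{equation*}
\Om_\tau(\l)=\Om_{\tau_0}(\l)-S_+(\l)(\tau_+(\l)+M_+(\l))^{-1}S_-^*(\ov\l),\qquad \l\in\bC_+ .
\end{equation*}
Once this is proved, $\Om_{\tau,W}$ in \eqref{5.9} is exactly the function of Theorem~\ref{th5.7}, so that \eqref{5.9} together with \eqref{5.18} defines the unique spectral function $\S_{\tau,W}(\cd)$, orthogonal if and only if $\tau\in\RZ$; both statements are then literally the conclusions of Theorem~\ref{th5.7}.

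To prove the unconjugated identity I would start from $\Om_\tau(\l)=P_{\HH}\wt\Om_{\tau+}(\l)\up\HH$ of Theorem~\ref{th3.5} and compute the four compressed blocks. Let $j_0\colon H^n\to\cH_0$ and $j_1\colon H^n\to\cH_1$ be the isometric embeddings onto the first summand $H^n$ of $\cH_0=H^n\oplus\cH_0'$ and $\cH_1=H^n\oplus\cH_1'$, so $j_k^*j_k=I_{H^n}$ and $j_k^*$ is the orthoprojection onto $H^n$. Since the domain of $\wt\Om_{\tau+}$ is $\cH_0\oplus\cH_1$ and its codomain is $\cH_1\oplus\cH_0$, the embedding of $\HH$ into the domain is $j_0\oplus j_1$ and into the codomain is $j_1\oplus j_0$, whence $\Om_\tau=(j_1\oplus j_0)^*\wt\Om_{\tau+}(j_0\oplus j_1)$. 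Writing $T(\l)=(\tau_+(\l)+M_+(\l))^{-1}\in[\cH_1,\cH_0]$ and substituting \eqref{3.19.1}--\eqref{3.19.4}, the $(1,1),(1,2),(2,1),(2,2)$ blocks of $\Om_\tau$ become, respectively,
\begin{gather*}
m-j_1^*M_+TM_+j_0,\qquad -\tfrac12 I_{H^n}+j_1^*M_+Tj_1,\\
-\tfrac12 I_{H^n}+j_0^*TM_+j_0,\qquad -j_0^*Tj_1,
\end{gather*}
where I used $j_1^*M_+j_0=m(\l)$ (the $(1,1)$-block of \eqref{3.21}) and $j_k^*j_k=I_{H^n}$.

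It remains to match these with the blocks of $\Om_{\tau_0}-S_+TS_-^*$. The symmetry $M_+^*(\l)=M_-(\ov\l)$ gives in block form $m^*(\ov\l)=m(\l)$ and $M_{2-}^*(\ov\l)=M_{3+}(\l)$, so that \eqref{5.16}, \eqref{5.17} can be rewritten as $S_+(\l)=(-j_1^*M_+(\l)\;\;\;j_0^*)^\top$ and $S_-^*(\ov\l)=(-M_+(\l)j_0\;\;\;j_1)$. Multiplying out $S_+(\l)T(\l)S_-^*(\ov\l)$ produces the $2\times2$ block operator with entries $j_1^*M_+TM_+j_0$, $-j_1^*M_+Tj_1$, $-j_0^*TM_+j_0$, $j_0^*Tj_1$; subtracting it from $\Om_{\tau_0}$ of \eqref{5.15} reproduces exactly the four blocks displayed above. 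This establishes the unconjugated identity, hence \eqref{5.18}, and the theorem follows.

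The steps are elementary, so the only real care is bookkeeping: one must keep track of which $H^n$-summand of $\cH_0\oplus\cH_1$ and of $\cH_1\oplus\cH_0$ each block acts between --- this is what produces the asymmetric pattern $j_0\oplus j_1$ versus $j_1\oplus j_0$ in the two compressions --- and one must use $M_+^*(\l)=M_-(\ov\l)$ to replace the $M_{2-}$-entry of $S_-^*(\ov\l)$ by the off-diagonal block $M_{3+}$ of $M_+$. The analytic input (invertibility of $\tau_+(\l)+M_+(\l)$ for $\tau\in\RH$ and the existence of the limits in \eqref{5.9}) is not reproved, being already supplied by Theorems~\ref{th3.5} and~\ref{th5.7}.
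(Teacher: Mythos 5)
Your proof is correct, and its skeleton is the same as the paper's: both reduce the theorem to the unconjugated identity
$\Om_\tau(\l)=\Om_{\tau_0}(\l)-S_+(\l)(\tau_+(\l)+M_+(\l))^{-1}S_-^*(\ov\l)$, $\l\in\bC_+$ (equation \eqref{5.18a} of the paper), conjugate it by $W^{-1}(\cd)W^{-1*}$ to obtain \eqref{5.18}, and then invoke Theorem \ref{th5.7} for existence, uniqueness and the orthogonality criterion. The difference lies in how that identity is justified: the paper simply cites \cite{Mog10} for \eqref{5.18a}, whereas you derive it within the paper's own framework, compressing the blocks \eqref{3.19.1}--\eqref{3.19.4} of $\wt\Om_{\tau+}$ through \eqref{3.24} and matching the result against the blocks of $\Om_{\tau_0}(\l)-S_+(\l)T(\l)S_-^*(\ov\l)$, where $T(\l)=(\tau_+(\l)+M_+(\l))^{-1}\in[\cH_1,\cH_0]$. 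I checked the bookkeeping: since $\wt\Om_{\tau+}(\l)$ acts from $\cH_0\oplus\cH_1$ to $\cH_1\oplus\cH_0$, the compression \eqref{3.24} is indeed $(j_1\oplus j_0)^*\wt\Om_{\tau+}(\l)(j_0\oplus j_1)$; the identities $j_1^*M_+(\l)j_0=m(\l)$ and $M_-^*(\ov\l)=M_+(\l)$ (equivalently $M_{2-}^*(\ov\l)=M_{3+}(\l)$) correctly convert \eqref{5.16}, \eqref{5.17} into $S_+(\l)=(-j_1^*M_+(\l)\;\;j_0^*)^\top$ and $S_-^*(\ov\l)=(-M_+(\l)j_0\;\;j_1)$; and the four resulting blocks agree exactly as you state (a useful consistency check is $\tau_{0+}=\{0\}\oplus\cH_1$, for which $T=0$ and the compression reproduces \eqref{5.15}). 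What your route buys is self-containedness: the only unproved inputs are those already packaged in Theorems \ref{th3.5} and \ref{th5.7}, whereas the paper's two-line proof rests on a formula imported wholesale from \cite{Mog10}; the price is the block computation the paper avoids by citation.
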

\begin{proof}
According to \cite{Mog10} for each collection $\pair\in\RH$ the corresponding
characteristic matrix $\Om_\tau(\cd)$ is given by
\begin {equation}\label{5.18a}
\Om_{\tau}(\l)=\Om_{\tau_0}(\l)-S_{+}(\l)(\tau_+(\l)+M_+(\l))^{-1}
S_{-}^*(\ov\l), \qquad \l\in \bC_+
\end{equation}
This and Theorem \ref{th5.7} yield the desired statement.
\end{proof}
\subsection{Minimal spectral functions} We start the subsection  with the
following lemma which is immediate from Theorem \ref{th5.7}.
\begin{lemma}\label{lem5.10}
Let $\S_{\tau,\f}:\bR\to [\wt\cK]$ be a spectral function of the boundary
problem \eqref{3.27}-\eqref{3.29}, corresponding to the  solution $\f(t,\l)(\in
[\wt\cK, H])$ of the equation \eqref{3.4} (see Definition \ref{def5.5}). Assume
also that $\wt H\supset \wt\cK, \; \wt\cK^\perp=\wt H\ominus\wt\cK$ and
$Y_W(\cd,\l)(\in [\wt H, H])$ is a solution of \eqref{3.4} satisfying the
conditions of Theorem \ref{th5.7} and the equality
$Y_W(t,\l)\up\wt\cK=\f(t,\l)$ (such a solution exists because $0\in\hat\rho
(\wt\f (0,\l))$). Then the (unique)  spectral function  of the boundary problem
\eqref{3.27}-\eqref{3.29} corresponding to $Y_W(\cd,\l)$ is
\begin {equation}\label{5.19}
\S_{\tau,W}(s)=\begin{pmatrix} \S_{\tau,\f}(s) & 0 \cr 0 & 0
\end{pmatrix}:\wt\cK\oplus\wt\cK^\perp \to \wt\cK\oplus\wt\cK^\perp,
\end{equation}
which implies that the spectral function $\S_{\tau,\f}$ is unique.

Conversely if a spectral function $\S_{\tau,W}$ is of the form \eqref{5.19},
then $\S_{\tau,\f}(s)$ is a spectral function corresponding to $\f(\cd,\l)$.
\end{lemma}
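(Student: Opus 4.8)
The plan is to reduce everything to the defining identity \eqref{5.7} together with the uniqueness asserted in Theorem \ref{th5.7}, the only genuine input being the way the Fourier transform splits along the decomposition $\wt H=\wt\cK\oplus\wt\cK^\perp$. First I would record the block form of the enveloping solution. Since $Y_W(t,\l)\up\wt\cK=\f(t,\l)$, writing $Y_W(t,\l)=(\f(t,\l)\;\;\psi(t,\l))$ with $\psi(t,\l):=Y_W(t,\l)\up\wt\cK^\perp$ relative to $\wt H=\wt\cK\oplus\wt\cK^\perp$, one gets $Y_W^*(t,s)=(\f^*(t,s)\;\;\psi^*(t,s))^\top$. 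Consequently, for $f\in\gH_0$ the two Fourier transforms $g_f(s)=\int_0^b \f^*(t,s)f(t)\,dt\,(\in\wt\cK)$ and $G_f(s):=\int_0^b Y_W^*(t,s)f(t)\,dt\,(\in\wt H)$ are related by $P_{\wt\cK}G_f(s)=g_f(s)$; that is, the $\wt\cK$-component of the $Y_W$-transform is exactly the $\f$-transform.

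Next I would feed the block-diagonal distribution \eqref{5.19} into the right-hand side of \eqref{5.7} written for $Y_W$. Because $d\S_{\tau,W}(s)$ acts as $d\S_{\tau,\f}(s)$ on $\wt\cK$ and as $0$ on $\wt\cK^\perp$, the Riemann-Stieltjes sums \eqref{5.2} collapse onto the $\wt\cK$-component of $G_f$, giving
\begin{equation*}
\int_{[\a,\b)} (d\S_{\tau,W}(s)G_f(s),G_f(s))=\int_{[\a,\b)} (d\S_{\tau,\f}(s)g_f(s),g_f(s)),\qquad [\a,\b)\subset\bR.
\end{equation*}
For the direct statement 1) I am given that $\S_{\tau,\f}$ satisfies \eqref{5.7} with $\f$, so the right-hand side equals $((F_\tau(\b)-F_\tau(\a))f,f)_\gH$; hence the matrix \eqref{5.19} satisfies \eqref{5.7} with $Y_W$ and is therefore a spectral function corresponding to $Y_W$. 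By the uniqueness part of Theorem \ref{th5.7} it must coincide with $\S_{\tau,W}$, which proves \eqref{5.19}. Uniqueness of $\S_{\tau,\f}$ then follows at once: any two spectral functions for $\f$ produce, via the same recipe, two spectral functions for $Y_W$, which Theorem \ref{th5.7} forces to be equal, and equality of the upper-left blocks gives equality of the original distributions.

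For the converse 2) I would read the displayed identity backwards. If the (unique) spectral function $\S_{\tau,W}$ already has the form \eqref{5.19}, then \eqref{5.7} for $Y_W$ combined with the displayed collapse yields $((F_\tau(\b)-F_\tau(\a))f,f)_\gH=\int_{[\a,\b)}(d\S_{\tau,\f}(s)g_f(s),g_f(s))$ for all $f\in\gH_0$, which is precisely the defining property of a spectral function corresponding to $\f$. The one point requiring care — and the place I expect the main (if minor) obstacle — is the collapse of the integral: one must check that the block-diagonal structure of $\S_{\tau,W}$ passes through the Riemann-Stieltjes limit in \eqref{5.2}, so that the $\wt\cK^\perp$-components of $G_f$ and the off-diagonal zero blocks contribute nothing. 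This is routine given the finite sums defining \eqref{5.1}-\eqref{5.2} and the holomorphy of $s\mapsto G_f(s)$, but it is the only step that is more than formal bookkeeping.
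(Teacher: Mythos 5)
Your proof is correct and follows exactly the route the paper intends: the paper gives no written proof, asserting only that the lemma "is immediate from Theorem \ref{th5.7}", and your argument — block decomposition $Y_W=(\f\;\;\psi)$, term-by-term collapse of the Riemann–Stieltjes sums onto the $\wt\cK$-component, and then the existence/uniqueness part of Theorem \ref{th5.7} — is precisely the bookkeeping behind that assertion. Nothing is missing; the one step you flag (passing the block-diagonal structure through the limit in \eqref{5.2}) is indeed routine since the sums agree term by term.
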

Now combining Theorems \ref{th5.7}, \ref{th5.8} with Lemma \ref{lem5.10} and
taking the equality \eqref{4.50} into account one derives the following
theorem.
\begin{theorem}\label{th5.11}
Let $N=(N_0\;\;N_1)$ be an admissible operator pair \eqref{4.1} and let
$\f_N(t,\l)(\in [\hat\cK,H])$ be the operator solution of the equation
\eqref{3.4} with the initial data \eqref{4.34}. Then: 1) for each collection
$\cP=\CD\in\TR$ of holomorphic pairs \eqref{4.2}-\eqref{4.5} there exists a
unique spectral function $\S_{\cP,N}:\bR\to [\hat\cK]$ of the boundary problem
\eqref{4.16}-\eqref{4.20} corresponding to $\f_N(\cd,\l)$. This function is
given by
\begin {equation}\label{5.20}
\S_{\cP, N}(s)=s-\lim_{\d\to+0} w-\lim_{\e\to +0}\frac 1 \pi\int_{-\d}^{s-\d}
Im \;m_\cP(\s+ i\,\e)\,d\s,
\end{equation}
where $m_\cP(\l)$ is the $m$-function corresponding to the boundary problem
\eqref{4.16}-\eqref{4.20}. Moreover, the spectral function $\S_{\cP, N}$ is
orthogonal if and only if $\cP\in\TRZ$.

2) let $\S_{\cP, N}(\cd)$ be a spectral function and let $V:\gH\to L_2(\S_{\cP,
N};\hat\cK)$ be an isometry given by the Fourier transform \eqref{5.6} with
$\f(t,s)=\f_N(t,s)$. Then
\begin {equation*}
(V^*g)(t)=\int_\bR \f_N(t,s)\, d\S_{\cP,N}(s)g(s), \quad g=g(s)\in Hol^0(\hat
\cK).
\end{equation*}
In particular  the inverse Fourier transform is
\begin {equation*}
f(t)=\int_\bR \f_N(t,s)\, d\S_{\cP,N}(s)g_f(s).
\end{equation*}
\end{theorem}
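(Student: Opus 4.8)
The plan is to reduce the theorem to Theorems~\ref{th5.7} and~\ref{th5.8}, which already treat a $2n\cd\dim H$-dimensional solution $Y_W$, by choosing $W$ to be the isomorphism $W'$ of~\eqref{4.42} and then cutting down to the $\hat\cK$-block by means of Lemma~\ref{lem5.10}. Concretely, I would set $\wt H:=\hat\cK\oplus\hat\cK^\perp$ and take $W=W'$; since $W'$ maps $\wt H$ isomorphically onto $\HH$ we have $\dim\wt H=2n\cd\dim H$, and by~\eqref{4.43} the associated solution is $Y_{W'}(t,\l)=(\f_N(t,\l)\;\;\f_T(t,\l))$, so that $Y_{W'}(\cd,\l)\up\hat\cK=\f_N(\cd,\l)$. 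Moreover, admissibility of $N$ gives $\Ker N^*=\{0\}$ with $\cR(N^*)$ closed, and the initial data~\eqref{4.34} is $\wt\f_N(0,\l)=(-N_0^*\;\;N_1^*)^\top$, which differs from $N^*$ only by an isomorphism acting on the left; hence $0\in\hat\rho(\wt\f_N(0,\l))$. Thus the hypotheses of both Theorem~\ref{th5.7} and Lemma~\ref{lem5.10} are met, and Theorem~\ref{th5.7} produces the unique spectral function $\S_{\tau,W'}$ via the Stieltjes formula~\eqref{5.9} applied to $\Om_{\tau,W'}(\l)=(W')^{-1}\Om_\tau(\l)(W')^{-1*}$ of~\eqref{5.9a}, which coincides with~\eqref{4.48}.

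The key step is to read off the block structure of $\S_{\tau,W'}$. By Theorem~\ref{th4.11}, part~2), the function $\Om_{\tau,W'}(\cd)$ has the form~\eqref{4.50}, so its only non-constant entry is the upper-left block $m_\cP(\l)$: the off-diagonal blocks $-\tfrac12\cJ_2^*$ and $-\tfrac12\cJ_2$ and the lower-right block $0$ are all $\l$-independent, since $\cJ_{W'}$ of~\eqref{4.47} does not depend on $\l$. Computing $\Om_{\tau,W'}(\l)-\Om_{\tau,W'}^*(\l)$ and using $(\cJ_2^*)^*=\cJ_2$, the constant entries cancel, so $Im\,\Om_{\tau,W'}(\l)=\mathrm{diag}(Im\,m_\cP(\l),0)$; feeding this into~\eqref{5.9} yields
\begin{equation*}
\S_{\tau,W'}(s)=\begin{pmatrix}\S_{\cP,N}(s) & 0\cr 0 & 0\end{pmatrix}:\hat\cK\oplus\hat\cK^\perp\to\hat\cK\oplus\hat\cK^\perp,
\end{equation*}
where $\S_{\cP,N}$ is given by~\eqref{5.20} for $m_\cP(\cd)$. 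This is exactly the block form~\eqref{5.19}, so the converse part of Lemma~\ref{lem5.10} identifies $\S_{\cP,N}$ as the unique spectral function of~\eqref{4.16}--\eqref{4.20} corresponding to $\f_N(\cd,\l)$, proving the existence and uniqueness assertions of part~1).

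For orthogonality I would compare the two Fourier pictures. Writing $Y_{W'}^*=(\f_N^*\;\;\f_T^*)^\top$, the $W'$-transform of $f\in\gH_0$ has $\hat\cK$-component equal to the $\f_N$-transform $g_f$, while the block form of $\S_{\tau,W'}$ shows that the seminorm of $L_2(\S_{\tau,W'};\wt H)$ ignores the $\hat\cK^\perp$-component; hence $L_2(\S_{\tau,W'};\wt H)$ is naturally isometric to $L_2(\S_{\cP,N};\hat\cK)$, and under this identification the two families of Fourier transforms correspond. Consequently $\S_{\tau,W'}$ is orthogonal iff $\S_{\cP,N}$ is, and by Theorem~\ref{th5.7} this holds iff $\tau\in\RZ$; finally $\tau\in\RZ\iff\cP\in\TRZ$ by Corollary~\ref{cor4.7} together with Theorem~\ref{th3.5} (both amount to $R_\cP=R_\tau$ being canonical). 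Part~2) then follows by applying Theorem~\ref{th5.8} to $\S_{\tau,W'}$ and restricting the inverse-transform formula to $g\in Hol^0(\hat\cK)\subset Hol^0(\wt H)$: since $\S_{\tau,W'}$ is block-diagonal and $Y_{W'}(t,s)(h\;\;0)^\top=\f_N(t,s)h$ for $h\in\hat\cK$, the integral $\int_\bR Y_{W'}(t,s)\,d\S_{\tau,W'}(s)g(s)$ collapses to $\int_\bR\f_N(t,s)\,d\S_{\cP,N}(s)g(s)$, which is the asserted formula.

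The hard part will be the block computation of the second paragraph: everything rests on the fact that in~\eqref{4.50} all entries other than $m_\cP(\l)$ are $\l$-independent, so that they drop out under $Im$ and the Stieltjes integration~\eqref{5.9}, leaving the clean block-diagonal form~\eqref{5.19} needed to invoke Lemma~\ref{lem5.10}. A secondary point requiring care is the orthogonality equivalence, for which one must use the explicit isometric identification of $L_2(\S_{\tau,W'};\wt H)$ with $L_2(\S_{\cP,N};\hat\cK)$ described above rather than a merely formal comparison.
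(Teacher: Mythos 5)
Your proposal is correct and follows essentially the same route as the paper, whose proof consists precisely of combining Theorems \ref{th5.7} and \ref{th5.8} with Lemma \ref{lem5.10} and the block equality \eqref{4.50}: you take $W=W'$, observe via \eqref{4.50} that $Im\,\Om_{\tau,W'}(\l)=\mathrm{diag}(Im\,m_\cP(\l),0)$ so that \eqref{5.9} produces the block form \eqref{5.19}, and then invoke Lemma \ref{lem5.10}. The details you supply (the check $0\in\hat\rho(\wt\f_N(0,\l))$, the isometric identification of $L_2(\S_{\tau,W'};\wt H)$ with $L_2(\S_{\cP,N};\hat\cK)$ for the orthogonality equivalence, and the collapse of the inverse transform) are exactly the steps the paper leaves implicit.
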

In the next theorem we give a parameterization of all spectral functions
$\S_{\cP, N} (\cd)$ in terms of a boundary parameter $\cP\in \TR$.
\begin{theorem}\label{th5.12}
Let the assumptions of Theorem \ref{th5.11} be satisfied , let $\hat N$ be the
operator \eqref{4.58a} and let $T_{N,0}:\CR\to[\hat\cK], \;T_{N,+}:\bC_+\to
[\cH_0,\hat\cK]$ and  $T_{N,_-}:\bC_-\to [\cH_1,\hat\cK]$ be the operator
functions defined by
\begin {gather*}
T_{N,0}(\l)=\hat N^*\Om_{\tau_0}(\l)\hat N, \;\;\l\in\CR; \\
T_{N,+}(\l)=\hat N^*S_+(\l), \;\; \l\in\bC_+; \;\;\;\; T_{N,-}(z)=\hat
N^*S_-(z) , \;\; z\in\bC_-.
\end{gather*}
Then for each collection $\cP\in\TR$ given by \eqref{4.2}-\eqref{4.5} the
equality
\begin {equation}\label{5.21}
m_\cP(\l)=T_{N,0}(\l)+T_{N,+}(\l)(C_0(\l)-C_1(\l)M_+(\l))^{-1}C_1(\l)T_{N,-}^*(\ov\l),
\quad \l\in\bC_+
\end{equation}
together with \eqref{5.20} defines a (unique) spectral function $\S_{\cP, N}
(\cd)$ of the boundary problem \eqref{4.16}-\eqref{4.20} corresponding to
$\f_N$. Moreover a spectral function $\S_{\cP, N} (\cd)$ is orthogonal if and
only if $\cP\in\TRZ$.
\end{theorem}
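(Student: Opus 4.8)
The plan is to obtain \eqref{5.21} directly from the already-established formula \eqref{4.66a} for the $m$-function together with the representation \eqref{5.18a} of the characteristic matrix, and then to read off the spectral function and its orthogonality from Theorem \ref{th5.11}. Indeed, by Proposition \ref{pr4.14} one has $m_\cP(\l)=\hat N^*\Om_\tau(\l)\hat N+\hat C$ with $\hat C=\hat C^*$, so it suffices to compute $\hat N^*\Om_\tau(\l)\hat N$ by means of the description of $\Om_\tau(\cd)$ furnished by Theorem \ref{th5.9}.

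First I would substitute \eqref{5.18a} into $\hat N^*\Om_\tau(\l)\hat N$ and distribute the factors $\hat N^*(\cd)\hat N$ over the two summands. Recalling the definitions $T_{N,0}(\l)=\hat N^*\Om_{\tau_0}(\l)\hat N$, $T_{N,+}(\l)=\hat N^*S_+(\l)$, $T_{N,-}(z)=\hat N^*S_-(z)$, and using $S_-^*(\ov\l)\hat N=T_{N,-}^*(\ov\l)$, this yields
$$\hat N^*\Om_\tau(\l)\hat N=T_{N,0}(\l)-T_{N,+}(\l)(\tau_+(\l)+M_+(\l))^{-1}T_{N,-}^*(\ov\l),\quad \l\in\bC_+.$$
The one nonroutine ingredient is the elementary resolvent-type identity for the Nevanlinna relation $\tau_+(\l)=\{(C_0(\l),C_1(\l));\cK_0\}$, namely
$$(\tau_+(\l)+M_+(\l))^{-1}=-(C_0(\l)-C_1(\l)M_+(\l))^{-1}C_1(\l),$$
which I would verify by a direct computation: if $k=h_1+M_+(\l)h_0$ with $\{h_0,h_1\}\in\tau_+(\l)$, then $C_0(\l)h_0+C_1(\l)h_1=0$ forces $(C_0(\l)-C_1(\l)M_+(\l))h_0=-C_1(\l)k$, while single-valuedness is guaranteed by $0\in\rho(C_0(\l)-C_1(\l)M_+(\l))$ (established in the proof of Proposition \ref{pr4.8}). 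Inserting this identity reproduces precisely the right-hand side of \eqref{5.21}, so that the latter equals $\hat N^*\Om_\tau(\l)\hat N=m_\cP(\l)-\hat C$.

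Hence the right-hand side of \eqref{5.21} is a realization of the $m$-function differing from $m_\cP(\cd)$ only by the selfadjoint constant $\hat C$ (cf.\ Remark \ref{rem4.13}). Since the Stieltjes formula \eqref{5.20} depends only on $Im\,m_\cP(\s+i\e)$ and $Im\,\hat C=0$, formula \eqref{5.20} applied to the right-hand side of \eqref{5.21} returns the same distribution as when applied to $m_\cP(\cd)$ itself, which by Theorem \ref{th5.11} is the unique spectral function $\S_{\cP,N}(\cd)$ corresponding to $\f_N$. The orthogonality assertion then transfers verbatim from Theorem \ref{th5.11}: the function $\S_{\cP,N}(\cd)$ is orthogonal exactly when $\cP\in\TRZ$. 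The only point that needs care is the bookkeeping of the selfadjoint constant, i.e.\ confirming that it is genuinely annihilated by the imaginary part in \eqref{5.20}; once the resolvent identity above is in hand, all remaining manipulations are routine.
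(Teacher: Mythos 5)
Your proposal is correct and follows essentially the same route as the paper's own proof: substitute the representation \eqref{5.18a} of $\Om_\tau(\cd)$ into \eqref{4.66a}, apply the resolvent-type identity $-(\tau_+(\l)+M_+(\l))^{-1}=(C_0(\l)-C_1(\l)M_+(\l))^{-1}C_1(\l)$, and then invoke Theorem \ref{th5.11}. Your explicit verification of that identity and your bookkeeping of the selfadjoint constant $\hat C$ (harmless in \eqref{5.20}, cf.\ Remark \ref{rem4.13}) are details the paper leaves implicit, not a different argument.
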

\begin{proof}
Let  $\cP\in\TR$ be defined by \eqref{4.2}-\eqref{4.5} and let
$\tau_+(\l)(\in\CA)$ be the corresponding linear relation \eqref{2.14}. Then by
\eqref{5.18a}, \eqref{4.66a} and the equality
\begin {equation*}
-(\tau_+(\l)+M_+(\l))^{-1}=(C_0(\l)-C_1(\l)M_+(\l))^{-1}C_1(\l), \quad
\l\in\bC_+
\end{equation*}
 the $m$-function $m_\cP(\l)$ can be represented via \eqref{5.21}. This
 together with Theorem \ref{th5.11} yield the required statement.
\end{proof}
Next for a given collection $\pair\in\RH$ defined by \eqref{2.14} and
\eqref{3.25}, \eqref{3.26} consider  the corresponding boundary problem
\eqref{3.27}-\eqref{3.29}. Denote by $d_{min}$ the minimal value of $\dim
\wt\cK$ for the set of all spectral functions $\S_{\tau,\f}:\bR\to [\wt\cK]$ of
this boundary problem (recall that according to Definition \ref{def5.5} each
$\S_{\tau,\f}$ corresponds to some operator solution $\f (t,\l)(\in
[\wt\cK,H])$ of the equation \eqref{3.4}).
\begin{definition}\label{def5.13}
A spectral function $\S(\cd)=\S_{\tau,\f}(\cd):\bR\to [\wt\cK]$ will be called
minimal if $\dim \wt\cK=d_{min}$.
\end{definition}
In the following theorem we give  a  description of all minimal spectral
functions of the "triangular" boundary problem \eqref{4.16}-\eqref{4.20}.
\begin{theorem}\label{th5.14}
Let $\Pi=\bta$ be a decomposing $D$-triplet \eqref{3.10} for $L$, let
$\cP=\CD\in\TR$ be a collection of holomorphic pairs \eqref{4.2}-\eqref{4.5}
and let \eqref{4.16}-\eqref{4.20} be the corresponding boundary problem. Then:

1) $d_{min}=\dim \hat\cK$ and the set of all minimal spectral functions
$\S_{min}(\cd)$ is given by
\begin {equation}\label{5.22}
 \S_{min}(s)=X^* \S_{\cP,N}(s)X,
\end{equation}
where $\S_{\cP,N}(s)$ is the (minimal) spectral function defined in Theorem
\ref{th5.11} and $X$ is an automorphism of the space $\hat\cK$. Moreover, the
minimal spectral function $\S_{min}(s)$ given by \eqref{5.22} corresponds to
the operator solution $\f_{min}(t,\l):=\f_N(t,\l)X^{-1*}$ of the equation
\eqref{3.4}.

2) if $\dim H=\infty$, then $d_{min}(=\dim\hat\cK)=\infty$.
\end{theorem}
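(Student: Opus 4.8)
The plan is to reduce the minimality statement to the analysis already carried out for the characteristic matrix $\Om_\tau(\cd)$ and the $m$-function $m_\cP(\cd)$, exploiting Lemma \ref{lem5.10} to compare spectral functions of different dimensions. First I would establish that $\dim\hat\cK$ is an upper bound for $d_{min}$: by Theorem \ref{th5.11}, 1) the function $\S_{\cP,N}(\cd)$ is a genuine $(\hat n\times\hat n)$-spectral function of the problem \eqref{4.16}-\eqref{4.20} corresponding to $\f_N(\cd,\l)$, so $d_{min}\leq\dim\hat\cK$. The substance of part 1) is therefore the reverse inequality $d_{min}\geq\dim\hat\cK$, i.e.\ that no spectral function can live on a space strictly smaller than $\hat\cK$. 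For this I would take an arbitrary spectral function $\S_{\tau,\f}:\bR\to[\wt\cK]$ corresponding to some solution $\f(\cd,\l)(\in[\wt\cK,H])$ with $0\in\hat\rho(\wt\f_0)$, and embed it via Lemma \ref{lem5.10} into a full $(2n\dim H)$-dimensional spectral function $\S_{\tau,W}(\cd)$ of the form \eqref{5.19}, where $Y_W(\cd,\l)\up\wt\cK=\f(\cd,\l)$.

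The key step is to compare the two representations of $\S_{\tau,W}$. On one hand $\S_{\tau,W}$ is obtained by the Stieltjes formula \eqref{5.9} from $\Om_{\tau,W}(\l)=W^{-1}\Om_\tau(\l)W^{-1*}$. On the other hand, choosing $W=W'$ to be the isomorphism \eqref{4.42}, Theorem \ref{th4.11}, 2) gives the block form \eqref{4.50}, so that the associated spectral function splits as $\S_{\tau,W'}=\begin{pmatrix}\S_{\cP,N} & 0\cr 0 & 0\end{pmatrix}$ on $\hat\cK\oplus\hat\cK^\perp$ (the off-diagonal and lower-right blocks of \eqref{4.50} are constant, hence contribute nothing to the imaginary part in \eqref{5.9}). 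Thus the ``essential rank'' of the full spectral function, measured through its non-constant part, is exactly $\dim\hat\cK$. Since passing from one solution $\f$ to another with the same span changes $\S_{\tau,W}$ only by a constant congruence $W^{-1}\mapsto X W^{-1}$ with invertible $X$ (the freedom in Theorem \ref{th5.7} is the choice of $W$), the dimension of the range of $d\,\S_{\tau,W}$ is invariant and equals $\dim\hat\cK$. Consequently any $\S_{\tau,\f}$ embedded as in \eqref{5.19} must already carry this full non-constant part, forcing $\dim\wt\cK\geq\dim\hat\cK$ and hence $d_{min}=\dim\hat\cK$. To exhibit the parameterization \eqref{5.22}, I would note that if $\S_{min}(\cd):\bR\to[\wt\cK]$ is any minimal ($\dim\wt\cK=\hat n$) spectral function corresponding to $\f_{min}(\cd,\l)$, then writing $\f_{min}(\cd,\l)=\f_N(\cd,\l)X^{-1*}$ with $X$ invertible (possible since both solutions have full-rank constant initial data spanning the same $\hat\cK$-worth of solutions), the Parseval identity \eqref{5.7} transforms by $g_{f}^{min}(s)=X^{-1*}g_f^{N}(s)$, which yields $\S_{min}(s)=X^*\S_{\cP,N}(s)X$ directly from the defining equality; uniqueness of $\S_{\cP,N}$ from Theorem \ref{th5.11} closes the argument. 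Part 2) is then immediate: if $\dim H=\infty$ then by \eqref{4.10} one has $\dim\hat\cK\geq n\dim H=\infty$, so $d_{min}=\infty$.

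The main obstacle I anticipate is the invariance argument in the key step: one must argue carefully that the dimension of the support of the measure $d\,\S_{\tau,W}$ is genuinely an invariant of the boundary problem, independent of the auxiliary solution $\f$ used to define it, and that the constant blocks in \eqref{4.50} really drop out of \eqref{5.9}. This requires knowing that the Stieltjes inversion \eqref{5.9} annihilates constant (and, more delicately, the skew off-diagonal) summands of the Nevanlinna function, so that $\operatorname{rank} d\,\S_{\tau,W'}$ is controlled precisely by the uniformly strict block $m_\cP(\cd)$. Establishing that the off-diagonal constant $-\tfrac12\cJ_2^*$ contributes nothing—because $\Om_{\tau,W'}(\l)-\Om_{\tau,W'}^*(\l)$ involves only $m_\cP(\l)-m_\cP^*(\l)$—and then transferring this through the embedding \eqref{5.19} to a general $\wt\cK$ is where the real care lies; everything else is bookkeeping with the isometry $V$ and the already-proven formulas of Theorem \ref{th4.11} and Proposition \ref{pr4.14}.
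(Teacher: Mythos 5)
Your outline follows the paper's own route quite closely (embedding an arbitrary $\S_{\tau,\f}$ into a full-size $\S_{\tau,W}$ via Lemma \ref{lem5.10}, passing to the isomorphism $W'$ of \eqref{4.42} by a constant congruence, reading off the block structure \eqref{4.50}, recovering the parameterization \eqref{5.22} by uniqueness, and deducing part 2) from \eqref{4.10}), but its central step contains a genuine gap. You claim that the rank of $d\,\S_{\tau,W'}$, equivalently of $d\,\S_{\cP,N}$, equals $\dim\hat\cK$ because the constant blocks of \eqref{4.50} cancel in the imaginary part and the remaining block $m_\cP(\cd)$ is uniformly strict. That inference is false for general Nevanlinna functions: a uniformly strict Nevanlinna function can have identically vanishing Stieltjes inversion. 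The simplest example is $m(\l)=\l\, I_{\hat\cK}$, for which $Im\, m(\l)=(Im\,\l)I_{\hat\cK}$ is boundedly invertible for all $\l\in\CR$, yet formula \eqref{5.20} produces $\S\equiv 0$. Thus uniform strictness of $m_\cP$ (Proposition \ref{pr4.14}) plus cancellation of constants does not by itself exclude a nonzero $h\in\hat\cK$ with $\S_{\cP,N}(s)h\equiv 0$; if such an $h$ existed, your invariance argument would only yield $\dim\wt\cK\geq \text{rank}\, d\,\S_{\cP,N}$, which could be strictly smaller than $\dim\hat\cK$, and the lower bound $d_{min}\geq\dim\hat\cK$ would not follow.

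The missing ingredient is the growth condition \eqref{3.37} of Proposition \ref{pr3.6}, $s-\lim_{y\to\infty}\Om_\tau(iy)/y=0$, which via \eqref{4.66a} also gives $s-\lim_{y\to\infty}m_\cP(iy)/y=0$ and hence annihilates the linear term $B\l$ in the Nevanlinna integral representation of $\Om_{\tau,W}(\cd)$ (equivalently, of $m_\cP(\cd)$). Only after $B=0$ is known does one obtain the identification $\Ker\, Im\,\Om_{\tau,W}(\l)=\{\wt h\in\wt H:\ \S_{\tau,W}(s)\wt h=0,\ s\in\bR\}$ --- this is exactly the paper's equality \eqref{5.23} --- and then uniform strictness of $m_\cP$ gives $\Ker\, Im\,\Om_{\tau,W'}(\l)=\hat\cK^\perp$, after which your congruence argument closes the lower bound as intended. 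In short: the bookkeeping with \eqref{5.19}, the congruence invariance, the converse parameterization via $\f_{min}(t,\l)=\f_N(t,\l)X^{-1*}$, and part 2) all match the paper's proof; what must be added is the appeal to Proposition \ref{pr3.6} together with the integral representation of Nevanlinna functions, without which the key rank (kernel) computation fails.
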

\begin{proof}
1) Let $\S_{\tau,\f}:\bR\to [\wt\cK]$ be a spectral function of the problem
\eqref{4.16}-\eqref{4.20} corresponding to the solution $\f(t,\l)(\in
[\wt\cK,H])$ with $\wt\f(0,\l)\equiv \wt \f_0(\in [\wt\cK,H^{2n}])$. Since
$0\in\hat\rho (\wt \f_0)$, there are a Hilbert space $\wt\cK^\perp$ and an
operator $\wt\psi_0\in [\wt\cK^\perp, H^{2n}]$ such that the operator $W=(\wt
\f_0\;\; \wt\psi_0)$ is an isomorphism of the  space $\wt H:=\wt\cK\oplus
\wt\cK^\perp$ onto $H^{2n}$.

Let $\Om_{\tau,W}(\l)$ be the operator function \eqref{5.9a} and let
$\S_{\tau,W}(\cd)$ be the spectral function \eqref{5.9} corresponding to the
solution $Y_W(\cd,\l)$ (see Theorem \ref{th5.7}). It follows from \eqref{3.37}
that $s-\lim\limits_{y\to \infty}\Om_{\tau,W}(iy)/y=0$. This and the integral
representation of the Nevanlinna function $\Om_{\tau,W}(\l)$ \cite{Br,KacKre}
yield
\begin {equation}\label{5.23}
\Ker\, Im \,\Om_{\tau,W}(\l)=\{\wt h\in\wt H: \S_{\tau,W}(s)\wt h=0,
\;s\in\bR\}, \quad \l\in\CR.
\end{equation}
Moreover by Lemma \ref{lem5.10} the function $\S_{\tau,W}(s)$ satisfies
\eqref{5.19}, which in view of \eqref{5.23} gives the inclusion
$\wt\cK^\perp\subset \Ker\, Im \,\Om_{\tau,W}(\l), \; \l\in\CR$. Now, letting
$\wt H_0:=\wt H\ominus \Ker\, Im \,\Om_{\tau,W}(\l)$ one obtains $\dim \wt
H_0\leq \dim \wt\cK$.

Next assume that $W'\in [\hat\cK\oplus \hat\cK^\perp, H^{2n}]$ is the
isomorphism \eqref{4.42} and $\Om_{\tau,W'}(\l)$ is the operator function
\eqref{4.48}.  It follows from \eqref{5.9a} that there exists an isomorphism
$C\in [\hat\cK\oplus \hat\cK^\perp, \wt H]$ such that $\Om_{\tau,W'}(\l)=C^*
\Om_{\tau,W}(\l)C$. Moreover, by the block representation \eqref{4.50} one has
$\Ker\, Im \,\Om_{\tau,W'}(\l)=\hat\cK^\perp$.  Hence $\Ker\, Im \,\Om_{\tau,W}
(\l)=C \hat\cK^\perp$ and consequently $\hat\cK=C^*\wt H_0$. Therefore $\dim
\hat\cK=\dim \wt H_0\leq \dim \wt\cK$, which yields the equality $d_{min}=\dim
\hat\cK$.

To prove the relation \eqref{5.22} note that for each automorphism $X\in
[\hat\cK]$ this relation defines the minimal spectral function $\S_{min}(s)=
\S_{\tau,\f_{min}}(s)$, corresponding to the solution $\f_{min}(t,\l):
=\f_N(t,\l)X^{-1*}$. Conversely, let $\S_{min}(s)=\S_{\tau,\f_{min}}(s)$ be a
minimal spectral function corresponding to the solution $\f_{min}(t,\l)(\in
[\hat\cK, H])$. Since $0\in \hat\rho (\wt\f (0,\l))\cap \hat\rho (\wt\f_N
(0,\l))$, there exists an automorphism $X\in [\hat\cK]$ such that $\f_{min}
(t,\l)=\f_N(t,\l)X^{-1*}$ and hence the distribution $\S(s):= X^*
\S_{\cP,N}(s)X$ is a spectral function corresponding to $\f_{min}$. Since by
Lemma \ref{lem5.10} such a function is unique, it follows that
$\S_{min}(s)=\S(s)= X^* \S_{\cP,N}(s)X$.

The statement 2) is implied by the statement 1) and the inequality \eqref{4.10}
\end{proof}
Finally by using the above results we can estimate the spectral multiplicity of
an exit space extension $\wt A\supset L_0$. Namely, the following corollary is
valid.
\begin{corollary}\label{cor5.15}
Let the assumptions of Theorem \ref{th5.14} be satisfied and let
$R_\cP(\l)=P_\gH (\wt A-\l)^{-1}\up\gH$ be a generalized resolvent generated by
the boundary problem \eqref{4.16}-\eqref{4.20}. Then the spectral multiplicity
of the  extension $\wt A$ does nod exceed  $d_{min}(=\dim \hat\cK)$.
\end{corollary}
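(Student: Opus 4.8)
The plan is to exhibit $\wt A$ as unitarily equivalent to a part of the operator of multiplication by the independent variable in the model space $L_2(\S_{\cP,N};\hat\cK)$, and then to invoke the standard bound on the spectral multiplicity of such a multiplication operator. Recall that by Theorem \ref{th5.14} one has $d_{min}=\dim\hat\cK$, that $\S_{\cP,N}(\cd)$ is the spectral function of Theorem \ref{th5.11}, and that the Fourier transform $V:\gH\to L_2(\S_{\cP,N};\hat\cK)$, $(Vf)(s)=g_f(s)=\int_0^b\f_N^*(t,s)f(t)\,dt$, is an isometry. Write $\wt E(\cd)$ for the orthogonal spectral function of $\wt A$, so that $F_\tau(t)=P_\gH\wt E(t)\up\gH$, and recall that minimality of $\wt A$ means that the linear span of $\{\wt E(\d)f:f\in\gH,\ \d\subset\bR\ \text{Borel}\}$ is dense in $\wt\gH$.

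First I would define a map $\wt V$ on this dense span by setting $\wt V(\wt E(\d)f):=\chi_\d\cd(Vf)$ for $f\in\gH$ and bounded Borel $\d$, where $\chi_\d$ denotes multiplication by the indicator of $\d$ in $L_2(\S_{\cP,N};\hat\cK)$. The key computation is the verification that $\wt V$ preserves inner products. For $f_1,f_2\in\gH$ and bounded Borel sets $\d_1,\d_2$ one has, using $\wt E(\d_1)\wt E(\d_2)=\wt E(\d_1\cap\d_2)$ and $f_2\in\gH$,
\begin{equation*}
(\wt E(\d_1)f_1,\wt E(\d_2)f_2)_{\wt\gH}=(P_\gH\wt E(\d_1\cap\d_2)f_1,f_2)_\gH,
\end{equation*}
and since $P_\gH\wt E(\cd)\up\gH=F_\tau(\cd)$, the polarized form of the defining identity \eqref{5.7} turns the right-hand side into $\int_{\d_1\cap\d_2}(d\S_{\cP,N}(s)g_{f_1}(s),g_{f_2}(s))$, which is precisely $(\chi_{\d_1}Vf_1,\chi_{\d_2}Vf_2)_{L_2(\S_{\cP,N};\hat\cK)}$. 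Hence $\wt V$ is well defined and isometric on the dense span, so it extends to an isometry $\wt V:\wt\gH\to L_2(\S_{\cP,N};\hat\cK)$; moreover by construction $\wt V\wt E(\d)=M_{\chi_\d}\wt V$ for every bounded Borel $\d$, where $M_{\chi_\d}$ is the corresponding multiplication projection.

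It then remains to read off the multiplicity bound. Since $\wt V$ intertwines the spectral projections $\wt E(\d)$ of $\wt A$ with the projections $M_{\chi_\d}$, the closed subspace $\cL:=\overline{\cR(\wt V)}$ is invariant under all $M_{\chi_\d}$ and hence reducing for the selfadjoint multiplication operator $M_s:g(s)\mapsto s\,g(s)$ in $L_2(\S_{\cP,N};\hat\cK)$, and $\wt V$ is a unitary from $\wt\gH$ onto $\cL$ carrying $\wt A$ into $M_s\up\cL$. Thus $sm(\wt A)=sm(M_s\up\cL)\le sm(M_s)$, and it is a standard fact (see the direct-integral description of $L_2(\S;\cH)$ in \cite{MalMal03,Ber}) that the multiplication operator $M_s$ on $L_2(\S_{\cP,N};\hat\cK)$ has spectral multiplicity at most $\dim\hat\cK$. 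Together with Theorem \ref{th5.14} this gives $sm(\wt A)\le\dim\hat\cK=d_{min}$.

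I expect the main obstacle to be the isometry step: one must justify the passage $(\cd,\cd)_{\wt\gH}\to(\cd,\cd)_\gH\to$ the $L_2(\S_{\cP,N};\hat\cK)$ inner product for all bounded Borel $\d_1,\d_2$, extending \eqref{5.7} from half-intervals by the usual measure-theoretic arguments and using that $g_f$ is holomorphic so that the Stieltjes integrals converge, and to check that the elements $\wt E(\d)f$ with bounded $\d$ indeed span a dense subset of $\wt\gH$. A secondary technical point is the cited multiplicity bound for $M_s$: although the natural generators (constant $\hat\cK$-valued functions) need not lie in $L_2(\S_{\cP,N};\hat\cK)$, this is circumvented by decomposing $\bR$ into bounded intervals, on which the functions $\chi_\d e_k$ (with $\{e_k\}$ an orthonormal basis of $\hat\cK$) do belong to the space and generate it.
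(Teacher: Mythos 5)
Your proposal is correct and follows essentially the same route as the paper: both realize the minimal orthogonal spectral function $\wt E(\cd)$ of $\wt A$ as unitarily equivalent to a part of the canonical (indicator-multiplication) spectral measure in $L_2(\S_{\cP,N};\hat\cK)$, and then bound the multiplicity of that model spectral measure by $\dim\hat\cK$. The only difference is presentational: where the paper invokes uniqueness of minimal orthogonal dilations to identify $\wt E(\cd)$ with the part $\wt\chi(\cd)$ of $\chi(\cd)$ on $\wt\cL=\overline{\text{span}}\{\cL,\chi(s)\cL\}$, you construct the intertwining isometry $\wt V$ explicitly via the polarized form of \eqref{5.7} --- which is exactly the proof of that uniqueness statement in this setting.
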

\begin{proof}
Let $\S=\S_{\cP,N}:\bR\to [\hat \cK]$ be a spectral function defined in Theorem
\ref{th5.11} and let $\chi' (s) $ be a bounded linear map in $Hol (\S,\hat\cK)$
given for all $s\in\bR$ by
\begin {equation*}
(\chi'(s)f)(\s)=\chi_{(-\infty,s)}(\s)f(\s), \quad f=f(\s)\in Hol (\S,\hat\cK)
\end{equation*}
(here $\chi_{(-\infty,s)}(\cd)$ is the indicator of the interval
$(-\infty,s)$). It is easily seen that the map $\chi'(s)$ admits the continuous
extension $\chi (s)\in [L_2(\S;\hat\cK)]\;$ $(s\in\bR) $ such that $\chi (\cd)$
is an orthogonal spectral function (resolution of identity) in
$L_2(\S;\hat\cK)$.

Next assume that $V\in [\gH, L_2(\S;\hat\cK)]$ is an isometry given by the
Fourier transform \eqref{5.6} with $\f=\f_N$ and  let $\cL:=V\gH,
\;\wt\cL=\text{span} \{\cL,\, \chi(s)\cL:s\in\bR\}$. As is known the subspace
$\wt\cL$ reduces the spectral function $\chi (s)$ and the equality $\wt\chi
(s)=\chi (s)\up \wt\cL$ defines the minimal orthogonal  spectral function
$\wt\chi (s)$ in $\wt\cL$ (actually one can prove that
$\wt\cL=L_2(\S;\hat\cK)$). Moreover, the relation \eqref{5.7} yields
\begin {equation}\label{5.25}
F_\cP(t)=V^*\,\chi (t)\,V=V^*(P_\cL \wt\chi (t)\up\cL)V, \quad t\in\bR,
\end{equation}
where $F_\cP(t)=P_\gH \wt E (t)\up\gH$ and $\wt E(t)$ is the orthogonal
spectral function of $\wt A$. It follows from \eqref{5.25} that the spectral
functions $F_\cP(t)$ and $P_\cL \wt\chi (t)\up\cL$ are unitary equivalent and,
consequently, so are the (minimal) orthogonal spectral functions $\wt E (t)$
and $\wt\chi(t)$. This and the fact that $\wt\chi (t)$ is a part of $\chi (t)$
imply that the spectral multiplicity of $\wt E(t)$ does not exceed  the
spectral multiplicity of $\chi (t)$, which in turn does not exceed
$\dim\hat\cK$. This proves the required statement.
\end{proof}
\begin{remark}\label{rem5.16}
It follows from Proposition \ref{pr4.4a} that in the case $n_{b+}<\infty$ (in
particular, $\dim H<\infty$) the statements of Theorem \ref{th5.14} and
Corollary \ref{cor5.15} can be naturally extended to the boundary problems
\eqref{3.27}-\eqref{3.29} generated by a quasi-constant Nevanlinna collection
$\CD$.
\end{remark}

\end{document}